\newtheorem{thm}{Theorem}[section]
\newtheorem{lem}[thm]{Lemma}
\theoremstyle{definition}
\newtheorem{prop}[thm]{Proposition}
\newtheorem{conj}[thm]{Conjecture}
\newtheorem{rmk}[thm]{Remark}
\newtheorem{prob}[thm]{Problem}
\DeclareMathOperator\Aut{Aut}
\DeclareMathOperator\sym{Sym}
\DeclareMathOperator\alt{Alt}
\DeclareMathOperator{\fix}{fix}
\DeclareMathOperator\Der{Der}
\DeclareMathOperator{\cay}{Cay}
\newcommand{\agl}[2]{\operatorname{AGL}_#1(#2)}
\newcommand{\pgl}[2]{\operatorname{PGL}_#1(#2)}
\newcommand{\psl}[2]{\operatorname{PSL}_#1(#2)}
\newcommand{\asl}[2]{\operatorname{ASL}_#1(#2)}
\newcommand{\agammal}[2]{\operatorname{A\Gamma L}_#1(#2)}
\newcommand{\pgammal}[2]{\operatorname{P\Gamma L}_#1(#2)}
\newcommand{\mathieu}[1]{\operatorname{M}_{#1}}
\begin{document}	

\title[]{On the intersection density of the symmetric group acting on uniform subsets of small size}

{\author[A. Behajaina]{Angelot Behajaina\textsuperscript{1}}
	\thanks{\textsuperscript{1}Universit\'e Paris-Saclay, CNRS, Laboratoire de Math\'ematiques d'Orsay, 91405 Orsay, France}
	\email{angelot.behajaina@universite-paris-saclay.fr}}
{\author[R. Maleki]{Roghayeh Maleki\textsuperscript{2}}
	\thanks{\textsuperscript{2} Department of Mathematics and Statistics, University of Regina, Regina, Saskatchewan S4S 0A2, Canada}
	\email{rmaleki@uregina.ca}}

\author[A. S. Razafimahatratra]{Andriaherimanana Sarobidy Razafimahatratra\textsuperscript{2,*}}
\thanks{\textsuperscript{*} Corresponding author}\email{sarobidy@phystech.edu}

\begin{abstract}
	Given a finite transitive group $G\leq \sym(\Omega)$, a subset $\mathcal{F}$ of $G$ is \emph{intersecting} if any two elements of $\mathcal{F}$ agree on some element of $\Omega$. The \emph{intersection density} of $G$, denoted by $\rho(G)$, is the maximum of the rational number $|\mathcal{F}|\left(\frac{|G|}{|\Omega|}\right)^{-1}$ when $\mathcal{F}$ runs through all intersecting sets in $G$. In this paper, we prove that if $G$ is the group $\sym(n)$ or $\alt(n)$ acting on the $k$-subsets of $\{1,2,3\ldots,n\}$, for $k\in \{3,4,5\}$, then $\rho(G)=1$. Our proof relies on the representation theory of the symmetric group and the ratio bound.
\end{abstract}

\subjclass[2010]{Primary 05C35; Secondary 05C69, 20B05}

\keywords{derangement graph, cocliques, Erd\H{o}s-Ko-Rado
  theorem, alternating groups, symmetric groups}

\date{January 21, 2022}

\maketitle

\section{Introduction}
\subsection{Main results}
Let  $G\leq \sym(\Omega)$ be a finite transitive group. A subset $\mathcal{F	}\subset G$ is \emph{intersecting} if for any $g,h\in \mathcal{F	}$, there exists $\omega\in \Omega$ such that $\omega^g = \omega^h$, or equivalently, $\omega^{hg^{-1}} = \omega$. We are interested in the size and the combinatorial structure of  the largest intersecting sets of $G\leq \sym(\Omega)$. It is easy to see that a coset of a stablizer of a point (i.e., sets of the form $\left\{ g \in G \mid \omega^g = \omega^\prime \right\}$, for some $\omega,\omega^\prime \in \Omega$) is a natural example of an intersecting set in $G$; we will call such intersecting sets \emph{canonical}. Hence, an intersecting set of maximum size in $G$ has size at least $\frac{|G|}{|\Omega|}$.

Let $[n]:=\{1,2,\ldots,n\}$. In 1977, Deza and Frankl found the size of the maximum intersecting set when $G$ is the symmetric group of $\Omega = [n]$, that is, $\sym(n)$. 
\begin{thm}[Deza-Frankl]
	Let $n\geq 3$. If $\mathcal{F} \subset \sym(n)$ is intersecting, then $|\mathcal{F}|\leq (n-1)!$.\label{thm:deza-frankl}
\end{thm}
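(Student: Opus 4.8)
The plan is to prove the bound by a clean partition argument rather than by spectral methods. First I would fix an $n$-cycle, say $\sigma=(1\,2\,\cdots\,n)\in\sym(n)$, and set $H=\langle\sigma\rangle$, a cyclic subgroup of order $n$. The left cosets of $H$ partition $\sym(n)$ into exactly $\frac{n!}{n}=(n-1)!$ blocks, so it suffices to show that any intersecting set $\mathcal F$ contains at most one element of each block. Indeed, if $g,h$ lie in a common coset then $g^{-1}h\in H$, so $h=g\sigma^k$ for some $k\in\{0,1,\ldots,n-1\}$ and $hg^{-1}=g\sigma^kg^{-1}$ is conjugate to $\sigma^k$; hence $hg^{-1}$ has the same cycle type as $\sigma^k$, namely $\gcd(k,n)$ cycles of common length $n/\gcd(k,n)$. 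Such an element has a fixed point only when $n\mid k$, i.e.\ only when $k=0$ and $h=g$. Therefore, if $g\ne h$ lie in the same coset of $H$, then $hg^{-1}$ is a derangement, and by the equivalence $\omega^{g}=\omega^{h}\iff\omega^{hg^{-1}}=\omega$ recalled above, the pair $\{g,h\}$ is not intersecting. Consequently $|\mathcal F\cap gH|\le 1$ for every coset, and summing over the $(n-1)!$ cosets yields $|\mathcal F|\le(n-1)!$.

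I do not expect a genuine obstacle in this argument: once the $n$-cycle coset decomposition is in place, it is essentially a one-line counting bound, and the only point requiring a little care is the dictionary between "$g$ and $h$ agree on a point'' and "$hg^{-1}$ fixes a point'', together with the conjugation-invariance of cycle type. The same decomposition also makes the extremal case transparent: a point stabilizer is a complement to $H$ in $\sym(n)$, so each of its cosets meets every block of the partition exactly once, which is why the canonical intersecting sets attain the bound $(n-1)!$.

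An alternative route, more in the spirit of the rest of the paper, is spectral: the intersecting sets of $\sym(n)$ acting on $[n]$ are precisely the cocliques of the derangement graph $\Gamma_n=\cay(\sym(n),D_n)$, where $D_n$ is the set of derangements. Since $\Gamma_n$ is a normal Cayley graph, its eigenvalues are $\eta_\lambda=\frac{1}{\chi^\lambda(1)}\sum_{\sigma\in D_n}\chi^\lambda(\sigma)$ indexed by partitions $\lambda\vdash n$, with valency $\eta_{(n)}=|D_n|$; one then applies the ratio bound $\alpha(\Gamma_n)\le \frac{n!}{1-|D_n|/\tau}$, where $\tau$ is the least eigenvalue. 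The substantive input here, due to Renteln and independently to Ku--Wales, is that $\tau=-|D_n|/(n-1)$, attained at the standard character $\lambda=(n-1,1)$; granting this, the ratio bound collapses to exactly $(n-1)!$. The main obstacle in this second approach is precisely proving that $(n-1,1)$ minimizes $\eta_\lambda$, which requires a character recurrence estimate; the coset argument sidesteps it entirely, so I would present the coset proof as the proof and keep the spectral argument in mind only as the template that the subsequent sections adapt to the action on $k$-subsets.
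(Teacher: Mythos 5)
Your coset argument is correct, and it is in fact the classical Deza--Frankl proof: the powers $\sigma^k$, $1\le k\le n-1$, of an $n$-cycle are fixed-point-free, conjugation preserves cycle type, so $hg^{-1}=g\sigma^k g^{-1}$ is a derangement whenever $g\ne h$ lie in the same coset of $H=\langle\sigma\rangle$; hence each of the $(n-1)!$ cosets is a clique in the derangement graph and meets an intersecting set at most once. Note, however, that the paper does not prove this theorem at all --- it is quoted as background with attribution to Deza and Frankl --- so there is no in-paper argument to compare against; what the paper does prove by spectral means are the analogous bounds for the action on $k$-subsets, where no such clean clique partition is available. Your clique-partition proof is the more elementary route and immediately explains tightness (each coset of a point stabilizer meets every coset of $H$ exactly once, since no nontrivial power of $\sigma$ fixes a point), while your alternative spectral sketch is accurate as far as it goes: the substantive input there is that the least eigenvalue of $\cay(\sym(n),D_n)$ is $-|D_n|/(n-1)$, attained at $[n-1,1]$, and that is exactly the kind of eigenvalue control the paper has to engineer by hand (via weighted adjacency matrices) in the $k$-subset setting. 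One cosmetic remark: the word ``complement'' in your tightness aside should be read as an exact factorization $\sym(n)=\sym(n-1)\,H$ rather than a group-theoretic complement to a normal subgroup, but this does not affect the bound, which is complete as you wrote it.
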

The largest intersecting sets of $\sym(n)$ were characterized in 2004 by Cameron and Ku \cite{cameron2003intersecting}, and independently, by Larose and Malvenuto \cite{larose2004stable}. Another proof which uses only algebraic arguments was given by Godsil and Meagher in \cite{godsil2009new}.
\begin{thm}
	Let $n\geq 3$. If $\mathcal{F} \subset \sym(n)$ is intersecting of size $(n-1)!$, then $\mathcal{F}$ is a coset of a stabilizer of a point. In particular, there exist $i,j\in \{1,2,\ldots,n\}$ such that
	\begin{align*}
		\mathcal{F} = \left\{ \sigma\in \sym(n) \mid j = i^\sigma \right\}.
	\end{align*} \label{thm:char-sym}
\end{thm}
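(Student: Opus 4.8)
The plan is to realize intersecting sets as cocliques (independent sets) of the \emph{derangement graph} $\Gamma_n$, the Cayley graph on $\sym(n)$ whose connection set is the set of fixed-point-free permutations; then Theorem~\ref{thm:deza-frankl} is the statement that the independence number of $\Gamma_n$ equals $(n-1)!$, and I want to pin down the cocliques attaining it. Since the connection set is closed under conjugation, $\Gamma_n$ is a normal Cayley graph, so its eigenvalues are indexed by $\irr(\sym(n))$, with $\eta_\chi=\frac{1}{\chi(1)}\sum_{\sigma}\chi(\sigma)$, the sum over all derangements $\sigma$, and with eigenspace the $\chi$-isotypic component $U_\chi$ of $\mathbb{C}[\sym(n)]$ whenever the value $\eta_\chi$ is not repeated. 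I would invoke the known fact (Renteln; Ku--Wales) that the least eigenvalue of $\Gamma_n$ is $-d_n/(n-1)$, where $d_n$ is the number of derangements, and that it is attained \emph{only} by the standard character $\chi_{(n-1,1)}$. The ratio (Hoffman) bound then reproves the upper bound $(n-1)!$, and, crucially, its equality case forces the characteristic vector $v_{\mathcal F}$ of a maximum intersecting set to satisfy $v_{\mathcal F}-\tfrac1n\mathbf 1\in U_{(n-1,1)}$, i.e.\ $v_{\mathcal F}\in\mathbb{C}\mathbf 1\oplus U_{(n-1,1)}$.

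The second step is to recognise this subspace concretely. For $i,j\in[n]$ let $S_{ij}=\{\sigma\in\sym(n):i^\sigma=j\}$ be the canonical intersecting set through $i\mapsto j$ (a coset of the stabilizer of $i$) and let $x_{ij}$ be its characteristic vector. The relations $\sum_j x_{ij}=\mathbf 1=\sum_i x_{ij}$, a dimension count, and the fact that $\mathrm{span}\{x_{ij}\}$ is stable under left and right translation by $\sym(n)$ together show $\mathrm{span}\{x_{ij}\}=\mathbb{C}\mathbf 1\oplus U_{(n-1,1)}$. Hence $v_{\mathcal F}=\sum_{i,j}a_{ij}x_{ij}$ for reals $a_{ij}$; equivalently, with $P_g$ the permutation matrix of $g$, there is a real matrix $A=(a_{ij})$ with $v_{\mathcal F}(g)=\langle A,P_g\rangle=\sum_i a_{i,\,i^g}$. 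The task is now purely linear-combinatorial: to show that the only $\{0,1\}$-vectors of this form with exactly $(n-1)!$ ones are the $x_{ij}$ themselves.

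For this endgame I would translate so that $\mathrm{id}\in\mathcal F$, so that every element of $\mathcal F$ fixes a point, and exploit that $b_{kl}:=\langle v_{\mathcal F},x_{kl}\rangle=|\mathcal F\cap S_{kl}|$ is a non-negative integer with all row sums and all column sums equal to $(n-1)!$. The Gram matrix of the $x_{ij}$ yields a linear formula for $a_{kl}$ in terms of $B=(b_{kl})$ and its marginals; substituting back and using $\sum_k b_{k,k^g}=\sum_{\sigma\in\mathcal F}\fix(\sigma g^{-1})$ one finds that $\sum_{\sigma\in\mathcal F}\fix(\sigma g^{-1})$ takes only the two values $(n-2)(n-2)!$ and $(2n-2)(n-2)!$ as $g$ varies, the larger one occurring exactly for $g\in\mathcal F$. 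Combined with the coclique condition $\fix(\sigma g^{-1})\ge 1$ for all $\sigma,g\in\mathcal F$, this rigidity forces $A$, after subtracting the admissible row- and column-constant corrections, to be a single coordinate matrix $E_{i_0j_0}$; that is, $\mathcal F=S_{i_0j_0}$, and undoing the translation shows the original $\mathcal F$ is a coset of a point stabilizer. The small cases $n=3,4$ can be handled by direct inspection.

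I expect the two external inputs to be the real obstacles. The assertion that $-d_n/(n-1)$ is the least eigenvalue of $\Gamma_n$ and is attained \emph{uniquely}, by $\chi_{(n-1,1)}$, is delicate and rests on nontrivial character-sum estimates; without that uniqueness one cannot conclude that the minimal eigenspace is exactly $U_{(n-1,1)}$, which is what makes the module argument run. And the rigidity step of the last paragraph — excluding ``spread-out'' $\{0,1\}$-solutions inside $\mathbb{C}\mathbf 1\oplus U_{(n-1,1)}$ — is the combinatorial heart of the theorem; it is precisely what is carried out, by somewhat different routes, in Cameron--Ku \cite{cameron2003intersecting}, Larose--Malvenuto \cite{larose2004stable} and Godsil--Meagher \cite{godsil2009new}. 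As an alternative to the representation-theoretic line, one may argue directly as Cameron and Ku do: fix a maximum intersecting $\mathcal F\ni\mathrm{id}$, pick $\sigma\in\mathcal F\setminus\{\mathrm{id}\}$ with the fewest fixed points, and construct an explicit injection that contradicts $|\mathcal F|=(n-1)!$ unless $\mathcal F$ lies in a point stabilizer.
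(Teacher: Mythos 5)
First, a point of comparison: the paper does not prove this statement at all --- Theorem~\ref{thm:char-sym} is quoted as background and attributed to Cameron--Ku \cite{cameron2003intersecting}, Larose--Malvenuto \cite{larose2004stable} and Godsil--Meagher \cite{godsil2009new}, so there is no in-paper argument to measure you against. Judged on its own, your proposal is an accurate roadmap of the algebraic (Godsil--Meagher style) route, but it is not a proof: the two decisive steps are invoked rather than established, and you say so yourself. Concretely, (a) the statement that the least eigenvalue of $\Gamma_n$ is $-d_n/(n-1)$ and is attained \emph{only} on the $(n-1,1)$-isotypic component is a genuinely hard theorem (Renteln, Ku--Wales); without the uniqueness of the minimal eigenspace the ratio-bound equality does not place $v_{\mathcal F}-\tfrac1n\mathbf 1$ inside $U_{(n-1,1)}$, and nothing in your sketch supplies it. (b) The ``endgame'' is asserted, not argued: from the two-value claim for $\sum_{\sigma\in\mathcal F}\fix(\sigma g^{-1})$ you jump to ``this rigidity forces $A$ to be a single coordinate matrix.'' That deduction is exactly the combinatorial heart of the theorem (the exclusion of non-canonical $\{0,1\}$-vectors in $\mathbb{C}\mathbf 1\oplus U_{(n-1,1)}$), and it requires a real argument --- in the literature it is done via an analysis of the matrix $A$ up to row/column-constant shifts together with the intersecting condition, or via the Cameron--Ku fixing/injection argument --- none of which appears here. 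Also, the two-value claim itself is only verified implicitly for the canonical example; for an arbitrary maximum $\mathcal F$ it needs a derivation from the module membership, which you have not given.

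Two smaller remarks. The route you chose is heavier than necessary: Godsil--Meagher avoid the least-eigenvalue theorem altogether by using the clique--coclique bound in the conjugacy class scheme (the derangement graph contains cliques of size $n$, e.g.\ the cyclic group generated by an $n$-cycle), and equality there already forces $v_{\mathcal F}\in\mathbb{C}\mathbf 1\oplus U_{(n-1,1)}$; if you want a self-contained argument, that is the cheaper door. Your identification of $\mathrm{span}\{x_{ij}\}$ with $\mathbb{C}\mathbf 1\oplus U_{(n-1,1)}$ and the translation to the matrix $A$ with $v_{\mathcal F}(g)=\sum_i a_{i,i^g}$ are correct and are indeed the right framework; but as submitted, the proposal reduces the theorem to precisely the results it was meant to prove, so it should be counted as an outline with a genuine gap rather than a proof.
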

Therefore, the largest intersecting sets in $\sym(n)$ with its action on $[n]$ are the canonical intersecting sets. For arbitrary transitive groups, however, intersecting sets can have size larger than the order of a stabilizer of a point and the maximum intersecting sets can have a more complex structure than the canonical intersecting sets (see \cite{meagher180triangles} for instance). The smallest example of transitive groups (both in terms of order and degree) whose maximum intersecting sets have size larger than the order of stabilizer of a point is the alternating group $\alt(4)$ acting on the $2$-subsets of $\{1,2,3,4\}$ (see \cite{AMC2554}).

Note that Theorem~\ref{thm:deza-frankl} and Theorem~\ref{thm:char-sym} are generalization of the well-known Erd\H{o}s-Ko-Rado (EKR) Theorem \cite{erdos1961intersection} for the symmetric group.
We say that a finite transitive group $G\leq \sym(\Omega)$ has the \emph{EKR-property} if any intersecting set of $G$ has size at most $\frac{|G|}{|\Omega|}$. Moreover, $G$ has the \emph{strict EKR-property} if any intersecting set of maximum size (i.e. of size $\frac{|G|}{|\Omega|}$) is a coset of a stabilizer of an element of $\Omega$. Examples of groups having the EKR property are the finite $2$-transitive groups \cite{meagher2016erdHos}. However, there are $2$-transitive groups such as $\pgl{3}{q}$ acting on the projective plane \cite{meagher2014erdos} that do not have the strict-EKR property. See \cite{ahmadi2014new,ahmadi2015erdHos,meagher2015erdos,ellis2011intersecting,meagher2019erdHos,meagher2011erdHos,spiga2019erdHos} for other examples concerning EKR and/or strict-EKR properties.

For any transitive group $G\leq \sym(\Omega)$ and an intersecting set $\mathcal{F} \subset G$, the \emph{intersection density} of $\mathcal{F}$ is the number 
\begin{align*}
	\rho(\mathcal{F}) = \frac{|\mathcal{F}|}{|G_\omega|},
\end{align*}
 where $G_\omega$ is the stabilizer of the point $\omega \in \Omega$ in $G$.
The \emph{intersection density} of the group $G$ is the quantity
\begin{align*}
	\rho(G) = \max \left\{ \rho(\mathcal{F}) \mid \mathcal{F} \subset G \mbox{ is intersecting} \right\}.
\end{align*}
The intersection density of a transitive group was introduced in \cite{li2020erd} and was generalized to arbitrary finite permutation groups in \cite{meagher180triangles}. This parameter was introduced to measure how far from having the EKR property a transitive group can be. Indeed, $G\leq \sym(\Omega)$ has the EKR property if and only if $\rho(G) =1$. Moreover, if $\rho(G)= k>1$, then the largest intersecting set of $G$ has size $k$ times the order of a point stabilizer of $G$ (i.e., the size of the canonical intersecting sets). Several papers on the intersection density of transitive groups have recently appeared in the literature \cite{hujdurovic2022intersection,2021arXiv210803943H,hujdurovic2021intersection,li2020erd,AMC2554,razafimahatratra2021intersection}. 

In this paper, we find the intersection density of the groups $\sym(n)$ acting on the $k$-subsets of $[n]$ for $k\in \{4,5\}$, and $\alt(n)$ acting on $k$-subsets of $[n]$ for $k\in \{3,4,5\}$. For $k\in\{2,3\}$, it was proved in \cite{behajaina20203} and \cite{meagher2021erdHos} that the intersection density of $\sym(n)$ acting on the $k$-subsets of $[n]$ is equal to $1$. In \cite{razafimahatratra2021intersection}, it was proved that the group $\alt(n)$ acting on the $2$-subsets of $[n]$ also has intersection density equal to $1$. For a fixed $k\in \mathbb{N}$ and $n$ large enough depending on $k$, Ellis \cite{ellis2012setwise} proved that the intersection density of $\sym(n)$ acting on the $k$-subsets of $[n]$ is equal to $1$.
Our main results are stated as follows.
\begin{thm}
	 If $k \in \{4,5\}$ and $S_k$ is the permutation group $\sym(n)$ acting on the $k$-subsets of $[n]$, then $\rho(S_k) = 1$.\label{thm:main-sym}
\end{thm}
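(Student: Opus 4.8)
The plan is to recast the problem spectrally, via the derangement graph, and to apply the ratio bound. Put $\Omega=\binom{[n]}{k}$ and $G=\sym(n)$. A permutation $\sigma$ fixes a $k$-subset $A$ exactly when $A$ is a union of cycles of $\sigma$, so the set $D_k$ of elements of $G$ with \emph{no} invariant $k$-subset consists of the $\sigma$ no submultiset of whose cycle lengths sums to $k$; in particular every $\sigma\in D_k$ has at most $k-1$ fixed points and, as $k\le 5$, very limited short-cycle structure. Intersecting subsets of $G$ are precisely the independent sets of $\Gamma_k:=\cay(\sym(n),D_k)$, and since $D_k$ is closed under conjugation and inversion, $\Gamma_k$ is a normal Cayley graph whose eigenvalues are
\[
\eta_\lambda=\frac{1}{\chi^\lambda(1)}\sum_{\sigma\in D_k}\chi^\lambda(\sigma)\qquad(\lambda\vdash n),
\]
with valency $\eta_{(n)}=|D_k|=:d$. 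A coset of the setwise stabiliser of a $k$-subset is an intersecting set of size $k!(n-k)!=n!/\binom{n}{k}$, so $\rho(S_k)\ge 1$ always, and it suffices to prove $\alpha(\Gamma_k)\le n!/\binom{n}{k}$. The ratio bound gives $\alpha(\Gamma_k)\le \frac{n!}{1-d/\tau}$, where $\tau<0$ is the least eigenvalue of $\Gamma_k$; thus the whole theorem reduces to the assertion
\[
\eta_\lambda\ \ge\ -\frac{d}{\binom{n}{k}-1}\qquad\text{for every }\lambda\vdash n,\ \lambda\ne(n).
\]
One may assume $n\ge 2k$, since for $n<2k$ the action on $k$-subsets is that on their complements, which reduces to a smaller value of $k$ already handled.

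The second step is to determine the eigenvalues on the irreducible constituents of $\mathbb{C}[\Omega]$, namely $\eta_{(n-i,i)}$ for $0\le i\le k$. By Young's rule the permutation character on $j$-subsets is $\pi_j=\sum_{i=0}^{j}\chi^{(n-i,i)}$, so $\sum_{\sigma\in D_k}\pi_j(\sigma)=\sum_{i=0}^{j}\chi^{(n-i,i)}(1)\,\eta_{(n-i,i)}$ for each $0\le j\le k$. On the other hand $\sum_{\sigma\in D_k}\pi_j(\sigma)=\binom{n}{j}\,\bigl|\{\sigma\in D_k:A_0^{\sigma}=A_0\}\bigr|$ for a fixed $j$-set $A_0$; writing such a $\sigma$ as the product of its restrictions to $A_0$ and to $[n]\setminus A_0$ and recording, for each cycle type on $A_0$, which invariant-subset sizes on $[n]\setminus A_0$ are thereby forbidden, one evaluates this in terms of counts of cycle-restricted permutations of an $(n-j)$-set. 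This produces a triangular linear system for $\eta_{(n)},\eta_{(n-1,1)},\dots,\eta_{(n-k,k)}$ (its top equation being $\sum_{\sigma\in D_k}\pi_k(\sigma)=0$, as elements of $D_k$ fix no $k$-set), and solving it gives
\[
\eta_{(n-1,1)}=\eta_{(n-2,2)}=\cdots=\eta_{(n-k,k)}=-\frac{d}{\binom{n}{k}-1},
\]
the common value being exactly what tightness of the ratio bound for canonical cocliques would force.

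The remaining --- and main --- difficulty is to show that no other $\eta_\lambda$ drops below $-d/\bigl(\binom{n}{k}-1\bigr)$. Since $\binom{n}{k}$ grows like $n^{k}$, the trivial bound $|\eta_\lambda|\le d\,\max_{\sigma\in D_k}|\chi^\lambda(\sigma)|/\chi^\lambda(1)$ is useless for partitions close to $(n)$, so there one must actually evaluate $\eta_\lambda$: for $\lambda=(n-m,\mu)$ with $\mu$ of bounded size, the values $\chi^\lambda(\sigma)$ with $\sigma\in D_k$ are controlled by the Murnaghan--Nakayama rule together with the facts that $\sigma$ has at most $k-1$ fixed points and bounded short-cycle data, and $\eta_\lambda$ is then computed, or at least bounded below by the target, by the same counting as for the $\eta_{(n-i,i)}$; partitions near $(1^n)$ are handled by the same analysis applied to $\chi^{\lambda'}$ after absorbing the sign, exploiting cancellation of $\operatorname{sgn}$ over $D_k$. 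For partitions $\lambda$ that remain a constant fraction away from both $(n)$ and $(1^n)$, every $\sigma\in D_k$ moves at least $n-k+1$ points, so Roichman-type character-ratio estimates give $|\chi^\lambda(\sigma)/\chi^\lambda(1)|\le q^{\,n-k+1}$ for an absolute $q<1$, far below $1/\bigl(\binom{n}{k}-1\bigr)$ once $n$ is large; the finitely many remaining pairs $(n,k)$ are dispatched by computing $\operatorname{spec}(\Gamma_k)$ directly. With all $\eta_\lambda$ ($\lambda\ne(n)$) now bounded below by $-d/\bigl(\binom{n}{k}-1\bigr)$, the ratio bound yields $\alpha(\Gamma_k)\le n!/\binom{n}{k}=k!(n-k)!$, which the canonical intersecting sets attain, so $\rho(S_k)=1$. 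I expect essentially all the work to lie in this last step --- carrying out the partition-by-partition analysis near the two extremes cleanly enough, with a low enough threshold on $n$, that the residual finite verification is feasible for $k\in\{4,5\}$.
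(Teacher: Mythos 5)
Your reduction to the ratio bound and your identification of the relevant objects are fine, but the central claim of Step 2 is false, and with it the whole strategy of applying the \emph{unweighted} ratio bound to the full derangement graph $\cay(\sym(n),D_{n,k})$ collapses. The triangular system you describe only yields the single exact relation $\sum_{i=0}^{k}f^{(n-i,i)}\eta_{(n-i,i)}=0$ (from $j=k$) together with, for $j<k$, identities involving the counts $N_j=|\{\sigma\in D_{n,k}:A_0^\sigma=A_0\}|$; these counts do not force the common value $-d/\bigl(\binom{n}{k}-1\bigr)$ — that value is exactly what tightness would require, i.e.\ you have assumed the conclusion. Concretely, already for $k=2$, $n=6$ the $2$-derangements are the classes of cycle type $(5,1)$, $(3,3)$, $(6)$, so $d=144+40+120=304$, and
\begin{align*}
\eta_{(5,1)}=\tfrac{1}{5}\bigl(144\cdot 0+40\cdot(-1)+120\cdot(-1)\bigr)=-32,\qquad
\eta_{(4,2)}=\tfrac{1}{9}\bigl(144\cdot(-1)\bigr)=-16,
\end{align*}
neither equal to $-d/\bigl(\binom{6}{2}-1\bigr)=-304/14\approx-21.7$; worse, $\eta_{(5,1)}$ lies strictly \emph{below} your target, so the inequality you need fails at the standard representation itself. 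This is not a small-$n$ accident: for any $k\ge 2$ a positive proportion of $k$-derangements are fixed-point-free, so $\eta_{(n-1,1)}=\frac{1}{n-1}\sum_{\sigma\in D_{n,k}}(\fix(\sigma)-1)\approx -c\,d/n$ for some constant $c>0$, which is asymptotically far more negative than $-d/\bigl(\binom{n}{k}-1\bigr)\sim -k!\,d/n^{k}$. Hence the plain Hoffman bound on the full $k$-derangement graph can never return $k!(n-k)!$ for $k\ge 2$, and the later steps (Murnaghan--Nakayama near the extremes, Roichman-type bounds in the middle) cannot repair a failure that already occurs at $(n-1,1)$.

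This is precisely why the paper does not work with the full derangement graph. Instead it builds a \emph{weighted} adjacency matrix supported on a handful of conjugacy classes of $k$-derangements, each containing one very long cycle (length $n-a$ with $a$ small), chosen separately for $n$ even and odd so that the classes generate $\sym(n)$ (resp.\ $\alt(n)$ in the companion result). The weights are obtained by solving an overdetermined but consistent linear system forcing the eigenvalue at $[n]$ to be $\binom{n}{k}-1$ and the eigenvalues at $[n-i,i]$, $1\le i\le k$, together with a few further low partitions such as $[n-3,2,1]$, to be $-1$. Because each chosen class has a long cycle, the Murnaghan--Nakayama rule gives $|\chi^\lambda(x)|\le 3$ uniformly in $\lambda$ on these classes, so the triangle inequality shows $|\xi_\lambda|<1$ whenever $f^\lambda$ exceeds roughly $\binom{n}{5}$ or $2\binom{n}{6}$; the finitely many families of partitions of smaller dimension are classified (Lemmas~\ref{lem:characters-dim-4}--\ref{lem:characters-dim-6}) and their eigenvalues computed explicitly to be at least $-1$, with small $n$ checked by computer. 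If you want to salvage your outline, the missing idea is exactly this freedom to reweight (equivalently, to pass to a spanning subgraph) so that the constituents of the permutation module genuinely attain the least eigenvalue.
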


Our proof for Theorem~\ref{thm:main-sym} relies on the Hoffman bound and weighted adjacency matrices. Tough the method used in this paper is essentially similar to the one used in \cite{meagher2021erdHos,razafimahatratra2021intersection}, the eigenvalues coming from low dimensional characters are much harder to control. Using an argument on the multiplicity of the largest eigenvalue of these weighted adjacency matrices, we can also prove the following.

\begin{thm}
	If $k \in \{3,4,5\}$ and $A_k$ is the group $\alt(n)$ acting on the $k$-subsets of $[n]$, then $\rho(A_k) = 1$.\label{thm:main-alt}
\end{thm}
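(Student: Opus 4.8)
The plan is to carry out, one level down, the weighted Hoffman-bound argument behind Theorem~\ref{thm:main-sym}, now on the derangement graph of $A_k=\alt(n)$ acting on $k$-subsets, feeding in the known equalities $\rho(S_k)=1$ (Theorem~\ref{thm:main-sym} for $k\in\{4,5\}$, and \cite{behajaina20203,meagher2021erdHos} for $k=3$) through the eigenvalue identities they encode. Let $\Gamma$ be the normal Cayley graph on $\alt(n)$ whose connection set is the set of even permutations fixing no $k$-subset of $[n]$; an intersecting set of $A_k$ is exactly a coclique of $\Gamma$, a coset of a point stabilizer is a coclique of size $|\alt(n)|/\binom nk$, so it suffices to show $\alpha(\Gamma)\le|\alt(n)|/\binom nk$. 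I would build a weighted adjacency matrix $M=\sum_C w_C A_C$, where $C$ ranges over the $\sym(n)$-conjugacy classes of $k$-subset derangements contained in $\alt(n)$ and $A_C$ is the Cayley graph on $\alt(n)$ with connection set $C$. Then $M$ has zero diagonal, and for any intersecting set $\mathcal F$ the principal submatrix $M[\mathcal F]$ vanishes (if $g,h\in\mathcal F$ then $gh^{-1}$ lies in no derangement class); hence the ratio bound applies as soon as $d:=\sum_C w_C|C|$, the eigenvalue afforded by the trivial character, is the largest eigenvalue of $M$.

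For $n$ large the partitions $(n-j,j)$, $0\le j\le k$, are not self-conjugate, so the constituents $\chi^{(n-j,j)}$ of the permutation module on $k$-subsets restrict irreducibly and pairwise inequivalently to $\alt(n)$, and the eigenvalue of $M$ on $\chi^{(n-j,j)}\!\downarrow_{\alt(n)}$ is $\eta_j(w)=\frac1{\chi^{(n-j,j)}(1)}\sum_C w_C|C|\,\chi^{(n-j,j)}(C)$ — the very rational function of $n$ that appears in the $\sym(n)$ analysis, but with the sum now over even classes only. I would solve the linear system imposing $\eta_1(w)=\dots=\eta_k(w)$: there are only $k\le 5$ equalities to satisfy, the number of even $k$-subset-derangement classes is (one checks) more than enough for all large $n$, and the solution is an explicit tuple of rational functions of $n$. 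Writing $\tau$ for this common value, the identity
\[
\sum_{j=0}^{k}\bigl(\dim\chi^{(n-j,j)}\bigr)\eta_j(w)=\sum_C w_C|C|\cdot\bigl|\fix_{\binom{[n]}{k}}(C)\bigr|=0,
\]
valid for any weighting supported on derangement classes because such classes fix no $k$-subset, together with $\sum_{j=1}^k\dim\chi^{(n-j,j)}=\binom nk-1$, forces $\tau=-d/\bigl(\binom nk-1\bigr)$; the ratio bound then reads $\alpha(\Gamma)\le|\alt(n)|\cdot\frac{-\tau}{d-\tau}=|\alt(n)|/\binom nk$, which is the assertion. (Self-conjugate $\lambda$ give no trouble: since $M$ is supported on full $\sym(n)$-classes, $\chi^{\lambda,+}$ and $\chi^{\lambda,-}$ receive the same eigenvalue as $\chi^{\lambda}\!\downarrow_{\alt(n)}$, so they are covered by the $\sym(n)$-type estimates.)

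What remains — and this is the genuinely hard part, exactly as flagged in the text for Theorem~\ref{thm:main-sym} — is to confirm the two spectral inequalities left open: that $d$ is the largest eigenvalue of $M$ and that $\tau=-d/\bigl(\binom nk-1\bigr)$ is the smallest, i.e.\ $\tau\le\eta_\mu(w)\le d$ for every $\mu\in\irr(\alt(n))$ not occurring in $\mathbb C^{\binom{[n]}{k}}$. The sensitive eigenvalues are those coming from low-dimensional characters; for these I would invoke standard asymptotic bounds on the character ratios $\chi^\mu(C)/\chi^\mu(1)$ at a fixed cycle type, which turn both inequalities into finitely many comparisons of rational functions of $n$, valid above an explicit threshold on $n$. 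The multiplicity argument enters precisely here to keep $d$ on top: one argues that the top of the spectrum of $M$ must have multiplicity one — information read off from the analogous weighted matrix $\widetilde M$ for $S_k$, whose largest eigenvalue is simple and afforded by the all-ones vector on $\sym(n)$, which has no zero coordinate (so that comparing $M$ with $\widetilde M$ obstructs any non-principal eigenvalue from reaching the top) — and since every non-principal irreducible of $\alt(n)$ has degree far larger than $1$, the top eigenvalue of $M$ can then only be $d$. Finally, the finitely many small $n$ not covered by the asymptotics are disposed of by directly computing the relevant spectra.
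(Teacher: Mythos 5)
Your skeleton is essentially the paper's: weights supported on $\sym(n)$-classes of even $k$-derangements, eigenvalues read off character by character, and the permutation-character identity $\sum_{j=0}^{k} f^{[n-j,j]}\eta_j=0$ forcing $\tau=-d/\bigl(\tbinom{n}{k}-1\bigr)$ and hence the bound $|\alt(n)|/\tbinom{n}{k}$ (the paper phrases this on $\sym(n)$: since the connection set lies in $\alt(n)$, $\cay(\sym(n),C)$ is two disjoint copies of $\cay(\alt(n),C)$ with the same spectrum, so the $\sym(n)$ ratio bound halves). The genuine gap is that everything that makes the theorem true is deferred. You never specify the conjugacy classes, never show that the system $\eta_1=\cdots=\eta_k$ (together with whatever extra constraints are needed) has a solution supported on even classes, and, decisively, never verify that for that solution every remaining irreducible character gives an eigenvalue at least $\tau$. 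Appealing to ``standard asymptotic bounds on character ratios'' does not close this: the dangerous characters are exactly the low-dimensional ones, where character ratios are not small, and the paper's proof consists precisely of (a) classifying all partitions of dimension below $\tbinom{n}{5}$, resp.\ $2\tbinom{n}{6}$ (Lemmas~\ref{lem:characters-dim-4}--\ref{lem:characters-dim-6}, proved by a branching-rule induction), (b) choosing classes such as $(n-7,5,1^2)$, $(n-9,6,1^3)$, $(n-8,4^2)$ so that Murnaghan--Nakayama bounds $|\chi^\lambda|$ by $2$ or $3$ on them, (c) pinning several additional eigenvalues to $-1$ to make the linear system rigid, and (d) computing each low-dimensional eigenvalue as an explicit rational function of $n$ and checking its sign. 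Whether an even-class weighting satisfying all of this exists is the content of the theorem, not a routine check; and for $k=3$ it does not follow from the known $\rho(S_3)=1$, because the weighting of \cite{behajaina20203} uses odd classes, so the paper must produce new weightings (for even $n$, a point outside the original polytope) and redo the spectral verification.

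A secondary problem: your ``multiplicity argument'' for keeping $d$ at the top of the spectrum is both unnecessary and unsound as stated. Working directly on $\alt(n)$, the weighted ratio bound needs only the constant row sum $d$ (the eigenvalue of the all-ones vector) and the minimum eigenvalue $\tau$; it does not require $d$ to be the largest eigenvalue, so no comparison with the matrix $\widetilde M$ on $\sym(n)$ is needed, and the comparison you sketch (the all-ones vector ``having no zero coordinate'' obstructing non-principal eigenvalues) does not prove anything. In the paper the multiplicity-two condition plays a different role: it certifies that the connection set consists of even permutations, so that the $\sym(n)$ graph splits into two copies of the $\alt(n)$ graph and the coclique bound can be halved. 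Your remark about self-conjugate partitions (weights constant on full $\sym(n)$-classes give $\chi^{\lambda,+}$ and $\chi^{\lambda,-}$ the same eigenvalue as $\chi^{\lambda}$) is correct, and the small-$n$ computer check matches the paper; but as it stands the proposal is a plan whose hard step is asserted rather than proved.
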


\subsection{Motivation}
Our motivation to prove these two theorems stems from the intersection density of vertex-transitive graphs. We can easily extend the notion of intersection density to vertex-transitive graphs as follows. Given a graph $X = (V,E)$, we say that $X$ is \emph{vertex transitive} if the automorphism group $\Aut(X)$ of $X$ acts transitively on the vertex set $V(X)$ of $X$. If $X = (V,E)$ is a vertex-transitive graph, then we define the \emph{intersection density} of $X$ to be
\begin{align*}
	\rho(X) = \max\left\{ \rho(H) \mid H\leq \Aut(X) \mbox{ is transitive} \right\}.
\end{align*}

Given two positive integers $n$ and $k$ such that $n\geq k$, the \emph{Kneser graph} $K(n,k)$ is the graph with vertex set equal to the collection of all $k$-subsets of $[n]$ and two $k$-subsets $A$ and $B$ of $[n]$ are adjacent if and only if $A\cap B = \varnothing$. The Kneser graph $K(2k,k)$ is a union of edges and $K(n,k)$ is the empty graph whenever $n<2k$. For any $n\geq 4$ and $k\geq 2$ such that $2k<n$, the automorphism group of the Kneser graph $K(n,k)$ is the transitive group $\sym(n)$ acting on the $k$-subsets of $[n]$. The proof of this fact is given in \cite[Chapter~7]{godsil2001algebraic}. Therefore, the study of the intersection density of $K(n,k)$ is equivalent to the study of the intersection density of the $k$\emph{-homogeneous} subgroups of $\sym(n)$ (i.e., subgroups of $\sym(n)$ that are transitive on $k$-subsets of $[n]$). The groups $\sym(n)$ and $\alt(n)$ are always $k$-homogeneous and when $k\geq 6$, they are the only families of groups that are $k$-homogeneous. For $2\leq k \leq 5$, there are various groups that are $k$-homogeneous. See \cite[Theorem~9.4B]{dixon1996permutation} and \cite[Theorem~5.2]{graham1997handbook} for details.
Therefore, the natural starting point in the study of the intersection density of Kneser graphs is the two families, $\alt(n)$ and $\sym(n)$, which are always $k$-homogeneous, for $k\geq 2$.

\subsection{Preliminary reduction}

First, we reduce the problem of finding the intersection density of a transitive group to a problem on the independence number of a graph associated with the group. Recall that if $G$ is a group and $C \subset G\setminus \{1\}$ is \emph{inverse-closed} (i.e., if $x\in C$, then $x^{-1} \in C$), then the \emph{Cayley graph} $\cay(G,C)$ is the graph with vertex set equal to $G$, where two group elements $g$ and $h$ are adjacent if $hg^{-1} \in C$. If $C$ is invariant under conjugation by elements of $G$, then the graph $\cay(G,C)$ is called a \emph{normal Cayley graph}.

Given finite transitive group $G\leq \sym(\Omega)$, the \emph{derangement graph } of $G$, denoted by $\Gamma_G$, is the graph whose vertex set is $G$ and two group elements $g$ and $h$ are adjacent if and only if $hg^{-1}$ is a \emph{derangement} (i.e., a fixed-point-free permutation) of $G$. Equivalently, the derangement graph $\Gamma_G$ is the Cayley graph $\cay(G,\Der(G))$, where $\Der(G)$ is the set of all derangements of $G$. We observe that $g$ and $h$ are intersecting if and only if there exists $\omega \in \Omega$ such that $ \omega^g = \omega^h \Leftrightarrow \omega^{hg^{-1}} = \omega$. We then deduce that $\mathcal{F}$ is an intersecting set in $G$ if and only if it is an \emph{independent set} or a \emph{coclique} in $\Gamma_G$. Therefore, the problem of finding $\rho(G)$ can be reduced to finding $\alpha(\Gamma_G)$ since $\rho(G) = \frac{\alpha(\Gamma_G)}{|G_\omega|}$, where $\omega \in \Omega$. 

Given two intersecting permutations $\sigma,\pi \in \sym(n)$ in its action on the $k$-subsets of $[n]$, there exists a $k$-subset $S$ of $[n]$ such that $S^\pi = S^\sigma$. That is, $\sigma$ and $\pi$ agree on a $k$-subset of $[n]$. We say that two permutations $\sigma$ and $\pi$ of $\sym(n)$ are $k$\emph{-setwise intersecting} if they agree on a $k$-subset of $[n]$ and more generally, we say that $\mathcal{F} \subset \sym(n)$ is $k$\emph{-setwise intersecting} if any two permutations in $\mathcal{F}$ are $k$-setwise intersecting. Consequently, studying the intersection density of the group $\sym(n)$ acting on the $k$-subsets of $[n]$ is equivalent to studying the $k$-setwise intersecting sets of $\sym(n)$.

Recall that a \emph{partition} $\lambda = [\lambda_1,\lambda_2,\ldots,\lambda_k]$ of the integer $n$ is a non-increasing sequence of positive integers summing to $n$. That is, $\lambda_1\geq\lambda_2\geq\ldots\geq \lambda_k \geq 1$ and $\lambda_1+\lambda_2+\ldots+\lambda_k = n$. A partition  $\lambda$ of $n$ will be denoted by $\lambda \vdash n$.

 Given a $k$-subset $S$ of $[n]$, the stabilizer of $S$ in $\sym(n)$ is conjugate to $\sym(k) \times \sym(n-k)$. Therefore, a derangement in $\sym(n)$ acting on the $k$-subsets of $[n]$ is a permutation that does not have a subpartition of $k$ in its cycle type. That is, if $\sigma\in \sym(n)$ is a permutation with cycle type $\lambda = (\lambda_1,\lambda_2,\ldots,\lambda_t) \vdash n$, then $\sigma$ is a derangement for the action of $\sym(n)$ on the $k$-subsets of $[n]$ if and only if there is no $\mu = (\mu_1,\mu_2,\ldots,\mu_\ell) \vdash k$ such that $\mu_i \in \left\{\lambda_1,\lambda_2,\ldots,\lambda_t \right\}$, for all $i\in \{1,2,\ldots,\ell\}$. A derangement for the action of $\sym(n)$ on the $k$-subsets of $[n]$ is called a $k$\emph{-derangement} and we denote the set of all $k$-derangements of $\sym(n)$ by $D_{n,k}$.

Using the information from the two previous paragraphs, one can deduce that the derangement graph of $\sym(n)$ acting on the $k$-subsets of $[n]$ is the Cayley graph $\Gamma_{n,k} := \cay(\sym(n),D_{n,k})$. Using the above-mentioned reduction, an \emph{independent set} or a \emph{coclique} of $\Gamma_{n,k}$ is exactly a $k$-setwise intersecting set of $\sym(n)$, and vice versa. Therefore, algebraic graph theory results such as the Hoffman bound can be applied.

\subsection{Organization}
This paper is organized as follows. First, in Section~\ref{sect:background} and Section~\ref{sect:rep-theory} we give the background materials that are needed in our proofs of the main results. In Section~\ref{sect:idea}, we give the main ideas of our proofs. The sections after this are devoted to the proof of Theorem~\ref{thm:main-sym} and Theorem~\ref{thm:main-alt}. In Section~\ref{sect:application}, we prove results on the intersection density of the Kneser graphs $K(n,4)$ and $K(n,5)$.

\section{The Ratio bound}\label{sect:background}
We let $G\leq \sym(\Omega)$ be a finite transitive group. 
Since the problem of finding the largest sets of intersecting permutations of the group $G$ can be reduced to the independence number of the derangement graph, we present a standard spectral upper bound on the latter. 

Recall that $X = (V,E)$ is $k$-regular if the degree of each vertex of $X$ is equal to $k$. It is easy to see that the all-ones vector $\mathbf{1}$ (with the appropriate dimension) is an eigenvector with eigenvalue $k$ of the adjacency matrix of $X$. In fact, $k$ is the largest eigenvalue of $X$. Since the derangement graph $\Gamma_{n,k} = \cay(\sym(n),D_{n,k})$ defined in the previous section is a Cayley graph, it is regular with degree $|D_{n,k}|$.

In the following lemma, we present the famous ratio bound. See \cite{haemers2021hoffman} for the history of this bound.

\begin{lem}[ratio bound or Hoffman bound]
	Let $X = (V,E)$ be a $k$-regular graph with minimum eigenvalue $\tau$. The independence number of $X$ is such that
	\begin{align*}
		\alpha(X) \leq \frac{|V(X)|}{1-\frac{k}{\tau}}.
	\end{align*} \label{lem:ratio-bound}
\end{lem}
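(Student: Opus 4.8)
The statement to prove is the classical ratio bound (Lemma~\ref{lem:ratio-bound}): for a $k$-regular graph $X=(V,E)$ with least eigenvalue $\tau$, one has $\alpha(X)\le \frac{|V(X)|}{1-k/\tau}$. Here is how I would organize the argument.

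The plan is to use the standard interlacing/quotient-matrix argument phrased via characteristic vectors and the spectral decomposition of the adjacency matrix $A$ of $X$. First I would set $n=|V(X)|$ and let $S$ be an independent set with $|S|=\alpha=\alpha(X)$, with characteristic vector $x\in\{0,1\}^{V}$. Since $X$ is $k$-regular and connected-or-not doesn't matter, $A$ is real symmetric, so it has an orthonormal eigenbasis; the all-ones vector $\mathbf{1}$ is an eigenvector with eigenvalue $k$, which is the largest eigenvalue (by Perron--Frobenius, or directly since every entry of $Ax$ is bounded by $k\|x\|_\infty$). Decompose $x = \frac{\alpha}{n}\mathbf{1} + z$, where $z\perp \mathbf{1}$, i.e. $z$ lies in the span of the eigenspaces for eigenvalues $\mu$ with $\mu\le k$ (and in fact $\tau\le\mu\le k$). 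Then compute the two scalars $x^\top x = \alpha$ and $x^\top A x$. Because $S$ is independent, $A$ has no edges inside $S$, so $x^\top A x = 0$.

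Next I would expand both quantities using the decomposition. We get $x^\top x = \frac{\alpha^2}{n} + z^\top z$, hence $z^\top z = \alpha - \frac{\alpha^2}{n} = \alpha\left(1-\frac{\alpha}{n}\right)$. For the quadratic form, since $\mathbf{1}$ is an eigenvector with eigenvalue $k$ and $z$ is orthogonal to it, $x^\top A x = \frac{\alpha^2}{n^2}\,\mathbf{1}^\top A\mathbf{1} + z^\top A z = \frac{\alpha^2}{n^2}\cdot kn + z^\top A z = \frac{k\alpha^2}{n} + z^\top A z$. Now every eigenvalue appearing in $z$ is at least $\tau$ (the minimum eigenvalue), so $z^\top A z \ge \tau\, z^\top z$. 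Combining with $x^\top A x = 0$:
\begin{align*}
0 = x^\top A x = \frac{k\alpha^2}{n} + z^\top A z \ge \frac{k\alpha^2}{n} + \tau\, z^\top z = \frac{k\alpha^2}{n} + \tau\,\alpha\left(1-\frac{\alpha}{n}\right).
\end{align*}
Dividing through by $\alpha>0$ (the case $\alpha=0$ being trivial) gives $0 \ge \frac{k\alpha}{n} + \tau - \frac{\tau\alpha}{n}$, i.e. $\frac{\alpha}{n}(\tau - k) \ge \tau$. Since $\tau<0$ for any graph with at least one edge (the trace of $A$ is $0$, so the least eigenvalue is strictly negative unless $X$ has no edges, a trivial case), we have $\tau - k < 0$, and dividing flips the inequality: $\frac{\alpha}{n} \le \frac{\tau}{\tau-k} = \frac{1}{1-k/\tau}$, which is exactly the claimed bound $\alpha(X)\le \frac{|V(X)|}{1-k/\tau}$.

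There is no serious obstacle here; the argument is entirely standard. The only points requiring a word of care are: (i) justifying that $k$ is the largest eigenvalue and that $\mathbf{1}$ is the corresponding eigenvector (immediate from regularity), so that the component of $x$ along $\mathbf{1}$ is cleanly separated; (ii) the sign of $\tau$, needed so that dividing by $\tau-k$ reverses the inequality — handled by noting $\operatorname{tr}(A)=0$ forces $\tau<0$ whenever $E\neq\varnothing$ (and if $E=\varnothing$ the statement is vacuous or the bound is interpreted as $\alpha(X)=|V(X)|$); and (iii) the inequality $z^\top A z \ge \tau z^\top z$, which is just the variational characterization of the minimum eigenvalue applied on the subspace $\mathbf{1}^\perp$. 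For completeness one could also remark that equality holds iff $z$ lies entirely in the $\tau$-eigenspace, which is the hook later used when analyzing when the ratio bound is tight for the derangement graphs $\Gamma_{n,k}$.
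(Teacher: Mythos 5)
Your argument is correct and is the standard proof of the ratio bound: decompose the characteristic vector of an independent set as $x=\frac{\alpha}{n}\mathbf{1}+z$ with $z\perp\mathbf{1}$, use $x^\top Ax=0$ together with $z^\top Az\ge\tau\,z^\top z$, and solve for $\alpha$, with the sign of $\tau$ handled via $\operatorname{tr}(A)=0$. The paper does not reprove this lemma but cites it from the literature (Haemers; Godsil--Meagher, Chapter~2), and the proof given there is essentially the one you wrote, so there is nothing to reconcile; your closing remark that the omitted cross terms vanish because $A\mathbf{1}=k\mathbf{1}$ and $z\perp\mathbf{1}$ is exactly the point that makes the computation clean.
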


As noted in \cite[Chapter~2]{godsil2016erdos}, the proof of Lemma~\ref{lem:ratio-bound} does not use the fact that the adjacency matrix of the regular graph $X$ is a $\{0,1\}$-matrix. Therefore, Lemma~\ref{lem:ratio-bound} can be generalized further (see \cite[Theorem~2.4.2]{godsil2016erdos} for the generalization). First, we will define weighted adjacency matrices and then we will state this generalization for derangement graphs.

Let $\mathcal{C}$ be the set of all conjugacy classes of $G$. For any $C\in \mathcal{C}$, let $A_C$ be the $|G|\times |G|$ matrix with entries from $\{0,1\}$ and indexed by the group elements in its rows and columns such that the entry $(g,h)$ has the property
\begin{align*}
	A_C(g,h) =1 \Leftrightarrow hg^{-1} \in C.
\end{align*}
Let $\mathcal{A}(G) = \left\{ A_C \mid C\in \mathcal{C} \right\}$. The set of matrices $\mathcal{A}(G)$ forms an \emph{association scheme} called the \emph{conjugacy class scheme} of $G$. For more information on this scheme, see \cite[Section~3.3]{godsil2016erdos}. When $G = \sym(n)$, the conjugacy class of permutations with cycle type $\lambda \vdash n$ is denoted by $C_\lambda$ and the corresponding matrix in the association scheme $\mathcal{A}(\sym(n))$ is $A_\lambda$. To avoid confusion, we will use parentheses for the cycle types of elements of the symmetric group.

It is not hard to see that the adjacency matrix of the derangement graph $\Gamma_G$ is a sum of matrices of $\mathcal{A}(G)$. Precisely, if $D_1,D_2,\ldots,D_k \in \mathcal{C}$ are the conjugacy classes of derangements of $G$, then the adjacency matrix of $\Gamma_G$ is 
\begin{align*}
	\sum_{i=1}^k A_{D_i}.
\end{align*}

Since the edges of $\Gamma_G$ are governed by the derangements, it is sometimes possible to improve Lemma~\ref{lem:ratio-bound} for derangement graphs by assigning weights to edges. For derangement graphs, we can find an upper bound on the independence number from the conjugacy class scheme of the underlying group. We say that a $|G|\times |G|$ symmetric matrix $A$ (indexed by the group elements in its rows and columns) is a \emph{weighted adjacency matrix} of a subgraph $\Gamma$ of $\Gamma_G$ if  $A$ has a constant row sum and  
\begin{align*}
	g\not\sim_{\Gamma} h \Rightarrow A(g,h) = 0.
\end{align*}
Note that $g$ and $h$ need not be non-adjacent in $\Gamma$ when $A(g,h) = 0$. It is not hard to see that a linear combination of the classes corresponding to derangements in $\mathcal{A}(G)$ is a weighted adjacency matrix of some spanning subgraph of $\Gamma_G$. If  $D_1,D_2,\ldots,D_k \in \mathcal{C}$ are the conjugacy classes of derangements of $G=\sym(n)$ and $A = \sum_{i=1}^k \omega_i A_{D_i}$, then $A$ is a weighted adjacency matrix of the graph 
\begin{align*}
	\cay(\sym(n),\cup_{\{i \mid \omega_i \neq 0\}} D_i).
\end{align*}

\begin{rmk}
	Note that when $G = \sym(n)$, then the conjugacy classes are inverse-closed. Consequently, the matrices in $\mathcal{A}(G)$ are all symmetric. Therefore, the eigenvalues of any weighted adjacency matrix of a normal Cayley graph over $G$ are real numbers. This is not true in general for an arbitrary group $G$. To obtain real eigenvalues, the matrices that correspond to the conjugacy classes that contains $x$ and the one containing $x^{-1}$ need to be weighted equally.
\end{rmk}

For the remainder of this section, we shall assume that $G\leq \sym(\Omega)$ has inverse-closed conjugacy classes. The following lemma is the generalization of the ratio bound for derangement graphs. The proof follows from \cite[Theorem~2.4.2]{godsil2016erdos}.
\begin{lem}
	Let $D_1,D_2,\ldots,D_k$ be all the conjugacy classes of derangements of $G\leq \sym(\Omega)$ and let $A$ be in the span of $\left\{A_{D_1},A_{D_2},\ldots,A_{D_k}\right\}$. Assume that the row sum of $A$ is $k$ and its smallest eigenvalue is $\tau$. If $A$ is a weighed adjacency matrix of a subgraph $\Gamma$ of $\Gamma_G$, then 
	\begin{align*}
		\alpha(\Gamma) \leq \frac{|V(X)|}{1 - \frac{k}{\tau}}.
	\end{align*}\label{lem:wRatioBound}
\end{lem}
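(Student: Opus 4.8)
The plan is to mimic the classical proof of the ratio bound (Lemma~\ref{lem:ratio-bound}), replacing the $\{0,1\}$-adjacency matrix by the weighted matrix $A$; here $|V(X)|$ should be read as $|V(\Gamma)| = |G|$. Let $S$ be an independent set of $\Gamma$ with $|S| = \alpha(\Gamma)$ and let $v\in\mathbb{R}^{|G|}$ be its characteristic $0/1$-vector. The first step is to record two structural facts about $A$: since $A$ is symmetric with constant row sum $k$, the all-ones vector $\mathbf{1}$ is an eigenvector of $A$ with eigenvalue $k$; and since $A$ lies in the span of $\{A_{D_1},\dots,A_{D_k}\}$ while the identity of $G$ belongs to no conjugacy class $D_i$ of derangements, the diagonal of $A$ is zero.

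The second step is to compute $v^{\top}Av$ in two ways. On the one hand, $v^{\top}Av = \sum_{g,h\in S} A(g,h)$; each diagonal term vanishes by the previous paragraph, and each off-diagonal term $A(g,h)$ with $g\neq h$ in $S$ vanishes because $g\not\sim_\Gamma h$ and $A$ is a weighted adjacency matrix of $\Gamma$. Hence $v^{\top}Av = 0$. On the other hand, write $v = \tfrac{\alpha}{|G|}\mathbf{1} + z$ with $z\perp\mathbf{1}$; since $\mathbf{1}$ is an eigenvector the cross terms cancel, and using the Rayleigh estimate $z^{\top}Az\geq\tau\,z^{\top}z$ (valid for every vector $z$, as $\tau$ is the least eigenvalue of the symmetric matrix $A$) we get
\begin{align*}
0 = v^{\top}Av = \frac{\alpha^{2}}{|G|^{2}}\,\mathbf{1}^{\top}A\mathbf{1} + z^{\top}Az = \frac{k\,\alpha^{2}}{|G|} + z^{\top}Az \ \geq\ \frac{k\,\alpha^{2}}{|G|} + \tau\,\|z\|^{2}.
\end{align*}
Since $\|z\|^{2} = \|v\|^{2} - \tfrac{\alpha^{2}}{|G|} = \alpha - \tfrac{\alpha^{2}}{|G|}$, dividing the resulting inequality by $\alpha>0$ yields $\tfrac{\alpha}{|G|}(k-\tau)\leq -\tau$, that is, $\alpha(\Gamma)\leq \frac{|G|}{1-k/\tau}$, as claimed.

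The only point requiring a little care is the direction of the final division: one needs $k-\tau>0$ (equivalently $1-k/\tau>0$) for the inequality to come out as stated. This is automatic as soon as $\Gamma$ has at least one edge, i.e. as soon as some weight is nonzero, since then $A\neq 0$ has zero trace and hence a strictly negative eigenvalue, so that $\tau<0\leq k$ (the row sum being $k=\sum_i\omega_i|D_i|\geq 0$ in our applications); the degenerate case $A=0$ is vacuous. I do not expect any genuine obstacle here: the argument is essentially a verbatim transcription of \cite[Theorem~2.4.2]{godsil2016erdos}, the substantive observation being merely that the proof of Lemma~\ref{lem:ratio-bound} never used that the matrix was $0/1$-valued, only that it was symmetric, had constant row sum, and vanished off the non-edges of the graph.
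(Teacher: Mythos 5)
Your argument is correct and is essentially the paper's proof: the paper simply defers to \cite[Theorem~2.4.2]{godsil2016erdos}, and your Rayleigh-quotient computation (zero diagonal and vanishing off non-edges giving $v^{\top}Av=0$, the decomposition $v=\tfrac{\alpha}{|G|}\mathbf{1}+z$ with $A\mathbf{1}=k\mathbf{1}$, and $z^{\top}Az\geq\tau\|z\|^{2}$) is precisely the standard proof of that cited result, with $|V(X)|$ correctly read as $|G|$. Your closing remark on the degenerate case $k-\tau\le 0$ is a reasonable way to handle an edge case the paper (and its reference) implicitly excludes, so no gap remains.
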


Now, we obtain an upper bound on the independence number of $\Gamma_G$ using the simple observation that $\Gamma$  is a spanning subgraph of $\Gamma_G$. Therefore, $\alpha(\Gamma_G) \leq \alpha(\Gamma)$ and  Lemma~\ref{lem:wRatioBound} yields an upper bound on the independence number of $\Gamma_G$.  

Since Lemma~\ref{lem:wRatioBound} depends on eigenvalues, we present a result on the eigenvalues of the weighted adjacency matrices of $\Gamma_G$. Its proof is a result of the existence of a basis of idempotents of the matrix algebra generated by $\mathcal{A}(G)$, i.e., the Bose-Mesner algebra (see \cite[Section~3.4]{godsil2016erdos} for details).

\begin{lem}
	Let $D_1,D_2,\ldots,D_k$ be all the conjugacy classes of derangements of $G\leq \sym(\Omega)$ and let $A = \displaystyle\sum_{i=1}^k \omega_i A_{D_i}$, where $\omega_i \in \mathbb{R}$ for all $i\in [k]$. Then, the spectrum of $A$ is determined by the irreducible characters of $G$. In particular, any irreducible character $\chi$ of $G$ corresponds to the eigenvalue
	\begin{align*}
		\frac{1}{\chi(id)} \sum_{i=1}^k \omega_i |D_i|\chi(g_i),
	\end{align*} 
	where $g_1,g_2,\ldots,g_k$ are elements of the conjugacy classes of derangements $D_1,D_2,\ldots,D_k$, respectively.\label{lem:eigenvalues-scheme}
\end{lem}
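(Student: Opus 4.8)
The plan is to identify the weighted adjacency matrix $A=\sum_{i=1}^{k}\omega_i A_{D_i}$ with an operator on the group algebra $\mathbb{C}[G]$ and then invoke the Wedderburn decomposition of $\mathbb{C}[G]$. Writing $e_g$ for the standard basis vector indexed by $g\in G$, and using that the conjugacy classes of $G$ are inverse-closed (so that $A_C$ is symmetric), the matrix $A_C$ represents, with respect to $\{e_g\}_{g\in G}$, left multiplication by the class sum $\widehat{C}:=\sum_{c\in C}c$ on the regular $\mathbb{C}[G]$-module: the coefficient of $e_h$ in $\widehat{C}\cdot e_g$ is $1$ exactly when $hg^{-1}\in C$. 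Since each $\widehat{D_i}$ is central in $\mathbb{C}[G]$, the element $z:=\sum_{i=1}^{k}\omega_i\widehat{D_i}$ lies in the centre $Z(\mathbb{C}[G])$, and $A$ is precisely the matrix of left multiplication by $z$.

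First I would decompose the regular module as $\mathbb{C}[G]\cong\bigoplus_{\chi\in\irr(G)}V_\chi^{\oplus\chi(1)}$, where $V_\chi$ is the irreducible module affording $\chi$. Because $z$ is central, Schur's lemma forces $z$ to act on each $V_\chi$ as a scalar $\omega_\chi$; taking the trace of this action on $V_\chi$ and using that $\chi$, extended linearly to $\mathbb{C}[G]$, is constant on conjugacy classes gives
\begin{align*}
	\chi(1)\,\omega_\chi=\sum_{i=1}^{k}\omega_i\sum_{c\in D_i}\chi(c)=\sum_{i=1}^{k}\omega_i\,|D_i|\,\chi(g_i),
\end{align*}
so $\omega_\chi=\frac{1}{\chi(1)}\sum_{i=1}^{k}\omega_i\,|D_i|\,\chi(g_i)$, which is the asserted value. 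Since $V_\chi$ occurs with multiplicity $\chi(1)$ in the regular module, $\omega_\chi$ is an eigenvalue of $A$ of multiplicity $\chi(1)^2$, and $\sum_{\chi\in\irr(G)}\chi(1)^2=|G|$; as $A$ is a $|G|\times|G|$ matrix, the numbers $\{\omega_\chi\}_{\chi\in\irr(G)}$ therefore constitute its entire spectrum, which is exactly the assertion that the spectrum is determined by the irreducible characters of $G$.

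An equivalent route, matching the remark preceding the statement, is to argue directly inside the Bose--Mesner algebra of the conjugacy class scheme $\mathcal{A}(G)$: this commutative algebra has a basis of primitive idempotents $\{E_\chi\}_{\chi\in\irr(G)}$, the standard computation of its first eigenmatrix yields $A_C=\sum_{\chi\in\irr(G)}\frac{|C|\,\chi(g_C)}{\chi(1)}\,E_\chi$ for every conjugacy class $C$, and taking the corresponding linear combination of the $A_{D_i}$ reads off the eigenvalues at once. In either formulation the computation is routine; the step deserving the most care is the multiplicity bookkeeping, namely checking that the eigenvalues $\omega_\chi$ — each of multiplicity $\chi(1)^2$ — already exhaust all $|G|$ eigenvalues of $A$, so that no value outside the list $\{\omega_\chi\}$ can occur. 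I do not anticipate any genuine obstacle.
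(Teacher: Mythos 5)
Your proposal is correct and is in substance the same argument the paper relies on: the paper does not write out a proof but simply appeals to the idempotent basis of the Bose--Mesner algebra of the conjugacy class scheme, and your main derivation (identifying $A$ with left multiplication by the central element $z=\sum_i\omega_i\widehat{D_i}$, applying Schur's lemma on each isotypic component, and reading off the scalar from the trace, with multiplicities $\chi(1)^2$ exhausting all $|G|$ eigenvalues) is exactly the standard computation underlying that citation. Your second paragraph is literally the route the paper points to, so the only difference is that your write-up is self-contained where the paper defers to \cite{godsil2016erdos}.
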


\section{Representation theory}\label{sect:rep-theory}
In this section, we present the main tools that we need in our proof of the main results. We assume that the reader is familiar with the representation theory of the symmetric group, as we will only give the main results. We  refer the readers to the textbook on the representation theory of the symmetric group by Sagan \cite{sagan2001symmetric}.

\subsection{Low-dimensional irreducible characters}
It is well-known that the irreducible representations of $\sym(n)$ are uniquely determined by the partitions of $n$. If $\lambda \vdash n$, then the corresponding irreducible $\mathbb{C}\sym(n)$-module is the \emph{Specht} module $S^\lambda$. If $\mathfrak{X}^\lambda$ is the matrix representation equivalent to $S^\lambda$, then the corresponding irreducible character is denoted by $\chi^\lambda$. For any $\lambda \vdash n$ and permutation $\sigma \in \sym(n)$ in a conjugacy class $C$ with cycle-type $\rho$, we define simultaneously $\chi_\rho^\lambda$ and $\chi^\lambda(C)$ to be the value $\chi^\lambda(\sigma)$. The \emph{dimension} of the irreducible character $\chi^\lambda$ is the natural number $f^\lambda:= \chi^\lambda(id)$, where $id$ is the identity map in $\sym(n)$.

Given $\lambda\vdash n$, the partition $\lambda^\prime$ is the partition of $n$ obtained from transposing the Young diagram corresponding to $\lambda$. It is well known that if $S^\lambda$ is the Specht module corresponding to $\lambda\vdash n$, then $S^{\lambda^\prime} = S^{[1^n]} \otimes S^{\lambda}$. Therefore, the identity $f^\lambda = f^{\lambda^\prime}$ follows.

The next three lemmas give the irreducible characters of dimension at most $\binom{n}{4}$, $\binom{n}{5}$, and $2\binom{n}{6}$. 

\begin{lem}
	Let $n\geq 15$ and $\lambda\vdash n$. If $f^\lambda < \binom{n}{4}$, then $\lambda \in S_n$ or $\lambda^\prime \in S_n$, where 
	\begin{align*}
		S_n = \left\{ 
		\begin{matrix}
			[n],\\
			[n-1,1],\\
			[n-2,2],[n-2,1^2],\\
			[n-3,3],[n-3,2,1], [n-3,1^3],\\
			[n-4,4],[n-4,1^4]
		\end{matrix}
		  \right\}.
	\end{align*}\label{lem:characters-dim-4}
\end{lem}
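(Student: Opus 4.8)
The plan is to prove Lemma~\ref{lem:characters-dim-4} by a careful case analysis on the shape of $\lambda$, using the hook length formula to compute or bound $f^\lambda = \chi^\lambda(\mathrm{id})$ from below, together with the symmetry $f^\lambda = f^{\lambda'}$ to reduce to the case $\lambda_1 \ge \lambda_1'$ (equivalently, the first row is at least as long as the first column). First I would record the standard low-dimensional formulas: $f^{[n]} = 1$, $f^{[n-1,1]} = n-1$, $f^{[n-2,2]} = \binom{n}{2} - n = \frac{n(n-3)}{2}$, $f^{[n-2,1^2]} = \binom{n-1}{2}$, $f^{[n-3,3]} = \binom{n}{3} - \binom{n}{2}$, $f^{[n-3,2,1]} = \frac{n(n-2)(n-4)}{3}$, $f^{[n-3,1^3]} = \binom{n-1}{3}$, $f^{[n-4,4]} = \binom{n}{4} - \binom{n}{3}$, $f^{[n-4,1^4]} = \binom{n-1}{4}$, and check directly that each of these is indeed $< \binom{n}{4}$ for $n \ge 15$ (the tightest of these is $f^{[n-4,4]} \sim \binom{n}{4}$, but the subtracted $\binom{n}{3}$ term keeps it strictly below). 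These verifications are routine polynomial inequalities in $n$.

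The substantive part is showing that every partition \emph{not} in the list $S_n \cup S_n'$ has $f^\lambda \ge \binom{n}{4}$. I would organize this by $\lambda_1$, the length of the first row (having already reduced to $\lambda_1 \ge \lambda_1'$, so in particular $\lambda_1 \ge \sqrt n$, and for $n \ge 15$ one gets $\lambda_1 \ge n - $ (small) in the cases that survive). The key combinatorial input is a monotonicity/branching bound: removing a cell from a corner of $\lambda$ gives a partition $\mu \vdash n-1$ with $f^\lambda \ge f^\mu$ when the corner is chosen appropriately, and more usefully $f^\lambda \ge f^{\lambda^-}$ summed over corners (the branching rule $f^\lambda = \sum_{\lambda^- } f^{\lambda^-}$), which lets one induct downward on $n$. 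Concretely, if $\lambda$ is not of the listed form, then either $\lambda_2 \ge 5$, or $\lambda$ has at least five rows beyond the first, or $\lambda_1 \le n - 5$; in each such regime I would exhibit a standard Young tableau count or a sub-diagram comparison forcing $f^\lambda$ to exceed $\binom{n}{4}$. For instance, if $\lambda_1 \le n-5$ then $\lambda$ contains the diagram $[n-5, 5]$ or a taller diagram of the same size in its "lower-left", and $f^{[n-5,5]} = \binom{n}{5} - \binom{n}{4} \ge \binom{n}{4}$ for $n \ge 15$; monotonicity of $f$ under inclusion of Young diagrams (via the hook length formula, or via counting SYT) then finishes that branch. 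The small rows $\lambda_i$ for $i \ge 2$ and columns are handled symmetrically via $\lambda'$.

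The main obstacle I anticipate is making the "$f$ is monotone under sub-diagram inclusion" step precise and uniform: there is no completely trivial inequality $f^\mu \le f^\lambda$ whenever $\mu \subseteq \lambda$ as Young diagrams with $|\mu| < |\lambda|$, so I would instead lean on the branching rule iteratively — $f^\lambda = \sum f^{\lambda^-} \ge f^{\lambda^-}$ for any single corner removal — to climb from a known "large" partition of size $n$ (like $[n-5,5]$, $[n-3,2,1]$ with extra cells, or a partition with six or more parts) up to $\lambda$, checking at each step that a legal sequence of cell-additions exists. The bookkeeping of which partitions of each size $n, n-1, \dots$ are "small" (dimension $< \binom{n}{4}$) must be threaded through the induction base cases carefully, and the finitely many values $15 \le n \le 20$ or so where the asymptotics are not yet decisive may need to be checked by hand or cited from a character table. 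I would present the case split cleanly, defer the pure polynomial-inequality verifications to a short computation, and state the branching-rule monotonicity as the single lemma doing the real work.
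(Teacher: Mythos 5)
Your verification that the nine listed partitions (and their transposes) have degree below $\binom{n}{4}$ is fine and matches what the paper records in its tables. But the substantive half of your plan has a genuine gap, and it sits exactly where the difficulty is. After the reduction $\lambda_1\ge\lambda_1'$ you only get $\lambda_1\ge\sqrt{n}$, not $\lambda_1\ge n-O(1)$, so "square-ish" shapes such as $[5,5,5]$ or $[3^5]$ for $n=15$ survive, and for these your key step fails: $\lambda_1\le n-5$ does \emph{not} imply $[n-5,5]\subseteq\lambda$, and no partition of $n$ other than $\lambda$ itself is contained in $\lambda$, so "a taller diagram of the same size in its lower-left" cannot be meant literally. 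Moreover, the monotonicity you hesitate over is actually the unproblematic part ($\mu\subseteq\lambda$ does give $f^\mu\le f^\lambda$ by iterating the branching rule); the real problem is quantitative. A sub-partition $\mu\vdash m<n$ that is "large" in the inductive sense only yields $f^\lambda\ge\binom{m}{4}$, which is strictly weaker than the required $\binom{n}{4}$, and a single corner removal never recovers the lost factor $n/(n-4)$. So the induction as you set it up does not close; you would need either a mechanism producing a factor of (at least) two, or explicit Rasala-type lower bounds for partitions with $\lambda_1,\lambda_1'\le n-5$, and the sketch supplies neither.

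For comparison, the paper's proof (written out for Lemma~\ref{lem:characters-dim-6} and stated to be the same for this lemma) is a top-down induction on $n$ via restriction of characters: for $\mu\vdash n+1$ of small degree one looks at the constituents of $\chi^\mu$ restricted to $\sym(n)$ and splits into three cases — some constituent lies in the list (then the branching rule pins down $\mu$, and the few candidates outside $S_{n+1}\cup S_{n+1}'$ are excluded by the degree table); at least two constituents, none in the list (then $f^\mu\ge 2\binom{n}{4}>\binom{n+1}{4}$, the factor of two doing the work your single-corner argument cannot); or the restriction is irreducible, forcing $\mu$ rectangular, in which case one restricts two levels to $\sym(n-1)$ and again gets a contradiction — with the small values of $n$ checked by computer. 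If you want to salvage your bottom-up approach, you must build in the analogue of the second and third cases (at least two removable corners outside the list, with rectangles treated separately), rather than relying on containment of a fixed large shape.
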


\begin{lem}
	Let $n\geq 19$ and $\lambda \vdash n$. If $f^\lambda <\binom{n}{4}$, then $\lambda$ is one of the partitions given in Lemma~\ref{lem:characters-dim-4}. If $\binom{n}{4}<f^\lambda <\binom{n}{5}$, then $\lambda\in T_n$ or $\lambda^\prime \in T_n$, where
	\begin{align*}
		T_n =\left\{
		\begin{matrix}
			[n-4,3,1], [n-4,2^2], [n-4,2,1^2],\\
			[n-5,5],[n-5,1^5]
		\end{matrix}
		\right\}.
	\end{align*}\label{lem:characters-dim-5}
\end{lem}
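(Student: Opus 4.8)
The plan is to reduce everything to known explicit lower bounds on the dimensions $f^\lambda$ for partitions $\lambda$ that are not ``close to a hook or a near-hook'', and then to handle the finitely many remaining shapes by direct computation of $f^\lambda$ via the hook-length formula. First I would recall the standard fact (going back to work on minimal-degree representations of $\sym(n)$; see the tables in the literature on this problem) that the irreducible characters of $\sym(n)$ of smallest degrees are, in increasing order of degree, $\chi^{[n]}$ and $\chi^{[1^n]}$ (degree $1$), then $\chi^{[n-1,1]}$ and its conjugate (degree $n-1$), then $\chi^{[n-2,2]},\chi^{[n-2,1^2]}$ and conjugates (degrees $\binom{n}{2}-n$ and $\binom{n-1}{2}$), and so on, the general principle being that moving a box away from the first row or first column increases the degree and that the degree grows roughly like $n^{j}/j!$ for the ``$j$-th layer'' of partitions. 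The first step, therefore, is to invoke Lemma~\ref{lem:characters-dim-4}: any $\lambda$ with $f^\lambda<\binom n4$ is already on its list, which gives the first assertion for free once $n\geq 19\geq 15$.

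The core step is to show that if $\binom n4 < f^\lambda < \binom n5$, then $\lambda$ or $\lambda'$ lies in $T_n\cup S_n$; combined with the observation that every element of $S_n$ has degree $<\binom n4$ (which is where the hypothesis $n\geq 19$ is used to make the polynomial inequalities in $n$ hold), only $\lambda\in T_n$ or $\lambda'\in T_n$ survives. To get this I would argue contrapositively: assume $\lambda,\lambda'\notin S_n\cup T_n$ and show $f^\lambda\geq\binom n5$. By conjugation symmetry $f^\lambda=f^{\lambda'}$ we may assume $\lambda_1\geq\lambda_1'$, i.e. $\lambda$ is ``wide''. Write $\lambda=[n-m,\mu]$ where $\mu$ is the partition formed by the rows below the first; then $m=n-\lambda_1$ is the number of boxes not in the first row, and not being on the $S_n$-list forces $m\geq 5$ (the wide partitions with $m\leq 4$ are exactly those in $S_n$), while not being on the $T_n$-list rules out the specific shapes $[n-4,3,1],[n-4,2^2],[n-4,2,1^2],[n-5,5]$. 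Now I would use the branching rule / the monotonicity $f^{\lambda}\ge f^{\bar\lambda}$ under removing a box from the first row together with the standard estimate that for a wide partition with $m$ boxes off the first row one has $f^\lambda\ge\binom{n-1}{m}-(\text{lower order})$, or more cleanly, run an induction on $n$: removing a box from the last row of $\lambda$ (and possibly from the first row) produces partitions still outside the small lists, so by the inductive hypothesis they have degree $\ge\binom{n-1}{4}$, whence $f^\lambda\ge\binom{n-1}{4}+\binom{n-1}{4}\ge\binom n4$-type bounds accumulate to $\binom n5$. The finitely many base cases — the shapes with exactly $5$ boxes off the first row that are neither $[n-5,5]$ nor $[n-5,1^5]$, namely $[n-5,4,1],[n-5,3,2],[n-5,3,1^2],[n-5,2^2,1],[n-5,2,1^3]$ — are checked by the hook-length formula to already have $f^\lambda\ge\binom n5$ for $n\ge 19$, and similarly the $T_n$-shapes are checked to have $\binom n4<f^\lambda<\binom n5$ and the $S_n$-shapes to have $f^\lambda<\binom n4$; these are all polynomial-in-$n$ inequalities verified once and for all.

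The main obstacle I anticipate is organizing the induction/branching bookkeeping so that it is genuinely airtight rather than ``clearly the degree is at least a binomial coefficient''. The delicate point is that the crude bound $f^\lambda\ge f^{\lambda^-}$ for a single box removal is too weak — one really needs to sum over \emph{several} removable boxes (the branching rule gives $f^\lambda=\sum_{\lambda^-}f^{\lambda^-}$) and to know that enough of the resulting smaller partitions are themselves outside $S_{n-1}\cup T_{n-1}$ so that the inductive hypothesis applies with the larger bound $\binom{n-1}{5}$ rather than merely $\binom{n-1}{4}$. Handling the boundary shapes where a removable box could drop $\lambda$ onto one of the small lists (e.g. $[n-5,5]\to[n-6,5]$ is still on no list but $[n-5,2^2,1]$ has removals landing on $T$-shapes) requires a short case analysis; I would isolate these few ``unstable'' partitions and verify them directly by hook lengths, and let the induction cover everything else. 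Once the threshold $n\ge 19$ is chosen large enough that all the finitely many polynomial inequalities $f^\lambda<\binom n4$, $\binom n4<f^\lambda<\binom n5$, $f^\lambda\ge\binom n5$ hold for the relevant fixed shapes, the argument closes.
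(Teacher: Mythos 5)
Your overall strategy coincides with the paper's: the paper proves Lemma~\ref{lem:characters-dim-6} by induction on $n$ via the branching rule (computer-checked base cases, and the partitions obtained by adding a box to a listed shape checked directly against explicit dimension formulas), and states that Lemma~\ref{lem:characters-dim-5} is proved by the same argument using Table~\ref{tab:k5}; your ``unstable shapes verified by hook lengths, induction for everything else'' is exactly this scheme. The first assertion (quoting Lemma~\ref{lem:characters-dim-4} for $n\geq 19\geq 15$) and your direct verification that $[n-5,4,1]$, $[n-5,3,2]$, $[n-5,3,1^2]$, $[n-5,2^2,1]$, $[n-5,2,1^3]$ have dimension at least $\binom{n}{5}$ are fine.

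There is, however, a genuine gap in your inductive step: the case where $\lambda$ has a \emph{single} removable box, i.e.\ $\lambda$ is a rectangle $[a^b]$ (e.g.\ $[a,a]$ with $n=2a$). Such shapes are wide, lie on none of the lists, and are not among your ``unstable'' partitions, since removing the unique corner box does not land on $S_{n-1}\cup T_{n-1}$ or their transposes; so they fall into the part of your argument that requires ``several removable boxes,'' yet the branching rule gives only one constituent, and the inductive bound $f^{\lambda^-}\geq\binom{n-1}{5}$ does not reach $\binom{n}{5}$. This is precisely why the paper's proof of Lemma~\ref{lem:characters-dim-6} has a separate third case: for rectangular shapes it restricts two levels down, where the two constituents $[a^{b-1},a-2]$ and $[a^{b-2},(a-1)^2]$ are again off the lists, and the doubled lower bound two levels down beats the target binomial; your sketch needs the analogous argument (or a direct dimension bound for rectangles) to close. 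Two smaller inaccuracies: wide partitions with at most $4$ boxes off the first row are not ``exactly those in $S_n$'' --- $[n-4,3,1]$, $[n-4,2^2]$, $[n-4,2,1^2]$ lie in $T_n$ --- though this is harmless since you exclude both lists; and the arithmetic ``$\binom{n-1}{4}+\binom{n-1}{4}$ accumulating to $\binom{n}{5}$'' cannot work as written and must be replaced by at least two constituents each bounded below by $\binom{n-1}{5}$, as you yourself note later in the proposal.
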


\begin{lem}
	Let $n\geq 27$ and $\lambda \vdash n$. If $f^\lambda <\binom{n}{5}$, then $\lambda$ is one of the partitions given in Lemma~\ref{lem:characters-dim-4} and Lemma~\ref{lem:characters-dim-5}. If $\binom{n}{5}<f^\lambda <2\binom{n}{6}$, then $\lambda\in U_n$ or $\lambda^\prime \in U_n$, where
	\begin{align*}
	U_n =\left\{
	\begin{matrix}
	[n-5,4,1], [n-5,3,2],[n-5,3,1^2], [n-5,2^2,1], [n-5,2,1^3],\\
	[n-6,6],[n-6,1^6]
	\end{matrix}
	\right\}.
	\end{align*}\label{lem:characters-dim-6}
\end{lem}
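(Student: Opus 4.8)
The plan is to prove Lemma~\ref{lem:characters-dim-6} by the same inductive/branching strategy that (presumably) established Lemma~\ref{lem:characters-dim-4} and Lemma~\ref{lem:characters-dim-5}, pushing the dimension threshold one notch higher from $\binom{n}{5}$ to $2\binom{n}{6}$. The first step is to recall the explicit hook-length formula $f^\lambda = n!/\prod_{(i,j)\in\lambda} h(i,j)$ together with the monotonicity principle: if $\mu$ is obtained from $\lambda$ by moving a box from a lower row to a higher row (making $\lambda$ "flatter"), then $f^\mu \leq f^\lambda$, and strict inequality holds away from degenerate cases. Combined with the duality $f^\lambda = f^{\lambda'}$, this lets us restrict attention to partitions $\lambda$ with $\lambda_1 \geq n/2$, i.e. those with a long first row; everything else has dimension at least that of some partition we can bound below by $2\binom{n}{6}$ for $n$ large. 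So the real content is: among partitions with $\lambda_1$ close to $n$, identify exactly those with $f^\lambda < 2\binom{n}{6}$.

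Next I would organize the analysis by the size $m := n - \lambda_1$ of the "tail" $(\lambda_2,\lambda_3,\dots)$, which is a partition of $m$. For fixed small $m$ there are finitely many shapes of the tail, and for each the hook-length formula gives $f^\lambda$ as a polynomial in $n$ of degree exactly $m$ (with leading coefficient $1/(\text{product of hook lengths of the tail shape, suitably interpreted})$); concretely one has the standard estimate $f^{[n-m,\,\nu]} \sim c_\nu \, n^m/m!$-type growth. The threshold $2\binom{n}{6} \sim n^6/360$ is of polynomial degree $6$, so: tails with $m \leq 5$ always eventually fall below the threshold (these give the "small" characters listed in the earlier lemmas), tails with $m \geq 7$ always eventually exceed it, and $m = 6$ is the critical case that must be examined shape by shape. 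For $m=6$ I would list all partitions $\nu \vdash 6$, compute the leading term of $f^{[n-6,\nu]}$, and separate those whose leading coefficient is $< 1/360$ (these enter $U_n$, namely the two extreme shapes $[n-6,6]$ and its conjugate $[n-6,1^6]$) from the rest. Then for $m=5$ one must double-check that every partition listed in $T_n$ and in Lemma~\ref{lem:characters-dim-4} indeed has $f^\lambda < \binom{n}{5} < 2\binom{n}{6}$, that no new shape with $\binom{n}{5} < f^\lambda < 2\binom{n}{6}$ appears among the $m\leq 5$ tails, and that the shapes with $m=6$ that are \emph{not} in $U_n$ already exceed $2\binom{n}{6}$; this is where the five "three-row" shapes $[n-5,4,1],[n-5,3,2],[n-5,3,1^2],[n-5,2^2,1],[n-5,2,1^3]$ get pinned down as exactly the ones sitting in the band $(\binom{n}{5}, 2\binom{n}{6})$.

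The final bookkeeping step is to turn the asymptotic comparisons into a clean inequality valid for all $n \geq 27$ (not merely for $n$ large), which requires being a little careful near the boundary: one writes each $f^\lambda$ as an explicit product/polynomial in $n$, bounds the error terms, and verifies the threshold $n \geq 27$ suffices — possibly checking the handful of borderline $n$ by direct computation. The main obstacle I expect is precisely this: controlling lower-order terms well enough to get the \emph{sharp} cutoff at $n=27$ rather than some larger bound. The asymptotic picture is transparent, but several candidate shapes with $m=6$ have leading coefficients only slightly above or below $1/360$, so the comparison $f^\lambda \lessgtr 2\binom{n}{6}$ can be genuinely delicate for moderate $n$, and one has to be sure none of the "non-$U_n$" degree-$6$ shapes dips below the threshold for small $n$ in the allowed range. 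A secondary technical point is making the monotonicity reduction rigorous enough to reduce from "all $\lambda$" to "$\lambda$ with long first row or long first column" without hand-waving — but since the analogous reduction was already carried out for Lemma~\ref{lem:characters-dim-4} and Lemma~\ref{lem:characters-dim-5}, one can largely cite or mirror that argument, with the degree of the polynomial threshold being the only thing that changes.
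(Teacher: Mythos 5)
Your route is genuinely different from the paper's (the paper proves this lemma by induction on $n$ via the branching rule: base cases $n=27,28$ by computer, then for $\phi=\chi^\mu$ with $\mu\vdash n+1$ it splits according to whether $\phi\downharpoonleft_{[n]}$ has a constituent in $S_n\cup T_n\cup U_n$ or their conjugates, has at least two constituents outside these sets (forcing $f^\mu\geq 4\binom{n}{6}>2\binom{n+1}{6}$), or is irreducible (forcing $\mu$ rectangular, then restricting twice gives a contradiction)). A direct hook-length classification in the spirit you describe can in principle be made to work, but as written your argument has a genuine gap exactly at its first and most important step: the "monotonicity principle" you invoke -- moving a box to a higher row never increases $f^\lambda$ -- is false. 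For instance $f^{[2,2]}=2<3=f^{[3,1]}$ and $f^{[3,3]}=5<9=f^{[4,2]}$, so making a partition flatter can strictly increase the dimension. Consequently the reduction "we may assume $\lambda_1\geq n/2$ (or $\lambda_1'\geq n/2$), and everything else is bounded below by $2\binom{n}{6}$" is unsupported, and this reduction is precisely the hard content of the lemma: what one actually needs is a lower bound on $f^\lambda$ when both $\lambda_1$ and $\lambda_1'$ are short (where $f^\lambda$ grows superpolynomially), together with the fact that for fixed tail size $m=n-\lambda_1$ the minimum of $f^{[n-m,\nu]}$ over $\nu\vdash m$ is attained at $\nu=[m]$ or $\nu=[1^m]$, so that $m\geq 7$ forces $f^\lambda\geq\binom{n}{7}-\binom{n}{6}>2\binom{n}{6}$. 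Neither statement is proved or cited in your plan.

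Your fallback -- "cite or mirror the analogous reduction already carried out for Lemmas~\ref{lem:characters-dim-4} and \ref{lem:characters-dim-5}" -- does not exist in the paper: those lemmas are stated without proof and the paper explicitly says they are proved by the \emph{same} induction-plus-branching argument (using Tables~\ref{tab:k4} and \ref{tab:k5}), not by a hook-length monotonicity reduction, so this appeal is circular. The rest of your outline (organizing by tail size $m$, noting that among $m=6$ tails only $[n-6,6]$ and $[n-6,1^6]$ have leading coefficient below $1/360$, isolating the five three-row shapes with $m=5$ in the band $\bigl(\binom{n}{5},2\binom{n}{6}\bigr)$, and checking the cutoff $n\geq 27$ explicitly) is sound bookkeeping and matches the data in Table~\ref{tab:caseee}, but it only becomes a proof once the reduction step is replaced by a correct argument -- e.g.\ the paper's branching-rule induction, or an appeal to known lower bounds for minimal degrees of $\sym(n)$-characters with both first row and first column bounded.
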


We will only give a proof of the last lemma, since the proof of the first two are similar. A similar approach can be used to prove Lemma~\ref{lem:characters-dim-4} and Lemma~\ref{lem:characters-dim-5} using Table~\ref{tab:k4} and Table~\ref{tab:k5}, respectively.

\begin{proof}
	From Table \ref{tab:caseee}, one can easily observe that the degree of characters in   $S_n \cup T_n \cup U_n$ are strictly less than $2\binom{n}{6}$ when $n \geq 27$. 
	
	For the converse, we use a similar argument to the proof of \cite[Lemma~12.7.3]{godsil2016erdos} by using induction on $n$. Computation via \verb|Sagemath| \cite{sagemath} shows that the result in Lemma~\ref{lem:characters-dim-6} holds for $n=27,28$. Now, suppose that the lemma is true for $n$ and $n-1$, and let us prove the induction step. Let $\phi = \chi^\mu$, where $\mu \vdash n+1$, be an irreducible character of $\sym(n+1)$ of dimension less than $2\binom{n+1}{6}$. Denote by $\phi\downharpoonleft _{[n]}$ (resp., $\phi\downharpoonleft _{[n-1]}$) the restriction of $\phi$ to $\mathrm{Sym}(n)$ (resp., $\mathrm{Sym}(n-1)$).
	
	First, assume that $\phi\downharpoonleft _{[n]}$ has a constituent $\chi^\lambda$ such that $\lambda \in S_n\cup T_n \cup U_n$. From the branching rule (see \cite[ Theorem~2.8.3]{sagan2001symmetric}), $\mu$ is one of the partitions given in the second column of Table \ref{tab:poss}. As the dimension of any irreducible character corresponding to a partition in Table~\ref{tab:k6} is larger than $2\binom{n+1}{6}$ when $n \geq 27$, we deduce that $\mu$ is in $S_{n+1}\cup T_{n+1} \cup U_{n+1}$. Similarly, if $\phi\downharpoonleft _{[n]}$ has a constituent $\chi^\lambda$ such that $\lambda^\prime \in S_n\cup T_n \cup U_n$, then $\mu^\prime \in S_{n+1}\cup T_{n+1} \cup U_{n+1}$.
	
	Define $S_n^\prime = \{ \lambda^\prime \mid  \lambda \in S_n  \}$, $T^\prime_n = \left\{ \lambda^\prime \mid \lambda \in T_n \right\}$ and $U^\prime_n = \left\{ \lambda^\prime \mid \lambda \in U_n \right\}$. Assume that $\phi\downharpoonleft _{[n]}$ has no constituents listed in $S_n \cup T_n \cup U_n\cup S_n' \cup T_n' \cup \mathcal{U}_n'$ and suppose that $\phi\downharpoonleft _{[n]}$ has at least two constituents. 
	Then, the dimension of $\phi\downharpoonleft _{[n]}$ is at least $4\binom{n}{6}> 2\binom{n+1}{6}$, whenever $n \geq 27$. Hence, the dimension of $\phi$ is at least $4\binom{n}{6}>2\binom{n+1}{6}$. This is a contradiction.
	
	Finally, we may assume that $\phi\downharpoonleft _{[n]}$ is irreducible and is not in $S_n \cup T_n \cup U_n \cup S_n' \cup T_n' \cup U_n'$. From the branching rule, the Young diagram corresponding to $\phi$ (i.e. $\mu$) must be rectangular, that is, $\mu=[a^b]$ for some positive integers $a,b \geq 1$. Again, using the branching rule, we can deduce that the constituents of $\phi\downharpoonleft _{[n-1]}$ correspond to the two partitions $\lambda_0=[a^{b-1},a-2]$ and $\lambda_1=[a^{b-2},a-1,a-1]$. None of $\lambda_0$, $\lambda_1$, $\lambda_0'$ or $\lambda_1'$ is in $S_{n-1} \cup T_{n-1} \cup U_{n-1}$, when $n \geq 27$.
	Therefore, the degree of $\phi$ is at least $4\binom{n-1}{6} > 2\binom{n+1}{6}$ when $n \geq 27$. Again, this is a contradiction.
	This completes the proof.
\end{proof}

We make the following conjecture about the dimensions of the irreducible characters of $\sym(n)$ of degree at most $\binom{n}{k}$.
\begin{conj}
	For $k\geq 2$ fixed and $n$ large enough, if $\lambda = [n-a,\lambda_2,\ldots,\lambda_t] \vdash n$ such that $t<n-a$ and the irreducible character of $\sym(n)$ afforded by $\lambda$ has dimension $f^\lambda < \binom{n}{k}$, then $a \leq k-1$ or $\lambda \in \{ [n-k,k],[n-k,1^k] \}$.
\end{conj}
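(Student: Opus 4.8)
The plan is to prove the conjecture by an induction on $n$ mimicking the argument used for Lemma~\ref{lem:characters-dim-6}, together with an explicit estimate that the partitions claimed to be exceptional are precisely the ones whose Specht modules are small. First I would fix $k$ and, following the pattern of \cite[Lemma~12.7.3]{godsil2016erdos}, establish the easy direction: if $a \le k-1$ then $\lambda = [n-a,\lambda_2,\ldots,\lambda_t]$ with $\lambda_2 + \cdots + \lambda_t = a \le k-1$, and the hook-length formula (or the bound $f^{[n-a,\mu]} \le \binom{n}{a} f^\mu$ from the Littlewood--Richardson/branching combinatorics) gives $f^\lambda < \binom{n}{k}$ for $n$ large; likewise one checks directly that $f^{[n-k,k]} = \binom{n}{k} - \binom{n}{k-1} < \binom{n}{k}$ and $f^{[n-k,1^k]} = \binom{n-1}{k} < \binom{n}{k}$. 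This handles the ``$\Leftarrow$'' content and fixes the threshold $n_0(k)$ beyond which everything in the claimed list stays below $\binom{n}{k}$.

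For the converse — the substantive direction — I would argue as follows. Suppose $\lambda = [n-a, \lambda_2,\ldots,\lambda_t] \vdash n$ with $t < n-a$ (so $\lambda$ is not self-``conjugate-looking'', i.e.\ its first row is genuinely the longest part and $\lambda \ne \lambda'$ in the degenerate sense), with $a \ge k$ and $\lambda \notin \{[n-k,k],[n-k,1^k]\}$; I want $f^\lambda \ge \binom{n}{k}$. The key monotonicity input is that removing a box from the first row only decreases the dimension in a controlled way, while the ``width'' $a$ of the complement of the first row controls the size from below. Concretely, one shows $f^{[n-a,\lambda_2,\ldots,\lambda_t]} \ge f^{[n-1-a,\lambda_2,\ldots,\lambda_t]} \cdot \frac{n}{n-a}$ type inequalities are the wrong direction; instead the cleaner route is the one already used in the proof above: restrict to $\sym(n-1)$ via the branching rule, and use the inductive hypothesis at $n-1$ (and $n-2$) to force $\lambda$ into the exceptional list or to produce two constituents whose dimensions sum past $\binom{n}{k}$. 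The rectangular case $\mu = [c^d]$ must be peeled off separately exactly as in the Lemma~\ref{lem:characters-dim-6} proof, using that restricting twice to $\sym(n-2)$ yields partitions none of which can be tiny. The base cases $n = n_0(k), n_0(k)+1$ are to be disposed of by the same kind of \verb|Sagemath| computation invoked for $n = 27, 28$ above.

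The hard part, and the reason this is only a conjecture rather than a theorem in the paper, is making the inductive bookkeeping uniform in $k$. For fixed small $k$ the sets $S_n, T_n, U_n$ were found by an explicit finite search, and the branching-rule tables (Tables~\ref{tab:poss}, \ref{tab:k6}, etc.) were checked by hand; for general $k$ one needs a structural lemma saying: if $\lambda \vdash n$ has $f^\lambda < \binom{n}{k}$ and $\lambda$ is not in the conjectured list, then \emph{every} way of removing an inner corner lands in a partition still below (roughly) $2\binom{n-1}{k}$, and removing a first-row box also stays small — so that the two-or-more-constituents argument closes. Quantifying ``small enough that two copies still fit'' requires an absolute lower bound of the form $f^\lambda \ge \frac{1}{c_k}\binom{n}{a}$ whenever the first part of $\lambda$ is $n-a$ and $\lambda$ is not of the two special shapes; proving such a bound with an explicit, $n$-independent constant $c_k$ — ideally via the hook-length formula applied to the ``thickened'' part $[\lambda_2,\ldots,\lambda_t]$ together with $\lambda_1 = n-a$ — is the crux. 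I expect that once this lower bound is in hand, the induction runs exactly as in the $k \le 5$ cases, with the rectangular partitions $[c^d]$ being the only genuinely delicate family (since they are the unique shapes whose $\sym(n-1)$-restriction is irreducible, the single-constituent escape route), and they are handled by descending to $\sym(n-2)$ where the restriction necessarily splits.
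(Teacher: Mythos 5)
This statement is left as an open \emph{conjecture} in the paper: the authors prove the analogous classifications only for $k\le 6$-type thresholds (Lemmas~\ref{lem:characters-dim-4}--\ref{lem:characters-dim-6}) and explicitly do not prove the general statement, so there is no ``paper proof'' to match. Your submission, by your own admission, is also not a proof: it is a strategy outline patterned on the induction of Lemma~\ref{lem:characters-dim-6} (branching rule, two-or-more-constituents dimension count, separate treatment of rectangular shapes, computer-checked base cases), and the step that would make it a theorem is precisely the one you flag as open. Concretely, the induction cannot close without (i) a uniform-in-$n$ lower bound of the shape $f^\lambda \ge \tfrac{1}{c_k}\binom{n}{a}$ for every $\lambda$ with first part $n-a$, $a\ge k$, outside the two exceptional shapes, and (ii) uniform branching bookkeeping replacing the finite, hand-checked tables (the analogues of Tables~\ref{tab:poss} and \ref{tab:k6}) that were only verified for $k\le 5$; neither is established, and without an explicit threshold $n_0(k)$ the ``Sagemath base cases'' are not even well defined. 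The preliminary ``easy direction'' you verify (that the listed partitions do have dimension below $\binom{n}{k}$ for large $n$) is correct but is not part of the conjecture's content, which is a single implication.

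So the verdict is: genuine gap. The approach you sketch is the natural one and is consistent with how the $k\in\{4,5,6\}$ cases are handled in the paper, but as written it reduces the conjecture to an unproved quantitative lemma rather than proving it; turning the sketch into a proof would require establishing that lower bound (e.g.\ via the hook-length formula with an explicit constant $c_k$ independent of $n$) and then re-running the branching induction with it, which is exactly the work the paper leaves open.
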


\subsection{Permutation character}
Let $G \leq \sym(\Omega)$ be a finite permutation group. Recall that the \emph{permutation character} of $G$ is the character $\fix$, which is the map from $G$ to $\mathbb{C}$ such that
\begin{align*}
	\fix(g) &= | \left\{ \omega \in \Omega \mid \omega^g = \omega \right\}|,
\end{align*}
for $g\in G$. That is, $\fix(g)$ is the number of fixed points of $g \in G$. We present the following straightforward observation about the permutation character of $G$.
\begin{prop}
	Let $G\leq \sym(\Omega)$ be a finite permutation group. Assume that the irreducible constituents of $\fix$ are $\chi_0,\chi_1,\ldots,\chi_\ell$, where $\chi_0$ is the trivial character of $G$ \footnote{The trivial character of $G$ is always a constituent of $\fix$ since $\langle \fix,\chi_0\rangle = \frac{1}{|G|} \sum_{g\in G}\fix(g)$ is the number of orbits of $G$.}, with multiplicities $m_0,m_1,\ldots,m_\ell$, respectively. Then, we have
	\begin{align*}
		\sum_{i=1}^\ell m_i\chi_i(1) = |\Omega|-m_0.
	\end{align*}
\end{prop}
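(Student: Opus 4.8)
The plan is to evaluate the character equation $\langle \fix, \fix \rangle$ in two different ways, or more directly, to simply compute the dimension (degree at the identity) of the permutation character. Recall that $\fix$ decomposes as $\fix = \sum_{i=0}^{\ell} m_i \chi_i$, where the $\chi_i$ are the distinct irreducible constituents and $m_i \geq 1$ their multiplicities, with $\chi_0$ the trivial character and $m_0$ the number of orbits of $G$ on $\Omega$ (as noted in the footnote, $m_0 = \langle \fix, \chi_0 \rangle$).

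The key step is to evaluate both sides of the decomposition at the identity element $1 \in G$. On the left-hand side, $\fix(1) = |\{\omega \in \Omega \mid \omega^1 = \omega\}| = |\Omega|$, since the identity fixes every point. On the right-hand side, $\left(\sum_{i=0}^\ell m_i \chi_i\right)(1) = \sum_{i=0}^\ell m_i \chi_i(1)$, and since $\chi_0$ is the trivial character we have $\chi_0(1) = 1$, so this equals $m_0 + \sum_{i=1}^\ell m_i \chi_i(1)$. Equating the two expressions gives
\begin{align*}
	|\Omega| = m_0 + \sum_{i=1}^\ell m_i \chi_i(1),
\end{align*}
and rearranging yields $\sum_{i=1}^\ell m_i \chi_i(1) = |\Omega| - m_0$, which is exactly the claimed identity.

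I do not expect any real obstacle here: the statement is, as the paper says, a "straightforward observation," and the only facts needed are that $\fix(1) = |\Omega|$, that a character's value at the identity is its degree (dimension), that degrees are additive under taking direct sums of representations, and that the trivial character has degree $1$. All of these are standard and are consequences of the basic representation theory of finite groups recalled above. The one point worth stating carefully is that the $\chi_i$ for $i \geq 1$ are nontrivial and pairwise distinct, so that the decomposition is genuinely the isotypic decomposition and $m_0$ really is the full multiplicity of the trivial constituent; this is already built into the hypotheses of the proposition.
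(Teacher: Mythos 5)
Your proof is correct and is essentially identical to the paper's: both evaluate the decomposition $\fix = \sum_{i=0}^\ell m_i\chi_i$ at the identity, use $\fix(1)=|\Omega|$ and $\chi_0(1)=1$, and rearrange. No issues.
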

\begin{proof}
	Since $\fix = \sum_{i=0}^\ell m_i \chi_i$, we have 
	\begin{align*}
		|\Omega|=\fix(1) &= \sum_{i=0}^\ell m_i \chi_i(1) = m_0 + \sum_{i=1}^\ell m_i \chi_i(1).
	\end{align*}
\end{proof}

In the next lemma, we give the irreducible constituents of the permutation character of the group $\sym(n)$ acting on the $k$-subsets of $[n]$. A proof of this lemma is given in \cite[Proposition~1.6~(a)]{godsil2010multiplicity}.

\begin{lem}
	The permutation character of $\sym(n)$ acting on the $k$-subsets of $[n]$ is given by
	\begin{align*}
		\sum_{i=0}^k\chi^{[n-i,i]}.
	\end{align*}
\end{lem}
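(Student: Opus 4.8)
The plan is to decompose the permutation character of $\sym(n)$ on $k$-subsets of $[n]$ as a sum of characters of induced-from-Young-subgroup modules, and then invoke the classical Young's rule to read off the multiplicities. Write $\Omega_k$ for the set of $k$-subsets of $[n]$ (with $k \le n/2$, which is the only interesting range). The action of $\sym(n)$ on $\Omega_k$ is the action on cosets of $\sym(k) \times \sym(n-k)$, so its permutation character is exactly the permutation module $M^{[n-k,k]} = \operatorname{Ind}_{\sym(n-k)\times\sym(k)}^{\sym(n)} \mathbf{1}$. Young's rule (see Sagan \cite{sagan2001symmetric}) tells us that $M^{[n-k,k]} = \bigoplus_{\mu} K_{\mu,[n-k,k]} S^\mu$, where $K_{\mu,\nu}$ is the Kostka number counting semistandard Young tableaux of shape $\mu$ and content $\nu$.

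First I would observe that for the two-row content $\nu = [n-k,k]$, a semistandard tableau of shape $\mu$ exists if and only if $\mu$ dominates $\nu$, and $\mu$ has at most two rows — indeed entries are only $1$'s and $2$'s, and in a column entries must strictly increase, so no column has length exceeding $2$. Hence $\mu = [n-i,i]$ for some $i$ with $0 \le i \le k$. Next I would show that for each such $\mu = [n-i,i]$ the Kostka number $K_{[n-i,i],[n-k,k]}$ equals exactly $1$: the first row must contain all $n-k$ of the $1$'s (since $1$'s cannot appear in the second row, as the column would then repeat a $1$) together with $(n-i)-(n-k) = k-i$ of the $2$'s, and the second row is forced to be $i$ copies of $2$; this is the unique filling and it is semistandard because $n-i \ge i$ guarantees the shape is a valid partition and the rows are weakly increasing. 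Therefore $M^{[n-k,k]} = \bigoplus_{i=0}^{k} S^{[n-i,i]}$, which on the level of characters gives $\fix = \sum_{i=0}^k \chi^{[n-i,i]}$.

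Alternatively — and this is perhaps the cleaner route to present — one can argue by induction on $k$ using the combinatorial identity for permutation modules: $M^{[n-k,k]} \cong M^{[n-k+1,k-1]} \oplus S^{[n-k,k]}$ for $2k \le n$. This follows from the fact that the number of orbits of $\sym(k)\times\sym(n-k)$ on $j$-subsets, compared with those of $\sym(k-1)\times\sym(n-k+1)$, produces exactly one extra trivial constituent at the bottom, or equivalently from the standard branching/restriction computation; combined with the base case $M^{[n]} = S^{[n]}$ (the trivial module, $k=0$) this telescopes to the claimed formula. I would pick whichever of these two arguments the ambient text prefers, but since the paper already cites \cite{godsil2010multiplicity} for the statement, the expected write-up is short: cite Young's rule, note the Kostka numbers are all $0$ or $1$ in this range and identify exactly which partitions give $1$.

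The main obstacle — really the only point requiring care — is the uniqueness claim $K_{[n-i,i],[n-k,k]} = 1$ and, relatedly, making sure no partition with three or more rows sneaks in; both are handled by the column-strictness constraint forcing all content-$1$ entries into the first row, but one should state this cleanly rather than wave at it. Everything else is bookkeeping: confirming that $i$ ranges over $\{0,1,\dots,k\}$ and not further (it cannot, since $[n-i,i]$ must dominate $[n-k,k]$, forcing $i \le k$, and must be a partition, which for $2k \le n$ is automatic), and matching the indexing with the statement of the lemma.
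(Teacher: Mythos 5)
Your argument is correct. Note that the paper does not actually prove this lemma: it simply cites \cite[Proposition~1.6~(a)]{godsil2010multiplicity}, so your Young's-rule computation is a genuine, self-contained proof rather than a restatement of the paper's argument. The main steps are all in place: identifying the permutation character with the character of $M^{[n-k,k]}=\operatorname{Ind}_{\sym(k)\times\sym(n-k)}^{\sym(n)}\mathbf{1}$, invoking Young's rule, observing that semistandardness with content in $\{1,2\}$ forces at most two rows and pushes every $1$ into the first row, and concluding that $K_{[n-i,i],[n-k,k]}=1$ precisely for $0\leq i\leq k$ (dominance gives $i\leq k$, and $2k\leq n$ ensures the relevant shapes are partitions); this is the standard proof, as in \cite{sagan2001symmetric}. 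One caveat: your second, ``cleaner'' alternative is not a proof as sketched. The isomorphism $M^{[n-k,k]}\cong M^{[n-k+1,k-1]}\oplus S^{[n-k,k]}$ is true, but counting orbits of Young subgroups only controls the multiplicity of the trivial constituent (by Frobenius reciprocity/Burnside), not the full decomposition, so ``one extra trivial constituent at the bottom'' does not justify it; if you want that telescoping route you should either deduce the isomorphism from Young's rule itself (at which point it is redundant) or cite it explicitly. Presenting only your first argument, or simply citing \cite{godsil2010multiplicity} as the paper does, would be the cleanest write-up.
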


\section{Main idea of the proof}\label{sect:idea}

 We recall the following well-known propositions (see \cite{godsil2001algebraic} for details).

\begin{prop}
	If $X=(V,E)$ is a $d$-regular graph, then the number of components of $X$ is equal to the multiplicity of the largest eigenvalue $d$.\label{prop:components}
\end{prop}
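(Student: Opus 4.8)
\textbf{Proof plan for Proposition~\ref{prop:components}.}
The plan is to use the standard spectral decomposition of the adjacency matrix $A$ of $X$ together with the Perron--Frobenius theory for connected regular graphs. First I would recall that since $X$ is $d$-regular, the all-ones vector $\one$ is an eigenvector with eigenvalue $d$, and $d=\max_v\deg(v)$ is the largest eigenvalue in absolute value; in particular every eigenvalue $\theta$ of $A$ satisfies $|\theta|\le d$. The key point is to show that the multiplicity of $d$ as an eigenvalue equals the number of connected components of $X$.

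The main step is a reduction to the connected case. Write the connected components of $X$ as $X_1,\dots,X_c$ on vertex sets $V_1,\dots,V_c$. Ordering the vertices so that each component is a contiguous block, the adjacency matrix becomes block diagonal, $A=A_1\oplus A_2\oplus\cdots\oplus A_c$, where $A_i$ is the adjacency matrix of $X_i$. Hence the spectrum of $A$ (with multiplicities) is the union of the spectra of the $A_i$, and the multiplicity of $d$ as an eigenvalue of $A$ is the sum over $i$ of the multiplicity of $d$ as an eigenvalue of $A_i$. Each $X_i$ is itself $d$-regular (the degrees are unchanged by restricting to a component), so it suffices to prove that for a \emph{connected} $d$-regular graph the eigenvalue $d$ is simple; then each term in the sum is exactly $1$ and the total is $c$.

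For the connected case I would argue as follows. Suppose $x$ is a real eigenvector of $A_i$ with eigenvalue $d$, and let $u$ be a vertex where $|x_u|$ is maximal; replacing $x$ by $-x$ if needed, assume $x_u>0$ is this maximum value. From $(A_i x)_u = d\, x_u$ we get $\sum_{w\sim u} x_w = d\, x_u$, i.e. the average of the $x_w$ over the $d$ neighbours $w$ of $u$ equals $x_u$, which is their common maximum; therefore $x_w=x_u$ for every neighbour $w$ of $u$. Since $X_i$ is connected, iterating this argument along paths shows $x$ is constant on all of $V_i$, so $x$ is a scalar multiple of the all-ones vector on $V_i$. Hence the eigenspace of $d$ in $A_i$ is one-dimensional, completing the argument. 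The only mildly delicate point — and the place to be careful — is the maximum-principle step for the connected graph, namely ensuring the chosen eigenvector is real (which holds as $A_i$ is a real symmetric matrix) and that the ``maximum at a vertex forces equality at neighbours'' propagates through the whole component via connectivity; everything else is bookkeeping with the block-diagonal structure.
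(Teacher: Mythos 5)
Your argument is correct and is exactly the standard proof: the paper itself gives no proof of Proposition~\ref{prop:components}, deferring to \cite{godsil2001algebraic}, where the same block-diagonal reduction plus the maximum-principle argument showing that $d$ is a simple eigenvalue of each connected component appears. Nothing is missing; the delicate point you flag (realness of the eigenvector and propagation of equality along paths) is handled correctly.
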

\begin{prop}
	Let $\cay(G,C)$ be a Cayley graph. Then, $\cay(G,C)$ is connected if and only if $\langle C\rangle = G$. \label{prop:connected-cay}
\end{prop}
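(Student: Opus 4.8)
The plan is to analyze the connected components of $\cay(G,C)$ directly, identifying each component with a right coset of $\langle C\rangle$. Since $C$ is inverse-closed, $\langle C\rangle$ coincides with the set of all finite products $c_m c_{m-1}\cdots c_1$ of elements of $C$ (the empty product giving the identity), and this is the algebraic fact I will lean on.

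First I would describe walks in the graph. By the definition of $\cay(G,C)$, a vertex $h$ is adjacent to a vertex $g$ precisely when $hg^{-1}\in C$, i.e.\ the neighbours of $g$ are exactly the elements $cg$ with $c\in C$. Hence a walk $g = g_0 \sim g_1 \sim \cdots \sim g_m$ corresponds to a choice of $c_1,\ldots,c_m\in C$ with $g_i = c_i g_{i-1}$, so that $g_m = c_m c_{m-1}\cdots c_1\, g$. It follows that the set of vertices reachable from $g$ is exactly $\langle C\rangle g$. In particular the connected components of $\cay(G,C)$ are the right cosets of $\langle C\rangle$ in $G$, and there are $[G:\langle C\rangle]$ of them, each of size $|\langle C\rangle|$.

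From this the equivalence is immediate: $\cay(G,C)$ is connected if and only if it has a single component, if and only if $[G:\langle C\rangle] = 1$, i.e.\ $\langle C\rangle = G$. There is no real obstacle here; the only point requiring a word of care is the observation that every element of $\langle C\rangle$ is a genuine product of members of $C$ (not merely of $C\cup C^{-1}$), which holds because $C=C^{-1}$, and the bookkeeping that a closed walk at $g$ returns exactly the coset $\langle C\rangle g$. This description of the components will also be reused together with Proposition~\ref{prop:components} in the sequel, since it says the multiplicity of the eigenvalue $|C|$ of $\cay(G,C)$ equals $[G:\langle C\rangle]$.
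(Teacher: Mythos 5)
Your argument is correct: identifying the walks from $g$ with left multiplication by products of elements of $C$ (using $C=C^{-1}$ so that $\langle C\rangle$ is exactly the set of such products) shows the component of $g$ is the coset $\langle C\rangle g$, which gives the equivalence at once. The paper itself offers no proof of this proposition — it is stated as well known with a citation to \cite{godsil2001algebraic} — and your proof is precisely the standard argument from that source, so there is nothing to correct; your closing remark that the number of components, and hence the multiplicity of the eigenvalue $|C|$ via Proposition~\ref{prop:components}, equals $[G:\langle C\rangle]$ is exactly how the proposition is used later in the paper.
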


In the next proposition, we prove that the eigenvalues  corresponding to the partitions $\lambda\vdash n$ and its transpose $\lambda^\prime \vdash n$ of a normal Cayley graph of the symmetric group whose connection set consists only of even permutations, are equal. 

\begin{prop}
	Let $n\geq 4$ and let  $\cay(\sym(n),C)$ be a normal Cayley graph. Assume that $A = \sum_{\mu \vdash n} \omega_\mu A_\mu$ is a weighted adjacency matrix of $\cay(\sym(n),C)$\footnote{That is, if $\lambda \vdash n$ is such that $\omega_\lambda\neq 0$, then the conjugacy class with cycle type $\lambda$ is a subset of $C$}. If the values of the characters corresponding to $[n] $ and $ [1^n]$ are equal on the set 
	\begin{align*}
	T = \bigcup_{\{ \lambda\vdash n \mid \omega_\lambda \neq 0 \}}\left\{ \sigma \in \sym(n) \mid \sigma \mbox{ has cycle type }\lambda \right\},
	\end{align*}
	then for all $\lambda \vdash n$, the eigenvalues of $A$ afforded by $\lambda$ and $\lambda^\prime$ are equal.
\end{prop}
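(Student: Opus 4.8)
The plan is to use Lemma~\ref{lem:eigenvalues-scheme}, which gives the eigenvalue afforded by an irreducible character $\chi$ of a weighted adjacency matrix $A = \sum_i \omega_i A_{D_i}$ as
\begin{align*}
	\lambda_\chi(A) = \frac{1}{\chi(id)}\sum_i \omega_i |D_i| \chi(g_i),
\end{align*}
where $g_i$ ranges over representatives of the derangement classes. Rewriting this sum over all partitions $\mu\vdash n$ with $\omega_\mu\neq 0$ and a representative $\sigma_\mu$ of cycle type $\mu$, the eigenvalue afforded by the Specht module $S^\lambda$ is $\frac{1}{f^\lambda}\sum_\mu \omega_\mu |C_\mu|\chi^\lambda(\sigma_\mu)$. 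So the whole proposition reduces to comparing, for each $\mu$ with $\omega_\mu\neq 0$, the quantity $\frac{\chi^\lambda(\sigma_\mu)}{f^\lambda}$ with $\frac{\chi^{\lambda'}(\sigma_\mu)}{f^{\lambda'}}$, since $|C_\mu|$ and $\omega_\mu$ are common factors.

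The key identity is the standard relation $S^{\lambda'} = S^{[1^n]}\otimes S^\lambda$ already recalled in the excerpt, which gives $\chi^{\lambda'} = \chi^{[1^n]}\cdot \chi^\lambda$ as class functions, i.e. $\chi^{\lambda'}(\sigma) = \mathrm{sgn}(\sigma)\,\chi^\lambda(\sigma)$ for every $\sigma\in\sym(n)$, where $\mathrm{sgn} = \chi^{[1^n]}$ is the sign character. It also gives $f^{\lambda'} = f^\lambda$ (taking $\sigma = id$). Hence for any $\mu$ with representative $\sigma_\mu$,
\begin{align*}
	\frac{\chi^{\lambda'}(\sigma_\mu)}{f^{\lambda'}} = \frac{\mathrm{sgn}(\sigma_\mu)\,\chi^\lambda(\sigma_\mu)}{f^\lambda}.
\end{align*}
So the two eigenvalues afforded by $\lambda$ and $\lambda'$ agree precisely when, for each $\mu$ contributing to the sum (i.e. each $\mu$ with $\omega_\mu\neq 0$, equivalently each cycle type appearing in the set $T$), we have $\mathrm{sgn}(\sigma_\mu) = 1$, or else $\chi^\lambda(\sigma_\mu) = 0$. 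The hypothesis of the proposition is exactly that $\chi^{[n]}$ and $\chi^{[1^n]}$ agree on $T$; since $\chi^{[n]}$ is the trivial character (constant value $1$) and $\chi^{[1^n]} = \mathrm{sgn}$, this says precisely that $\mathrm{sgn}(\sigma) = 1$ for every $\sigma\in T$, i.e. every cycle type $\mu$ with $\omega_\mu\neq 0$ consists of even permutations. Therefore every contributing term satisfies $\mathrm{sgn}(\sigma_\mu) = 1$, so $\chi^{\lambda'}(\sigma_\mu)/f^{\lambda'} = \chi^\lambda(\sigma_\mu)/f^\lambda$ term by term, and summing against $\omega_\mu|C_\mu|$ gives equality of the two eigenvalues.

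There is essentially no serious obstacle here; the only thing to be careful about is bookkeeping: making sure the sum over derangement conjugacy classes in Lemma~\ref{lem:eigenvalues-scheme} is correctly re-indexed as a sum over the cycle types $\mu$ with $\omega_\mu\neq 0$ (the $A_\mu$ with $\mu$ not a derangement type contribute $0$ to $A$ by hypothesis, and the derangement condition is irrelevant to the algebraic identity), and that $T$ as defined is exactly the union of those conjugacy classes so the hypothesis translates cleanly into "$\mathrm{sgn}\equiv 1$ on $T$". I would write the proof in three short steps: (1) invoke $\chi^{\lambda'} = \mathrm{sgn}\cdot\chi^\lambda$ and $f^{\lambda'} = f^\lambda$; (2) observe that the hypothesis forces $\mathrm{sgn}(\sigma) = 1$ for all $\sigma\in T$; (3) plug into the eigenvalue formula of Lemma~\ref{lem:eigenvalues-scheme} and compare term by term.
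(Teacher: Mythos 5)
Your proposal is correct and follows essentially the same route as the paper's proof: the hypothesis forces every element of $T$ to be even, and the identity $S^{\lambda^\prime} = S^{[1^n]}\otimes S^\lambda$ (equivalently $\chi^{\lambda^\prime} = \mathrm{sgn}\cdot\chi^\lambda$ with $f^{\lambda^\prime}=f^\lambda$) then gives term-by-term equality of the eigenvalues via Lemma~\ref{lem:eigenvalues-scheme}. You merely spell out the bookkeeping that the paper leaves implicit.
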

\begin{proof}
	Since $\chi^{[n]}$ and $\chi^{[1^n]}$ agree on $T$, every element of $T$ is an even permutation. The rest of the proof is an immediate consequence of $S^{\lambda^\prime} = S^{[1^n]} \otimes S^\lambda$.
\end{proof}

To prove Theorem~\ref{thm:main-sym} and Theorem~\ref{thm:main-alt}, we use weighted adjacency matrices from the conjugacy class scheme of $\sym(n)$. In particular, we will assign weights on the conjugacy classes of $k$-derangements in $\sym(n)$. Such a weighted adjacency matrix will correspond to a spanning subgraph of the graph $\Gamma_{n,k}$ which is a Cayley graph $\cay(\sym(n), C)$, where $C$ is a union of conjugacy classes of $k$-derangements. Since $\langle C\rangle$ is a normal subgroup of $\sym(n)$ and $n\geq 5$, we conclude that $\alt(n) \leq \langle C\rangle \leq \sym(n)$. In fact, it is easy to see that $\langle C \rangle  = \alt(n)$ if and only if the conjugacy classes of $k$-derangements in $C$ consist of even permutations.

Hence, we deduce that $\cay(\sym(n),C)$ has at most two components. By Proposition~\ref{prop:components} and Proposition~\ref{prop:connected-cay}, the multiplicity of the largest eigenvalue is equal to $2$ whenever $\langle C\rangle = \alt(n)$ and equal to $1$ if $\langle C\rangle = \sym(n)$. In the case that $\langle C\rangle = \alt(n)$, by vertex-transitivity of a Cayley graph, one can see that $\cay(\sym(n),C)$ is equal to a disjoint union of two copies of $\cay(\alt(n),C)$. Therefore,
\begin{align}
	\alpha \left(\cay(\sym(n), C)\right) = 2 \alpha \left(\cay(\alt(n),C)\right).\label{eq:identity-cocliques}
\end{align}
Moreover, the spectrums of $\cay(\sym(n),C)$ and $\cay(\alt(n),C)$ are equal up to multiplicity. In particular, $\theta$ is an eigenvalue with multiplicity $\ell$ in $\cay(\alt(n),C)$ if and only if $\theta$ is an eigenvalue with multiplicity $2\ell$ in $\cay(\sym(n),C)$. 

Consequently, if $d$ and $\tau$ are the largest and smallest eigenvalues of a weighted adjacency matrix corresponding to $\cay(\sym(n),C)$, respectively, then the weighted ratio bound (see Lemma~\ref{lem:wRatioBound}) on $\cay(\sym(n),C)$ yields
\begin{align}
	\alpha(\cay(\sym(n),C)) \leq \frac{n!}{1 - \frac{d}{\tau}}.\label{eq:bound-sym}
\end{align}
If the multiplicity of $d$ is equal to $2$ (or equivalently, $C$ consists of even permutations), then by \eqref{eq:identity-cocliques}, we have
\begin{align}
	\alpha(\cay(\alt(n),C)) \leq\frac{n!}{2\left(1 - \frac{d}{\tau}\right)} =  \frac{\frac{n!}{2}}{1 - \frac{d}{\tau}}.\label{eq:bound-alt}
\end{align}

Therefore, if the largest eigenvalue of a weighted adjacency matrix of $\cay(\sym(n),C)$ has multiplicity equal to $2$, then we obtain a bound on the size of the maximum coclique of $\cay(\alt(n),C)$. This is the main idea of our proof.

Our proof is almost similar to the proof of the main result in \cite{behajaina20203} and \cite{meagher2021erdHos} in the sense that we use eigenvalue methods and the representation theory of the symmetric group. However, the technique used in our proof is more refined for we obtain Theorem~\ref{thm:main-alt}. Let $n\geq 6$ and $2k<n$. We will find a weighted adjacency matrix $A$ which is a linear combination of matrices in the conjugacy class scheme of $\sym(n)$ consisting of $k$-derangements and having the following properties.
\begin{enumerate}[(i)]
	\item The largest eigenvalue of $A$ is equal to $\binom{n}{k}-1$ and is afforded by the irreducible characters corresponding to $[n]$ and $[1^n]$,\label{first}
	\item the smallest eigenvalue of $A$ which is equal to $-1$ is afforded by the characters corresponding to the partitions $[n-i,i]$ for $i\in \{1,2,\ldots,k\}$,\label{second}
	\item All other eigenvalues of $A$ are at least $-1$.\label{third}
\end{enumerate}
The weighted adjacency matrix $A$ that we will choose will also have the additional property that most of the eigenvalues corresponding to irreducible characters of dimension less than $\binom{n}{k-1}$ are equal to $-1$.

It is clear that any graph $X = \cay(\sym(n),C)$ corresponding to the weighted adjacency matrix $A$ (i.e., $X$ is a spanning subgraph of $\Gamma_{n,k}$) satisfying \eqref{first} must be disconnected into two components. Let $Y = \cay(\alt(n),C)$ be the component of $X$ containing the identity of $\sym(n)$. Since \eqref{second} and \eqref{third} are also satisfied, we deduce from \eqref{eq:bound-sym} and \eqref{eq:bound-alt} that 
\begin{align*}
	\alpha(X) \leq k!(n-k)! \mbox{ and } \alpha(Y) \leq \frac{1}{2}k!(n-k)!.
\end{align*}

For each $k\in \{4,5\}$, we will utilize the above-mentioned method to prove Theorem~\ref{thm:main-sym}. We will then obtain Theorem~\ref{thm:main-alt}, for $k\in \{4,5\}$, as explained earlier in \eqref{eq:bound-alt}. The case $k = 3$ for Theorem~\ref{thm:main-alt} will be considered separately.

\section{$3$-setwise for $\alt(n)$}

In this section, we prove the case $k=3$ of Theorem~\ref{thm:main-alt}. We recall the following theorem.
\begin{thm}[\cite{behajaina20203}]
	For any $n\geq 4$, if $\mathcal{F} \subset \sym(n)$ acting on the $3$-subsets of $[n]$, then $|\mathcal{F}|\leq 6(n-3)!$.\label{thm:3-setwise}
\end{thm}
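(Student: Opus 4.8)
The plan is to prove Theorem~\ref{thm:3-setwise} by means of the weighted ratio bound, Lemma~\ref{lem:wRatioBound}, following the general scheme described in Section~\ref{sect:idea} with $k=3$. By the reduction recalled in the introduction, a $3$-setwise intersecting set $\mathcal{F}\subseteq\sym(n)$ is the same thing as a coclique of the derangement graph $\Gamma_{n,3}=\cay(\sym(n),D_{n,3})$, so it is enough to prove $\alpha(\Gamma_{n,3})\le 6(n-3)!=\frac{n!}{\binom{n}{3}}$. The first step is to describe $D_{n,3}$ explicitly: a permutation with cycle type $\lambda$ is a $3$-derangement exactly when no sub-multiset of the parts of $\lambda$ sums to $3$, i.e.\ when $\lambda$ has no part equal to $3$ and, in addition, either has no part equal to $1$, or has exactly one or two parts equal to $1$ and no part equal to $2$. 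One then looks for a weighted adjacency matrix $A=\sum_{\mu}\omega_\mu A_\mu$, the sum ranging over these cycle types, realizing the three properties listed in Section~\ref{sect:idea}: the largest eigenvalue of $A$ equals $\binom{n}{3}-1$ and is afforded by $\chi^{[n]}$ and $\chi^{[1^n]}$; the smallest eigenvalue equals $-1$ and is afforded by the nontrivial constituents $\chi^{[n-1,1]},\chi^{[n-2,2]},\chi^{[n-3,3]}$ of the permutation character of $\sym(n)$ acting on $3$-subsets; and every other eigenvalue is at least $-1$. Feeding $d=\binom{n}{3}-1$ and $\tau=-1$ into Lemma~\ref{lem:wRatioBound} then gives $\alpha(\Gamma_{n,3})\le\frac{n!}{1-d/\tau}=\frac{n!}{\binom{n}{3}}$, as wanted.

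To determine the weights I would use Lemma~\ref{lem:eigenvalues-scheme}, according to which the eigenvalue afforded by $\chi^\lambda$ is $\eta_\lambda=\frac{1}{f^\lambda}\sum_\mu\omega_\mu|C_\mu|\chi^\lambda_\mu$. The trivial character yields the normalization $\sum_\mu\omega_\mu|C_\mu|=\binom{n}{3}-1$, and the identity $S^{\lambda'}=S^{[1^n]}\otimes S^\lambda$ makes the eigenvalue afforded by $[1^n]$ agree with the one afforded by $[n]$ as soon as $A$ is supported on classes of even permutations (this is exactly the proposition of Section~\ref{sect:idea}, and it is also what is needed to deduce the $\alt(n)$ statement through \eqref{eq:bound-alt}). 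I would then impose $\eta_{[n-1,1]}=\eta_{[n-2,2]}=\eta_{[n-3,3]}=-1$, together with $\eta_\lambda=-1$ for every remaining partition $\lambda$ of ``small'' dimension. By an argument identical to the proof of Lemma~\ref{lem:characters-dim-4}, the partitions $\lambda$ with $f^\lambda<\binom{n}{3}$ form a short explicit list (namely $[n],[n-1,1],[n-2,2],[n-2,1^2],[n-3,3],[n-3,1^3]$ and their transposes, once $n$ is past a small threshold), so these are finitely many linear equations in the unknowns $\omega_\mu$ whose coefficients are polynomials in $n$; a solution can be found using only a handful of classes carrying nonzero weight, and one then checks that the resulting matrix has the prescribed named eigenvalues with the correct multiplicities.

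The crux of the argument --- precisely the difficulty the authors point out for $k\in\{4,5\}$ --- is property \eqref{third}: one must show $\eta_\lambda\ge-1$ for every $\lambda$ with $f^\lambda\ge\binom{n}{3}$. For these ``large'' characters I would estimate $|\eta_\lambda|\le\frac{1}{f^\lambda}\sum_\mu|\omega_\mu|\,|C_\mu|\,|\chi^\lambda_\mu|$ and exploit that every cycle type $\mu$ in the support of $A$ moves all but at most two of the $n$ points, so that the normalized character values $|\chi^\lambda_\mu|/f^\lambda$ are small; quantitatively one invokes the standard bounds on $|\chi^\lambda(\sigma)|$ in terms of the number of points moved by $\sigma$ (of the type used in the proof of \cite[Lemma~12.7.3]{godsil2016erdos} and in \cite{larose2004stable,behajaina20203}), together with the fact that the weights $\omega_\mu$ are bounded by a polynomial in $n$. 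This forces $|\eta_\lambda|<1$, hence $\eta_\lambda\ge-1$, for all sufficiently large $n$; tracking the constants pins down a threshold $n_0$.

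Finally, for the finitely many values $n<n_0$ --- including the degenerate range $2k\ge n$, where $\Gamma_{n,3}$ is a small explicit graph (and, via complementation of $3$-subsets, its action reduces to a natural action already covered by known results) --- I would compute the spectrum of $\Gamma_{n,3}$ directly and read off $\alpha(\Gamma_{n,3})\le 6(n-3)!$, as is done for the base cases in the proof of Lemma~\ref{lem:characters-dim-6}. Combining the weighted ratio bound for $n\ge n_0$ with this finite verification yields $|\mathcal{F}|\le 6(n-3)!$ for all $n\ge4$, which is Theorem~\ref{thm:3-setwise}; the $\alt(n)$ case $\rho(A_3)=1$ then follows from \eqref{eq:bound-alt} because $A$ was chosen to be supported on even permutations.
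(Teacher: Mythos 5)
Your overall strategy --- reduce to a coclique bound for $\Gamma_{n,3}$, build a weighted adjacency matrix supported on conjugacy classes of $3$-derangements, force the eigenvalue afforded by $[n]$ to be $\binom{n}{3}-1$ and those afforded by $[n-i,i]$, $1\le i\le 3$, to be $-1$, and conclude via Lemma~\ref{lem:wRatioBound} --- is exactly the strategy of the proof in \cite{behajaina20203}; note that the present paper does not reprove this theorem but only quotes it and recalls the matrices \eqref{eq:odd-adj} and \eqref{eq:weighted_adjacency_matrix_even} from that source. The genuine gap is in your treatment of property \eqref{third}. You place the dichotomy at dimension $\binom{n}{3}$ and claim that the triangle inequality plus $O(1)$ character values forces $|\eta_\lambda|<1$ for every $\lambda$ with $f^\lambda\ge\binom{n}{3}$ once $n$ is large. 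This cannot work as stated: the top eigenvalue is the row sum, so $\sum_\mu \omega_\mu|C_\mu|=\binom{n}{3}-1$, hence $\sum_\mu |\omega_\mu|\,|C_\mu|\ge\binom{n}{3}-1$ (strictly larger once any weight is negative, which happens in the weightings actually used), and some admissible classes carry character values $2$ or $3$. Consequently, for $\lambda$ with $f^\lambda$ of the same order as $\binom{n}{3}$ --- e.g.\ $[n-3,2,1]$, of dimension $n(n-2)(n-4)/3\approx 2\binom{n}{3}$ --- your estimate gives a ratio tending to a constant at least $1$, and enlarging $n$ does not help, since numerator and denominator both grow like $n^3$; with the even-$n$ weights recalled in this paper the bound is $(\alpha+2\omega_2)/f^\lambda\approx \tfrac{7}{6}\alpha/f^\lambda$, which exceeds $1$ throughout this intermediate range.

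The repair, which is what \cite{behajaina20203} does and what Section~5 of this paper reproduces, is to draw the line at $\binom{n}{4}$: classify all $\lambda$ with $f^\lambda<\binom{n}{4}$ (Lemma~\ref{lem:characters-dim-4}; this list also contains $[n-3,2,1]$, $[n-3,1^3]$, $[n-4,4]$, $[n-4,1^4]$ and their transposes, which your explicit list, cut off at $\binom{n}{3}$, misses), compute those eigenvalues exactly from the character table, and apply the crude estimate only when $f^\lambda>\binom{n}{4}$, where it yields $O(n^3)/\binom{n}{4}<1$. Two smaller imprecisions: what makes $|\chi^\lambda(C_\mu)|=O(1)$ is not that the classes have at most two fixed points but that each chosen class contains a cycle of length $n-a$ with $a$ bounded, via the Murnaghan--Nakayama rule (fixed-point-free classes made of many short cycles can have large character values); and your requirement that the support consist only of even permutations is not needed for the $\sym(n)$ statement --- the original weights use odd classes as well --- it is the refinement this paper adds to reach $\alt(n)$, and imposing it is not free: as Section~5 shows, the natural specialization falls outside the original polytope and the eigenvalue bounds must be re-verified.
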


The proof of this theorem involves a weighted adjacency matrix. 
In this section, we extend Theorem~\ref{thm:3-setwise} for the alternating group. We prove the following.
\begin{thm}
	For any $n\geq 7$, if $\mathcal{F} \subset \alt(n)$ acting on the $3$-subsets of $[n]$, then $|\mathcal{F}| \leq 3(n-3)!$.\label{thm:3-setwise-alt}
\end{thm}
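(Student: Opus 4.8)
The plan is to deduce Theorem~\ref{thm:3-setwise-alt} from Theorem~\ref{thm:3-setwise} together with the ``multiplicity $2$'' trick described in Section~\ref{sect:idea}, by inspecting the weighted adjacency matrix used in \cite{behajaina20203} to prove Theorem~\ref{thm:3-setwise}. Concretely, the bound $6(n-3)! = 3!(n-3)!$ in Theorem~\ref{thm:3-setwise} is exactly $\frac{n!}{1-d/\tau}$ with $d=\binom{n}{3}-1$ and $\tau = -1$, so the weighted matrix $A = \sum_\mu \omega_\mu A_\mu$ achieving it must have largest eigenvalue $d$ and smallest eigenvalue $\tau$. The first task is therefore to recall that matrix $A$ (a linear combination of classes $A_\mu$ with $\mu$ a $3$-derangement cycle type of $\sym(n)$) and to check which conjugacy classes carry nonzero weight.

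The key step is then to verify that \emph{all} $3$-derangement conjugacy classes appearing in $A$ with nonzero weight consist of even permutations; equivalently, that $\chi^{[n]}$ and $\chi^{[1^n]}$ agree on the support $T$ of $A$. If this holds, then by Proposition~\ref{prop:connected-cay} and the discussion in Section~\ref{sect:idea} the connection set $C = \bigcup_{\omega_\mu\neq 0} C_\mu$ generates $\alt(n)$, so $X = \cay(\sym(n),C)$ is a disjoint union of two copies of $Y = \cay(\alt(n),C)$, the largest eigenvalue $d$ of $A$ has multiplicity $2$, and \eqref{eq:bound-alt} gives $\alpha(Y) \leq \frac{n!/2}{1-d/\tau} = \frac{1}{2}\cdot 6(n-3)! = 3(n-3)!$. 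Since any intersecting (i.e.\ $3$-setwise intersecting) family $\mathcal{F}\subset \alt(n)$ is a coclique in the derangement graph $\Gamma_{\alt(n)}$, which is a spanning subgraph of $Y$ (restricting to $3$-derangements in $C$), we get $|\mathcal{F}| = \alpha(\Gamma_{\alt(n)}) \leq \alpha(Y) \leq 3(n-3)!$, as desired. The hypothesis $n\geq 7$ (versus $n\geq 4$ in Theorem~\ref{thm:3-setwise}) should be exactly what is needed to guarantee that the relevant even $3$-derangement classes are nonempty and that $\langle C\rangle$ is genuinely $\alt(n)$ rather than something smaller.

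If it turns out that the original matrix $A$ from \cite{behajaina20203} has odd permutations in its support, the plan is instead to build a new weighted adjacency matrix supported only on even $3$-derangement classes, imposing conditions \eqref{first}--\eqref{third} of Section~\ref{sect:idea}: normalize so the row sum (the $[n]$-eigenvalue) is $\binom{n}{3}-1$, force the eigenvalues attached to $[n-1,1]$, $[n-2,2]$, $[n-3,3]$ to equal $-1$ using Lemma~\ref{lem:eigenvalues-scheme} and the character values of $\sym(n)$ on the even $3$-derangement classes, and then check the remaining eigenvalues are $\geq -1$. Because the even $3$-derangement cycle types of $\sym(n)$ for small-to-moderate $n$ form a short explicit list (cycle types with no sub-partition of $3$, having an even number of even-length cycles), this is a finite linear-algebra computation; one fixes the weights for general $n$ from the small cases and verifies the eigenvalue inequalities via the known low-dimensional character formulas.

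The main obstacle I expect is the same one flagged in the paper for the $k\in\{4,5\}$ cases: controlling the eigenvalues coming from the low-dimensional irreducible characters (those indexed by $[n-a,\ldots]$ with $a$ small but $\lambda\notin\{[n-i,i]\}$, and their transposes) and showing none of them drops below $-1$. For $k=3$ the list of such characters is short — essentially $[n-2,1^2]$, $[n-3,2,1]$, $[n-3,1^3]$ and their transposes plus $[n-3,3]$, $[n-3,1^3]$ — so the calculation is tractable, but the asymptotic estimates of $\chi^\lambda$ on the even $3$-derangement classes must be done carefully, and handling the boundary values of $n$ (between $7$ and wherever the asymptotics kick in) will likely require a direct \verb|Sagemath| check. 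If, as is most likely, the matrix in \cite{behajaina20203} already uses only even derangements, then none of this is needed and the proof reduces to the one-line application of \eqref{eq:bound-alt} described above.
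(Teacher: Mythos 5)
Your primary branch rests on a hope that turns out to be false: the weighted adjacency matrix of \cite{behajaina20203} is \emph{not} supported only on even permutations. For $n$ odd it uses the classes $(n),(n-2,1^2),(n-2,2),(n-5,4,1),(n-1,1)$, of which $(n-2,2)$ and $(n-1,1)$ are odd; for $n$ even it uses $(n-5,5),(n-6,2^3),(n-6,4,1^2),(n-6,4,2),(n-6,5,1)$, of which $(n-6,4,2)$ and $(n-6,5,1)$ are odd; and the weights form a two-parameter family that is generically nonzero on the odd classes. So the ``one-line'' application of \eqref{eq:bound-alt} is not available, and all the weight falls on your fallback plan. That fallback is, in outline, exactly what the paper does: specialize the parameters so that the odd-class weights vanish (for $n$ odd take $(t_0,s_0)=(0,\gamma-\delta)$, for $n$ even $(t_0,s_0)=(0,0)$), check that the top eigenvalue $\binom{n}{3}-1$ is afforded by both $\chi^{[n]}$ and $\chi^{[1^n]}$ so it has multiplicity $2$, and verify the least eigenvalue is still $-1$, then apply \eqref{eq:bound-alt} together with the Sagemath check for $7\leq n\leq 25$.

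The gap is that your proposal leaves precisely this verification undone, and it is not a formality. In the odd case one gets it cheaply because the specialized point still lies in the polytope \eqref{eq:polytope-odd} of \cite{behajaina20203}; but in the even case the point $(0,0)$ lies \emph{outside} the polytope \eqref{eq:polytope}, so the eigenvalue bounds of that paper cannot simply be quoted and must be re-proved for the new weights: a Murnaghan--Nakayama-type bound on $|\chi^\lambda|$ for the classes $(n-5,5),(n-6,2^3),(n-6,4,1^2)$ to control all $\lambda$ with $f^\lambda>\binom{n}{4}$, plus explicit computation of the eigenvalues for the low-dimensional characters of Lemma~\ref{lem:characters-dim-4} --- a list that also contains $[n-4,4]$ and $[n-4,1^4]$, which are missing from the short list you give, and whose eigenvalues are not $-1$ and must be checked to be $\geq -1$ separately. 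Without these computations (or an equivalent construction of an even-supported matrix satisfying \eqref{first}--\eqref{third}), the claimed bound $3(n-3)!$ is not established. One further small slip: $Y=\cay(\alt(n),C)$ is a spanning subgraph of $\Gamma_{\alt(n)}$, not the reverse as you wrote; the inequality $\alpha(\Gamma_{\alt(n)})\leq\alpha(Y)$ that you actually use is nevertheless the correct one.
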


To prove this theorem, we use the idea given in Section~\ref{sect:idea}. 
\subsection{Small cases}
 First, we consider the small cases with \verb|Sagemath|. For $7 \leq n \leq 25$, we verified that there exists a weighted adjacency matrix of $\Gamma_{n,3}$ that satisfies \eqref{first}, \eqref{second} and \eqref{third}. The \verb|Sagemath| code for this is available in \cite{link}.

We give two different weighted adjacency matrices in the next two subsections, depending on the parity of $n$.
\subsection{A weighted adjacency matrix for $n$ odd}
First, we recall the weighted adjacency matrix used in \cite{behajaina20203} and then we use this matrix to prove Theorem~\ref{thm:3-setwise-alt}

The weights that were used, for $n$ odd, in the proof of Theorem~\ref{thm:3-setwise} are given next.
Consider the weighted adjacency matrix
\begin{align}
A &= x_1 A_{(n)} +x_2 A_{(n-2,1^2)} +x_3 A_{(n-2,2)} + x_4 A_{(n-5,4,1)} + x_5 A_{(n-1,1)}.\label{eq:odd-adj}
\end{align}

Let $C_1:= C_{(n)},\ C_2 := C_{(n-2,1^2)} ,\ C_3 := C_{(n-2,2)},\ C_4 := C_{(n-5,4,1)}$, and $C_5 := C_{(n-1,1)}$. Let $\alpha= \binom{n}{3}-1$ and let $\beta,\gamma$, and $\delta$ be, respectively, the degree of the Specht modules corresponding to $\ [n-1,1],\ [n-2,2]$, and $[n-3,3]$. That is, $\beta = (n-1)$, $\gamma = \binom{n}{2}-n$, and $\delta = \binom{n}{3} - \binom{n}{2}$.

Using Lemma~\ref{lem:eigenvalues-scheme}, it is easy to see that the eigenvalues corresponding to $A$ are of the form
\begin{align}
\xi_{\lambda}(s,t) &= \frac{1}{f^\lambda} \sum_{i=1}^5 x_i |C_i| \chi^\lambda(C_i)\label{eq:general_eigenvalues_odd},
\end{align}
where $\lambda$ runs through all partitions of $n$. For any $i\in \{1,2,\ldots,5\}$, define $\omega_i:= |C_i|x_i$.

For $n\geq 27$ odd, the weights  used in \cite{behajaina20203} are

\begin{align}
\begin{cases}
\omega_1(s,t) &= -s- t + (\beta +\gamma)\\
\omega_2(s,t) &= -\frac{1}{2}s-\frac{1}{2} t + \frac{1}{2} (\alpha -\beta)\\
\omega_3(s,t) &= \frac{1}{2}s + \frac{1}{2}t + \frac{1}{2} (\alpha -\beta) -\gamma\\
\omega_4(s,t) &= s\\
\omega_5(s,t) &= t
\end{cases}\label{eq:solution_odd}
\end{align}
where $(t,s) \in \mathbb{R}^2$ belongs to the polytope
\begin{align}
\begin{split} 
\left\{ 
\begin{aligned}
& 3x + y  < \beta +\gamma, \\
&-\frac{n(n-2)(n-4)}{3} < y-x \leq \beta + \gamma - \binom{n-1}{3}, \\
&\beta + \gamma - \binom{n-1}{3} \leq x + y < \beta + \gamma .
\end{aligned}
\right.
\end{split} 	\label{eq:polytope-odd}
\end{align}

When $n$ is odd, the conjugacy classes with cycle type $(n-2,1^2), \ (n),\ (n-5,4,1)$ consist of  even permutations, whereas the conjugacy classes with cycle type $(n-2,2)$ and $(n-1,1)$ consist of odd permutations. It is clear that for the same weighted adjacency matrix given in \eqref{eq:odd-adj} with the weights in \eqref{eq:solution_odd} to satisfy \eqref{first}, \eqref{second} and \eqref{third}, the following table needs to be satisfied (the \checkmark means that the weight need not be equal to $0$).

\begin{table}[H]
	\begin{tabular}{|c|c|c|c|c|c|}
		\hline
		Parity of $n$ & $(n)$ & $(n-2,1^2)$& $(n-2,2)$& $(n-5,4,1)$& $(n-1,1)$ \\ \hline 
		Odd & \checkmark & \checkmark & $0$ & \checkmark & $0$\\ \hline
	\end{tabular}
	\caption{Desired weights for $n$ odd}\label{tab:desired-weight-odd}
\end{table}

Now, we use the previous table to prove Theorem~\ref{thm:3-setwise-alt} for $n$ odd. We let $t_0 = 0$ and $s_0 = \gamma -\delta$. We have
\begin{align}
\begin{cases}
\omega_1(s_0,t_0) &= \beta +\delta\\
\omega_2(s_0,t_0) &= \delta\\
\omega_3(s_0,t_0) &= 0\\
\omega_4(s_0,t_0) &= \gamma - \delta\\
\omega_5(s_0,t_0) &= 0
\end{cases}\label{eq:desired-weight-odd}
\end{align}
It is clear that \eqref{eq:desired-weight-odd} satisfies Table~\ref{tab:desired-weight-odd}. Now, we verify that $(t_0,s_0)$ belongs to the polytope in \eqref{eq:polytope-odd}. It is clear that the first halfspace in \eqref{eq:polytope-odd}  contains $(t_0,s_0)$. By noting that $s_0 = \gamma -\delta = \beta +\gamma - \binom{n-1}{3}$, it is also clear that $(t_0,s_0)$ belongs to the last two halfspaces of \eqref{eq:polytope-odd}.

Therefore, the eigenvalues of the weighted adjacency matrix $A$ given in \eqref{eq:odd-adj} for $(t,s) = (t_0,s_0)$ are in the interval $\left[ -1,\binom{n}{3}-1 \right]$. Moreover, the eigenvalue corresponding to the irreducible character $\chi^{[n]} $ is $\binom{n}{3}-1$ and the eigenvalues corresponding to $\chi^{[n-1,1]},\ \chi^{[n-2,2]}$ and $\chi^{[n-3,3]}$ are all equal to $-1$. By \cite[Equation~7]{behajaina20203}, the eigenvalue corresponding to the character $\chi^{[1^n]}$ is
\begin{align*}
(-1)^{n-1} (-s_0-3t_0 + \beta +2\gamma ) = \delta + \beta +\gamma = \binom{n}{3}-1.
\end{align*}
Therefore, the multiplicity of the largest eigenvalue of $A$ is equal to $2$.

\subsection{A weighted adjacency matrix for $n$ even}
In this section we prove Theorem~\ref{thm:3-setwise-alt} for $n\geq 20$ even. Similar to the proof for $n$ odd, we use the weighted adjacency matrix in \cite{behajaina20203} and we will give a specific weighting corresponding to a disconnected (spanning) subgraph of $\Gamma_{n,3}$. 

The weighted adjacency matrix used in \cite{behajaina20203} for the action of the symmetric group $\sym(n)$ on $3$-subsets of $[n]$ is 
\begin{align}
A &= x_1 A_{(n-5,5)} + x_2 A_{(n-6,2^3)} + x_3 A_{(n-6,4,1^2)} + x_4 A_{(n-6,4,2)} + x_5 A_{(n-6,5,1)}.\label{eq:weighted_adjacency_matrix_even}
\end{align}

We define $C_1 := C_{(n-5,5)},\ C_2 := C_{(n-6,2^3)},\ C_3:= C_{(n-6,4,1^2)},\ C_4 := C_{(n-6,4,2)}$, and $C_5 := C_{(n-6,5,1)}$. 
By Lemma~\ref{lem:eigenvalues-scheme}, the eigenvalues corresponding to $A$ are of the form
\begin{align}
\xi_{\lambda}(s,t) &= \frac{1}{f^\lambda} \sum_{i=1}^5 x_i |C_i| \chi^\lambda(C_i),\label{eq:general_eigenvalues_even}
\end{align}
where $\lambda \vdash n$.

For $i\in \{1,2,3,4,5\}$, we define $\omega_i = |C_i| x_i$.  Let $\alpha = \binom{n}{3}-1$ and let $\beta,\gamma, \mbox{ and }\delta$ be respectively the degree of the irreducible characters $\chi^{[n-1,1]},\ \chi^{[n-2,2]}$, and $ \chi^{[n-3,3]}$. 
The weighting functions used in \cite{behajaina20203} are $\omega_i$ such that
\begin{align}
\begin{cases}
\omega_1(s,t) &= -\frac{2}{3}t - \frac{2}{3}s  + \frac{1}{3}\alpha +  \frac{2}{3}\beta + \frac{\gamma}{3},\\
\omega_2(s,t) &= \frac{1}{6}t - \frac{1}{3}s  + \frac{1}{6}(\alpha - \beta) - \frac{\gamma}{3},\\
\omega_3(s,t) &= -\frac{1}{2}t + \frac{1}{2}(\alpha - \beta),\\
\omega_4(s,t) &= s,\\
\omega_5(s,t) &= t\label{eq:weights-even}
\end{cases}
\end{align}
where $(t,s)$ belongs to the polytope defined by
\begin{align}
\begin{cases}
&2x + 2y \leq  \binom{n}{3},\\
&x - y \geq 0,\\
& \beta + \gamma - \binom{n-1}{2} \leq x \leq \beta + \gamma + \binom{n-1}{2},\\
&y\geq 0.
\end{cases}
\label{eq:polytope}
\end{align}

It is easy to see that if $n$ is even, then the conjugacy classes with cycle type $(n-5,5), (n-6,2^3), \mbox{ and } (n-6,4,1^2)$ consist of even permutations, whereas the ones with cycle type $(n-6,4,2)$ and $(n-6,5,1)$ are odd permutations. Therefore, we would like the weights in \eqref{eq:weighted_adjacency_matrix_even} to satisfy the following table (the \checkmark means that the weights need not be equal to $0$).
\begin{table}[H]
	\begin{tabular}{|c|c|c|c|c|c|}
		\hline
		Parity of $n$ & $(n-5,5)$ & $(n-6,2^3)$& $(n-6,4,1^2)$& $(n-6,4,2)$& $(n-6,5,1)$ \\ \hline 
		Even & \checkmark & \checkmark & \checkmark & $0$ & $0$ \\ \hline
	\end{tabular}
	\caption{Desired weights for $n$ even}
\end{table}

Let $s_0 = t_0 = 0$. Consider the weights 
\begin{align}
\begin{cases}
\omega_1(s_0,t_0) &= \frac{1}{3}\alpha +  \frac{2}{3}\beta + \frac{\gamma}{3},\\
\omega_2(s_0,t_0) &= \frac{1}{6}(\alpha - \beta) - \frac{\gamma}{3},\\
\omega_3(s_0,t_0) &=  \frac{1}{2}(\alpha - \beta),\\
\omega_4(s_0,t_0) &= 0,\\
\omega_5(s_0,t_0) &= 0.\label{eq:altweights-even}
\end{cases}
\end{align}

Note that in contrast to the previous subsection, $(t_0,s_0)$ does not belong to the polytope given in \eqref{eq:polytope}. 
Fortunately, we can prove that these weights still work. We first prove that the eigenvalues afforded by the irreducible characters of dimension at least $\binom{n}{4}$ is bounded from below by $-1$.

\begin{lem}
	Let $n\geq 8$ and $\lambda \vdash n$. If $f^\lambda > \binom{n}{4}$, then $|\xi_\lambda| < 1$.
\end{lem}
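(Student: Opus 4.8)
The plan is to estimate the eigenvalue
\[
\xi_\lambda = \frac{1}{f^\lambda}\Bigl( \omega_1 \chi^\lambda(C_1) + \omega_2 \chi^\lambda(C_2) + \omega_3 \chi^\lambda(C_3) \Bigr),
\]
evaluated at the specific weights in \eqref{eq:altweights-even}, and show that it exceeds $-1$ whenever $f^\lambda > \binom{n}{4}$. The key point is that the weights $\omega_1,\omega_2,\omega_3$ are all of order $n^3$ (more precisely they are linear combinations of $\alpha,\beta,\gamma$, each $O(n^3)$, with $\omega_1$ the dominant positive term), while the conjugacy classes $C_1 = C_{(n-5,5)}$, $C_2 = C_{(n-6,2^3)}$, $C_3 = C_{(n-6,4,1^2)}$ each have size $|C_i|$ of order $n^5$, $n^6$, $n^6$ respectively. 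Since $\omega_i = |C_i| x_i$, the actual scalars $x_i$ appearing in \eqref{eq:weighted_adjacency_matrix_even} are tiny; but more usefully, the eigenvalue bound will come from a character ratio bound. I would first invoke a uniform bound of the form $|\chi^\lambda(C_i)| \le B_i(n) \, f^\lambda / (\text{something growing})$ — concretely, I expect to use that for a permutation $\sigma$ with many fixed points (each of $C_1, C_2, C_3$ is a permutation moving only a bounded number, namely $5$, $6$, $6$ points), the Frobenius-type / Fomin–Lulov-type bound gives $|\chi^\lambda(\sigma)| \le C\, f^\lambda / n^{m}$ for $\lambda$ of large dimension, where $m$ is roughly the number of moved points. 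Since here the number of moved points is $\ge 5$, and $f^\lambda > \binom{n}{4}$ forces $\lambda$ to be "thick" (in the sense of Lemmas~\ref{lem:characters-dim-4}–\ref{lem:characters-dim-6}, not one of the explicitly listed near-trivial shapes), the ratio $|\chi^\lambda(C_i)|/f^\lambda$ should be $O(n^{-5})$ or smaller.

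Then I would combine this with the size estimate on the weights: $|\omega_i| = O(n^3)$. Multiplying, $|\omega_i \chi^\lambda(C_i)|/f^\lambda = O(n^{3-5}) = O(n^{-2}) \to 0$, so the sum of the three terms is $o(1)$, in particular strictly between $-1$ and $1$ for $n$ large; one then checks the remaining finitely many small cases $8 \le n \le$ (bound) directly — though since the lemma only claims $n \ge 8$, I would in practice appeal to the \verb|Sagemath| verification already cited for the small-case range, or tighten the constants. Actually a cleaner route: rather than going through asymptotic order-of-magnitude bounds, I would mimic exactly the argument in \cite{behajaina20203} — there the eigenvalues afforded by characters of dimension $\ge \binom{n}{4}$ were controlled on the full polytope \eqref{eq:polytope}, and the point $(s_0,t_0) = (0,0)$ lies in the (closed) region where their estimate still applies even if it is outside the polytope \eqref{eq:polytope} proper; so the needed inequality $|\xi_\lambda| < 1$ for $f^\lambda > \binom{n}{4}$ is essentially inherited from the corresponding bound in \cite{behajaina20203}, whose proof I would re-examine to confirm it uses only $s,t$ bounded (which $(0,0)$ certainly satisfies), not the full polytope constraints.

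The main obstacle I anticipate is precisely the borderline nature of the weights: $(s_0,t_0)=(0,0)$ sits \emph{outside} the polytope \eqref{eq:polytope} of \cite{behajaina20203}, so I cannot simply quote their result as a black box. The real work is to show that the polytope conditions in \eqref{eq:polytope} were used only to control the eigenvalues coming from the \emph{low}-dimensional characters (those with $f^\lambda \le \binom{n}{4}$, i.e.\ the ones we want pinned at $-1$ or at the top eigenvalue $\binom{n}{3}-1$), and that for the \emph{high}-dimensional characters the bound $|\xi_\lambda|<1$ holds on a much larger region that includes the origin. So the plan is: (1) recall the explicit formulas for $\chi^\lambda(C_i)$, $i=1,2,3$, via the Murnaghan–Nakayama rule or the explicit polynomial-in-$n$ formulas for characters on classes with few moved points (as in \cite{godsil2016erdos}); (2) substitute the weights \eqref{eq:altweights-even}; (3) bound $f^\lambda$ from below using the hypothesis $f^\lambda > \binom{n}{4}$ together with the classification in Lemmas~\ref{lem:characters-dim-4}–\ref{lem:characters-dim-6} of which partitions have small dimension, so that $\lambda \notin S_n$ and $\lambda' \notin S_n$; (4) conclude the ratio is in $(-1,1)$, handling $n \le $ some explicit bound by the computer verification already mentioned and $n$ beyond that by the asymptotic estimate.
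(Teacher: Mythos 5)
There is a genuine gap, and it sits in the key technical input you rely on. You describe the classes $C_1=C_{(n-5,5)}$, $C_2=C_{(n-6,2^3)}$, $C_3=C_{(n-6,4,1^2)}$ as permutations ``moving only a bounded number, namely $5$, $6$, $6$ points,'' and on that basis invoke a Frobenius/Fomin--Lulov-type bound $|\chi^\lambda(\sigma)|\le C\,f^\lambda/n^m$ for permutations with many fixed points. But these cycle types are partitions of $n$: each such permutation contains an $(n-5)$- or $(n-6)$-cycle and has at most two fixed points (indeed they are $3$-derangements, so they cannot have bounded support). The fixed-point character-ratio bound you quote therefore does not apply to these classes, and with it goes the justification for ``$|\chi^\lambda(C_i)|/f^\lambda=O(n^{-5})$''; your estimate $|\omega_i\chi^\lambda(C_i)|/f^\lambda=O(n^{-2})$ is unsupported as written. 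The correct, and much more elementary, input is the opposite phenomenon: precisely because each class contains a single very long cycle, a Young diagram with $n$ cells admits at most one rim hook of that length, so the Murnaghan--Nakayama rule gives $|\chi^\lambda(x)|\le 1$ on $C_1,C_3$ and $|\chi^\lambda(x)|\le 3$ on $C_2$, uniformly in $\lambda$ (this is \cite[Lemma~6.1]{behajaina20203}, which the paper cites). With the weights \eqref{eq:altweights-even} nonnegative, the triangle inequality then yields $|\xi_\lambda|\le(\omega_1+3\omega_2+\omega_3)/f^\lambda=(\alpha+2\omega_2)/f^\lambda<7\alpha/\bigl(6\tbinom{n}{4}\bigr)<1$ for $n\ge 8$, an explicit bound needing no asymptotics and no supplementary computer check for small $n$, in contrast with your plan.

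Your fallback route --- re-examining \cite{behajaina20203} to confirm that the polytope \eqref{eq:polytope} was used only to control the low-dimensional eigenvalues, so that the high-dimensional bound survives at $(s_0,t_0)=(0,0)$ --- correctly identifies the real obstacle (the origin lies outside the polytope), but it is stated as an intention to ``re-examine'' rather than as an argument, so it does not close the gap either. The overall shape of your plan (bound the character values on the three classes, use $|\omega_i|=O(n^3)$ and $f^\lambda>\tbinom{n}{4}$) is the right one and matches the paper in spirit; what is missing is the correct reason the character values are small, which comes from the long cycle, not from fixed points.
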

\begin{proof}
	It was proved in \cite[Lemma~6.1]{behajaina20203} that $|\chi^\lambda(x)| \in \{0,1\}$ if $x$ has cycle type $(n-5,5)$ or $(n-6,4,1^2)$. Moreover, $|\chi^\lambda (x)| \in \{0,1,2,3\}$ if $x$ has cycle type $(n-6,2^3)$. Using these information on the character values, we can bound the eigenvalues afforded by irreducible characters of dimension at least $\binom{n}{4}$ as follows.
	\begin{align*}
		|\xi_\lambda| &\leq \frac{1}{f^\lambda} \left( |\omega_{1}| + 3|\omega_{2}| + |\omega_{3}| \right)\\
		&=\frac{1}{f^\lambda} \left( \omega_{1} + 3\omega_{2} + \omega_{3} \right)\\
		&= \frac{1}{f^\lambda} \left( \alpha +2\omega_2 \right)\\
		&<\frac{7 \alpha}{6\binom{n}{4}} < 1, \mbox{ for } n\geq 8 .
	\end{align*}
\end{proof}

\begin{lem}
	If $f^\lambda < \binom{n}{4}$, then $\xi_\lambda \geq -1$.
\end{lem}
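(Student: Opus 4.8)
The plan is to handle, one at a time, each of the finitely many partitions $\lambda$ (up to conjugation) with $f^\lambda < \binom{n}{4}$; by Lemma~\ref{lem:characters-dim-4} these are exactly the partitions in $S_n$ and their transposes, namely (ignoring transposes for now) $[n]$, $[n-1,1]$, $[n-2,2]$, $[n-2,1^2]$, $[n-3,3]$, $[n-3,2,1]$, $[n-3,1^3]$, $[n-4,4]$, $[n-4,1^4]$, together with $[1^n]$, $[2,1^{n-2}]$, etc. For each such $\lambda$ I would compute $\xi_\lambda = \xi_\lambda(0,0)$ explicitly from \eqref{eq:general_eigenvalues_even} with the weights \eqref{eq:altweights-even}, i.e.
\begin{align*}
	\xi_\lambda = \frac{1}{f^\lambda}\bigl( \omega_1 \chi^\lambda(C_1) + \omega_2 \chi^\lambda(C_2) + \omega_3 \chi^\lambda(C_3) \bigr),
\end{align*}
where $\omega_1,\omega_2,\omega_3$ are the explicit polynomials in $n$ from \eqref{eq:altweights-even} (with $\alpha=\binom{n}{3}-1$, $\beta=n-1$, $\gamma=\binom{n}{2}-n$), and $C_1,C_2,C_3$ are the classes of cycle type $(n-5,5)$, $(n-6,2^3)$, $(n-6,4,1^2)$. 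The character values $\chi^\lambda(C_i)$ for these low-dimensional $\lambda$ can be read off via the Murnaghan--Nakayama rule (or taken from the tables in \cite{behajaina20203}, since exactly the same classes appear there), and are low-degree polynomials in $n$; the class sizes $|C_i|$ are standard. This reduces the lemma to finitely many polynomial identities/inequalities in $n$.

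The key sanity checks built into the construction are that $\xi_{[n]}=\xi_{[1^n]}=\binom{n}{3}-1$ (the weights were designed so the row sum is $\binom{n}{3}-1$, and since $C_1,C_2,C_3$ are all even permutations when $n$ is even, $\chi^{[1^n]}$ agrees with $\chi^{[n]}$ on $T$, so the Proposition on transpose eigenvalues gives equality), and that $\xi_{[n-1,1]}=\xi_{[n-2,2]}=\xi_{[n-3,3]}=-1$ — these three equalities are precisely what the solution \eqref{eq:weights-even} was engineered to force at the specialization $(s,t)=(0,0)$, so they follow by the same computation done in \cite{behajaina20203} (one checks $\frac{1}{f^\lambda}\sum \omega_i\chi^\lambda(C_i) = -1$ for each). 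That disposes of $[n]$, $[1^n]$, $[n-1,1]$, $[2,1^{n-2}]$, $[n-2,2]$, $[2^2,1^{n-4}]$, $[n-3,3]$, $[3,1^{n-3}]$ (the last via the transpose Proposition, since $[3,1^{n-3}]=[n-3,3]'$ and all relevant classes are even). It remains to check the partitions $[n-2,1^2]$, $[n-3,2,1]$, $[n-3,1^3]$, $[n-4,4]$, $[n-4,1^4]$ and their transposes; for each I would plug the character values into the displayed formula for $\xi_\lambda$, clear denominators, and verify the resulting inequality $\xi_\lambda \ge -1$ holds for all even $n$ in the relevant range (with the handful of genuinely small $n$ already covered by the \verb|Sagemath| verification mentioned for $7\le n\le 25$, so one only needs $n$ large). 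Transposes $\lambda'$ are handled simultaneously with $\lambda$ by the Proposition whenever all three classes are even in $\sym(n)$ for even $n$, which they are.

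I expect the main obstacle to be the partitions whose corresponding character has dimension close to $\binom{n}{4}$, in particular $[n-4,4]$ and $[n-3,1^3]$ (and $[n-4,1^4]$): here $f^\lambda$ is of order $n^4$ while $\omega_1$ is of order $n^3$ and the character values $\chi^\lambda(C_i)$, though small, do not vanish, so one must check that the numerator $\omega_1\chi^\lambda(C_1)+\omega_2\chi^\lambda(C_2)+\omega_3\chi^\lambda(C_3)$ is actually $\ge -f^\lambda$, i.e. that no accidental near-cancellation pushes $\xi_\lambda$ below $-1$. Since $|\xi_\lambda| \le \frac{1}{f^\lambda}(|\omega_1|+3|\omega_2|+|\omega_3|) = \frac{1}{f^\lambda}(\alpha+2\omega_2) < \frac{7\alpha}{6 f^\lambda}$ by the crude bound from the previous lemma, and $f^\lambda \ge c\,\binom{n}{4}$ for these $\lambda$ with $c$ a fixed constant slightly less than $1$, the inequality $\frac{7\alpha}{6f^\lambda} \le 1$ only barely holds (or barely fails) for small $n$, so the clean estimate is not quite enough and one genuinely has to use the signs/exact values of $\chi^\lambda(C_i)$ rather than their absolute values. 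Concretely, I would tabulate $\chi^\lambda(C_i)$ for these five partitions, substitute, and simplify to a rational function of $n$ that is manifestly $\ge -1$ for $n\ge 8$ (or $n\ge 20$, matching the stated hypothesis), falling back on the finite \verb|Sagemath| check for the remaining small values.
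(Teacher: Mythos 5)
Your proposal follows essentially the same route as the paper: restrict to the partitions in $S_n$ of Lemma~\ref{lem:characters-dim-4} (transposes being handled by the equality of eigenvalues for $\lambda$ and $\lambda^\prime$ since all three weighted classes are even), and then compute each $\xi_\lambda = \frac{1}{f^\lambda}\left(\omega_1\chi^\lambda(C_1)+\omega_2\chi^\lambda(C_2)+\omega_3\chi^\lambda(C_3)\right)$ exactly from the character values rather than by absolute-value bounds. The explicit evaluations you defer are precisely what the paper carries out via Table~\ref{tab:3-even}, yielding $\xi_{[n-2,1^2]},\xi_{[n-4,4]},\xi_{[n-4,1^4]}>0$, $\xi_{[n-3,2,1]}=0$ and $\xi_{[n-3,1^3]}>-1$, so your plan is correct as stated.
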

\begin{table}[t]
	\centering
	\begin{tabular}{ccccc}
		\hline
		& &&&\\
		& & $(n-5)$ & $(n-6,2^3)$ & $(n-6,4,1^2)$ \\
		Representation & Dimension &&&\\
		\hline
		$\left[n\right]$ & $1$ & $1$ & $1$ & $1$ \\
		$\left[n-1, 1\right]$ & $n-1$ & $-1$ & $-1$ & $1$ \\
		$\left[n-2, 2\right]$ & $\binom{n}{2}-n$ & $0$ & $3$ & $-1$ \\
		$\left[n-2, 1^2\right]$ & $\binom{n-1}{2}$ & $1$ & $-2$ & $0$ \\
		$\left[n-3, 3\right]$ & $\binom{n}{3} - \binom{n}{2}$ & $0$ & $-3$ & $-1$ \\
		$\left[n-3, 2, 1\right]$ & $\frac{n(n-2)(n-4)}{3}$ & $0$ & $0$ & $0$ \\
		$\left[n-3, 1^3\right]$ & $\binom{n-1}{3}$ & $-1$ & $2$ & $0$ \\
		$\left[n-4, 4\right]$ & $\binom{n}{4}-\binom{n}{3}$ & $0$ & $3$ & $1$ \\
		$\left[n-4, 1^4\right]$ & $\binom{n-1}{4}$ & $1$ & $1$ & $-1$ \\
	\end{tabular}
	\caption{Character table for the irreducible characters of low dimension.}\label{tab:3-even}
\end{table}
\begin{proof}
	The irreducible characters of degree less than $\binom{n}{4}$ are given in Lemma~\ref{lem:characters-dim-4}. As explained in the previous section, since the eigenvalues afforded by $\lambda \vdash n$ and its transpose are equal we only need to compute the eigenvalues corresponding to the partitions in the set $S_n$ given in Lemma~\ref{lem:characters-dim-4}. Using  Table~\ref{tab:3-even}, we compute directly the eigenvalues as follows. 
	\begin{align*}
		\xi_{[n]} &= \alpha\\
		\xi_{[n-1,1]} &= \xi_{[n-2,2]} = \xi_{[n-3,3]} = -1\\
		\xi_{[n-2,1^2]} &= \frac{\omega_{1} - 2\omega_{2}}{\binom{n-1}{2}} = \frac{\binom{n}{2}-1}{\binom{n-1}{2}} >0\\
		\xi_{[n-3,2,1]} &= 0\\
		\xi_{[n-3,1^3]} &= \frac{1 - \binom{n}{2}}{\binom{n-1}{3}} > -1\\
		\xi_{[n-4,4]} &= \frac{3\omega_{2}+\omega_{3}}{\binom{n}{4} - \binom{n}{3}} = \frac{\alpha - \beta - \gamma}{\binom{n}{4} - \binom{n}{3}} = \frac{\binom{n}{3} - \binom{n}{2}}{\binom{n}{4} - \binom{n}{3}} >0 \\
		\xi_{[n-4,1^4]} &= \frac{\omega_{1}+\omega_{2} - \omega_{3}}{\binom{n-1}{4}} = \frac{\beta}{\binom{n-1}{4}}>0.
	\end{align*}
	Therefore, all eigenvalues afforded by irreducible characters of dimension less than $\binom{n}{4}$ are bounded from below by $-1$.
\end{proof}
\section{$4$-setwise for $\sym(n)$ and $\alt(n)$}

In this section, we prove Theorem~\ref{thm:main-sym} and Theorem~\ref{thm:main-alt}, for $k = 4$. We will distinguish two cases, depending whether $n$ is even or odd.

Let $\alpha = \binom{n}{4}-1$ and  $\beta, \gamma, \delta,\varepsilon$ be the dimensions of the irreducible characters corresponding to $[n-1,1], [n-2,2], [n-3,3]$ and $[n-4,4]$, respectively. That is, 
\begin{align*}
&\beta = n-1  & \gamma = \binom{n}{2}-n\\
&\delta = \binom{n}{3} - \binom{n}{2} & \varepsilon = \binom{n}{4} - \binom{n}{3}.
\end{align*}

\subsection{Small cases}
When $9 \leq n \leq 21$, we verify with \verb|Sagemath| that there exists a weighted adjacency matrix that satisfies \eqref{first}, \eqref{second} and \eqref{third}. A \verb|Sagemath| code on these eigenvalues is available in \cite{link}. The cases when $n\geq 22$ are considered in the next two sections.

\subsection{Even case}
Let $n\geq 22$ be even. We assign non-zero weights to the conjugacy classes $C_1 :=C_{(n-1,1)}, C_2 :=C_{(n-2,2)}, C_3 :=C_{(n-3,3)}, C_4 :=C_{(n-3,1^3)},$ and $C_5:= C_{(n-7,5,1^2)}$. We consider the weighted adjacency matrix
\begin{align}
	A = \frac{\omega_1}{|C_1|}A_{(n-1,1)} + \frac{\omega_2}{|C_2|} A_{(n-2,2)} + \frac{\omega_3}{|C_3|} A_{(n-3,3)} + \frac{\omega_4}{|C_4|} A_{(n-3,1^3)}+
	\frac{\omega_5}{|C_5|} A_{(n-7,5,1^2)}	,
\end{align}
where $\omega_1,\omega_2,\omega_3,\omega_4$, and $\omega_5$ are to be determined.

We observe that the eigenvalues of $A$ corresponding to the irreducible character $\chi^\lambda$, for $\lambda \vdash n$, is
\begin{align*}
	\xi_{\lambda} &= \frac{1}{f^\lambda} \sum_{i=1}^5 \omega_i \chi^\lambda(C_i).
\end{align*}

We will prove that for some values of $(\omega_i)_{i\in [5]}$, the properties \eqref{first}, \eqref{second}, and \eqref{third} are satisfied. In addition to these eigenvalues, we also want the eigenvalue afforded by $[n-3,2,1]$ to be $-1$. Let $\eta = \frac{n(n-2)(n-4)}{3}$ be the dimension of the irreducible character corresponding to $[n-3,2,1]$. In order to satisfy \eqref{first} and \eqref{second} as well as $\xi_{[n-3,2,1]} = -1$, the following system of linear equations must be satisfied.

\begin{align*}
	\begin{cases}
		\omega_1+ \omega_2 + \omega_3 +\omega_4 +\omega_5 &= \alpha\\
		-\omega_2 -\omega_3+2\omega_4 + \omega_5 &= -\beta\\
		-\omega_1 + \omega_2 -\omega_5 &= -\gamma\\
		-\omega_2 +\omega_3 -2\omega_4 -\omega_5 &= -\delta\\
		 -\omega_3 -\omega_4&= -\varepsilon\\
		 \omega_1 -\omega_3 -\omega_4 &= -\eta.
	\end{cases}
\end{align*}

The above system of linear equations is overdetermined but has a unique solution (note that the fifth equation is the combination of the first four). Its unique solution is  given as follows.

\begin{align}
	\begin{cases}
		\omega_1 &=  \alpha -\beta - \gamma -\delta -\eta = \frac{1}{24} \, n^{4} - \frac{3}{4} \, n^{3} + \frac{71}{24} \, n^{2} - \frac{13}{4} \, n 
		 \\
		\omega_2 &= \frac{1}{2} \left(\beta +\delta \right) = \frac{1}{12} \, n^{3} - \frac{1}{2} \, n^{2} + \frac{11}{12} \, n - \frac{1}{2} 
		, \\
		\omega_3 &= \frac{1}{3} \left( \alpha -\delta +\eta \right) = \frac{1}{72} \, n^{4} - \frac{1}{36} \, n^{3} - \frac{13}{72} \, n^{2} + \frac{19}{36} \, n - \frac{1}{3} 
		, \\
		\omega_4 &= \frac{1}{3}\left( 2\alpha -3\beta -3\gamma -2\delta - \eta \right) = \frac{1}{36} \, n^{4} - \frac{7}{18} \, n^{3} + \frac{41}{36} \, n^{2} - \frac{10}{9} \, n + \frac{1}{3} 
		,\\
		\omega_5 &= \frac{1}{2} \left(-2\alpha +3\beta +4\gamma +3\delta +2\eta \right) = -\frac{1}{24} \, n^{4} + \frac{5}{6} \, n^{3} - \frac{71}{24} \, n^{2} + \frac{8}{3} \, n - \frac{1}{2} 
		.
	\end{cases}\label{eq:weight-4-even}
\end{align}
We observe that
\begin{align*}
	\begin{cases}
		\omega_1, \omega_2, \omega_3,\omega_4\geq 0, \mbox{ for }n\geq 13\\
		\omega_5 <0 \mbox{ for } n\geq 16\\
	\end{cases}
\end{align*}

In the next lemma we determine the character values of $\sym(n)$ on the conjugacy classes $C_i$, for $i\in [5]$.
\begin{lem}
	Let $\lambda \vdash n\geq 22$. For any $i\in \{1,2,3,5\}$ and $x\in C_i$, we have $|\chi^\lambda(x)| \in \{0,1\}$. Moreover, if $x \in C_4$, then $|\chi^\lambda(x)| \in \{0,1,2\}$.\label{lem:char-val-4-even}
\end{lem}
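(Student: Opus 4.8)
The plan is to prove Lemma~\ref{lem:char-val-4-even} by bounding $|\chi^\lambda(x)|$ via the Murnaghan--Nakayama (MN) rule, exploiting the fact that each $x \in C_i$ has a single \emph{large} cycle (of length $n-1, n-2, n-3$, or $n-7$) together with very few small cycles. First I would recall that if $\sigma$ has a cycle of length $m$ and $m$ exceeds the number of cells outside the largest hook one can remove, then by iterating MN we are forced to strip border strips of length $m$ at each step where $m$ is still present; in particular, if $\sigma$ has cycle type $(m, \mu)$ with $m > |\mu|$ say, then $\chi^\lambda(\sigma) = \pm \chi^{\lambda/\xi}(\mu)$ summed over the (at most a controlled number of) ways to remove an $m$-border strip $\xi$ from $\lambda$. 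The key combinatorial input is that a Young diagram of size $n$ has only a bounded number of border strips of length $n-c$ for small $c$: precisely, since a border strip of length $\ell$ in $\lambda$ corresponds to a hook of length $\ell$ in the edge sequence, and removing it leaves a diagram of size $c$, the number of such strips is at most the number of partitions of $c$ (really at most $c+1$ or so, coming from how the removed strip meets the first row/column). For $c \in \{1,2,3,7\}$ this is a small explicit constant.

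Next I would make this quantitative for each $C_i$. For $C_1 = C_{(n-1,1)}$: removing an $(n-1)$-border strip $\xi$ from $\lambda$ leaves a single cell, so $\chi^\lambda(x) = \sum_\xi \pm \chi^{\lambda/\xi}((1))$; each $\chi^{\lambda/\xi}((1))$ is $\pm 1$ (the character of $\sym(1)$) or $0$ if the strip cannot be removed, and there is at most one way to remove an $(n-1)$-strip leaving a \emph{specific} leftover cell — more carefully, one checks that there is at most one removable border strip of length $n-1$ in any $\lambda \vdash n$ that is not a hook, and for hooks there may be two, but then the signs and the constraint that exactly one cell remains force cancellation or a single surviving term, giving $|\chi^\lambda(x)| \le 1$. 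The same argument with leftover size $2$ handles $C_2 = C_{(n-2,2)}$ and $C_3 = C_{(n-3,3)}$ (here the small cycle is a single $2$-cycle or $3$-cycle, and $\chi^\lambda((2))$, $\chi^\lambda((3))$ on a $2$- or $3$-cell skew shape is again in $\{0,\pm 1\}$); for $C_3$ with leftover size $3$ as a single $3$-cycle the skew-character values on $((3))$ are in $\{0, \pm 1\}$ because a size-$3$ skew shape either is a $3$-hook (value $\pm1$ on a $3$-cycle, via one border strip) or is not (value $0$). For $C_5 = C_{(n-7,5,1^2)}$, after stripping the $(n-7)$-border strip we are left evaluating a skew character of size $7$ on the cycle type $(5,1^2)$; the crucial point is that MN applied to the $5$-cycle on a size-$7$ skew shape yields $0$ or $\pm\chi^{\text{(size 2)}}((1^2)) = 0$ or $\pm 1$, and summing over the $O(1)$ ways to remove the $(n-7)$-strip one needs the signs to leave at most one surviving contribution. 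Finally $C_4 = C_{(n-3,1^3)}$: after the $(n-3)$-strip, a size-$3$ skew shape evaluated on $(1^3)$ gives $f^{\lambda/\xi} \in \{0,1,2\}$ (dimensions of $\sym(3)$-modules, or sums thereof for disconnected skew shapes — a disconnected size-$3$ skew shape has dimension at most $2$), and one must argue that at most one choice of $\xi$ contributes, or that multiple contributions still keep the total in $\{0,1,2\}$; this is where the bound $2$ rather than $1$ comes from, exactly matching the statement.

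The step I expect to be the main obstacle is controlling the \emph{number} of removable long border strips and showing the corresponding MN signs do not allow the bound to grow — i.e., ruling out, say, three border strips of length $n-1$ each contributing $+1$. For this I would use the structure of the \emph{beta-set} / edge-sequence description of $\lambda$: a removable border strip of length $\ell$ corresponds to choosing two beads in the abacus display at distance $\ell$, the lower one a bead and the upper a gap; the number of such pairs with $\ell = n-c$ is tightly constrained when $c$ is small because the total content of $\lambda$ is fixed, and one checks case by case (hook vs.\ near-hook vs.\ fat shapes) that at most two such strips exist, with opposite signs whenever there are two, except in the $C_4$ case where the small cycle structure $(1^3)$ permits two same-sign contributions bounded by $1$ each. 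I would also invoke the paper's own Lemma~6.1 of \cite{behajaina20203} (cited in the $k=3$ section) as a template, since it establishes exactly this kind of $\{0,1\}$ bound for cycle types with one long cycle, and adapt its argument to the four new cycle types here; alternatively, for a fully self-contained treatment one reduces everything to the standard fact that for $\lambda \vdash n$ and $m > n/2$ there is at most one border strip of length $m$, and for the finitely many $n \le n_0$ (if any slip through the asymptotic argument) one appeals to the \texttt{Sagemath} verification already cited in \cite{link}.
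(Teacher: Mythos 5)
Your proposal follows essentially the same route as the paper: the Murnaghan--Nakayama rule combined with the fact that for $n\geq 22$ a Young diagram with $n$ cells admits at most one rim hook of length $n-a$ for every $a\leq 7$ (the paper quotes \cite[Lemma~3.4~(3)]{behajaina20203}; in this range your ``$m>n/2$'' observation gives the same thing), after which $|\chi^\lambda(x)|$ is bounded by the maximal character value of $\sym(a)$ on the residual cycle type, yielding $1,1,1,2,1$ for the five classes. Your side discussions of possible second strips, sign cancellation, and disconnected skew shapes are unnecessary (and the claims that a hook may carry two strips of length $n-1$ and that a disconnected $3$-cell skew shape has dimension at most $2$ are in fact false): the long strip is unique and its removal leaves a straight partition $\mu\vdash a$, so no summation over strips and no skew characters ever arise.
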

We use  \cite[Lemma~3.4~(3)]{behajaina20203} in our proof. This proof depends on the \emph{Murnaghan-Nakayama Rule} and \emph{rim hooks} (see \cite[Section~4.10]{sagan2001symmetric} for the definition and properties). It was proved in \cite[Lemma~3.4~(3)]{behajaina20203} that if $n\geq 3a+1$, then in any Young diagram $\lambda \vdash n$ there exists at most one rim hook of length $n-a$.
\begin{proof}
	Since $n\geq 22$, there is at most one rim hook of length $n-a$ in any Young diagram with $n$ cells, for any $a\leq 7$. By the Murnaghan-Nakayama rule, we have
	\begin{align*}
		|\chi^\lambda_{(n-1,1)}| & \leq \max_{\mu \vdash 1} |\chi^\mu_{(1)}| = 1\\
		|\chi^\lambda_{(n-2,2)}| & \leq \max_{\mu \vdash 2} |\chi_{(2)}^\mu| = 1 \\
		|\chi^\lambda_{(n-3,3)}| & \leq \max_{\mu \vdash 3} |\chi_{(3)}^\mu| = 1\\
		|\chi^\lambda_{(n-3,1^3)}| & \leq \max_{\mu \vdash 3} |\chi_{(1^3)}^\mu| = f^{[2,1]} = 2 \\
		|\chi^\lambda_{(n-7,5,1^2)}| & \leq \max_{\mu \vdash 7}|\chi^\mu_{(5,1^2)}| =1.
	\end{align*}
	Since the irreducible characters of the symmetric group are integer valued, the proof follows.
\end{proof}

\begin{table}[H]
	\begin{tabular}{ccccccc}
		\hline
		& & & & & & \\
		& & $(n-1, 1)$ & $(n-2, 2)$ & $(n-3, 3)$ & $(n-3, 1^3)$  & $(n-7,5,1^2)$ \\
		Representation &Dimension &  &  &  &  &  \\ \hline
		$\left[n\right]$ &$1$ & $1$ & $1$ & $1$ & $1$ & $1$ \\
		$\left[n-1, 1\right]$ &$n-1$  & $0$ & $-1$ & $-1$ & $2$ & $1$ \\
		$\left[n-2, 2\right]$ &$\binom{n}{2}-n$ & $-1$ & $1$ & $0$ & $0$ & $-1$ \\
		$\left[n-2, 1^2\right]$ & $\binom{n-1}{2}$  & $0$ & $0$ & $1$ & $1$ & $0$ \\
		$\left[n-3, 3\right]$ & $\binom{n}{3}-\binom{n}{2}$ & $0$ & $-1$ & $1$ & $-2$ & $-1$ \\
		$\left[n-3, 2, 1\right]$ & $\frac{n(n-2)(n-4)}{3}$ & $1$ & $0$ & $-1$ & $-1$ & $0$ \\
		$\left[n-3, 1^3\right]$ &$\binom{n-1}{3}$ & $0$ & $0$ & $0$ & $0$ & $0$ \\
		$\left[n-4, 4\right]$ & $\binom{n}{4}-\binom{n}{3}$ & $0$ & $0$ & $-1$ & $-1$ & $0$ \\
		$\left[n-4, 1^4\right]$ & $\binom{n-1}{4}$ & $0$ & $0$ & $0$ & $0$ & $0$ \\
		$\left[n-4, 3, 1\right]$ & $\frac{n(n-1)(n-3)(n-6)}{8}$ & $0$ & $1$ & $0$ & $0$ & $1$ \\
		$\left[n-4, 2^2\right]$ & $\frac{n(n-1)(n-4)(n-5)}{12}$ & $0$ & $-1$ & $1$ & $1$ & $1$ \\
		$\left[n-4, 2, 1^2\right]$ & $\frac{n(n-2)(n-3)(n-5)}{8}$ & $-1$ & $0$ & $0$ & $0$ & $0$ \\
		$\left[n-5, 5\right]$ & $\binom{n}{5} -\binom{n}{4}$ & $0$ & $0$ & $0$ & $0$ & $1$ \\
		$\left[n-5, 1^5\right]$  & $\binom{n-1}{5}$ & $0$ & $0$ & $0$ & $0$ & $1$ \\
	\end{tabular}
	\caption{Character values for $k= 4$ and $n$ even}\label{tab:4-even}
\end{table}

Next, we prove that the irreducible characters of high dimension of $\sym(n)$ must correspond to small eigenvalues of $A$.
\begin{lem}
	If $\lambda \vdash n$ such that $f^\lambda > \binom{n}{5}$, then $|\xi_\lambda |< 1$.\label{lem:high-dim-4-odd}
\end{lem}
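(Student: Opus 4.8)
The plan is to estimate $|\xi_\lambda|$ directly from the formula $\xi_\lambda = \tfrac{1}{f^\lambda}\sum_{i=1}^5 \omega_i\chi^\lambda(C_i)$, using the uniform character bounds from Lemma~\ref{lem:char-val-4-even} to control the numerator and the hypothesis $f^\lambda > \binom{n}{5}$ to control the denominator. First I would apply Lemma~\ref{lem:char-val-4-even}: since $|\chi^\lambda(x)| \le 1$ for $x \in C_i$ with $i\in\{1,2,3,5\}$ and $|\chi^\lambda(x)| \le 2$ for $x\in C_4$, we get the crude bound
\begin{align*}
	|\xi_\lambda| \le \frac{1}{f^\lambda}\left( |\omega_1| + |\omega_2| + |\omega_3| + 2|\omega_4| + |\omega_5| \right).
\end{align*}
So the task reduces to showing that $|\omega_1| + |\omega_2| + |\omega_3| + 2|\omega_4| + |\omega_5| < \binom{n}{5}$ for $n\ge 22$.

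Next I would bound the sum of absolute values of the weights. Using the sign information recorded after \eqref{eq:weight-4-even} (namely $\omega_1,\omega_2,\omega_3,\omega_4 \ge 0$ and $\omega_5 < 0$ for $n$ large), we have $|\omega_1|+|\omega_2|+|\omega_3|+2|\omega_4|+|\omega_5| = \omega_1+\omega_2+\omega_3+2\omega_4 - \omega_5$. Since $\omega_1+\omega_2+\omega_3+\omega_4+\omega_5 = \alpha = \binom{n}{4}-1$, this equals $\alpha + \omega_4 - 2\omega_5$. Now I would substitute the explicit polynomial expressions from \eqref{eq:weight-4-even}: $\omega_4 = \tfrac{1}{36}n^4 - \tfrac{7}{18}n^3 + \cdots$ and $\omega_5 = -\tfrac{1}{24}n^4 + \tfrac{5}{6}n^3 - \cdots$, so $\alpha + \omega_4 - 2\omega_5$ is an explicit degree-$4$ polynomial in $n$ whose leading coefficient is $\tfrac{1}{24} + \tfrac{1}{36} + \tfrac{1}{12} = \tfrac{11}{72}$. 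Since $\binom{n}{5}$ has leading coefficient $\tfrac{1}{120}$ — wait, that is smaller, so this crude approach fails; I would instead need to be more careful.

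The main obstacle, then, is precisely that the naive triangle-inequality bound is too lossy: $\omega_5$ is of order $n^4$ (comparable to $\alpha$ and to $\binom n5 \sim n^5/120$... actually $\binom n5$ is order $n^5$, so in fact the crude bound \emph{does} work for $n$ large, since a degree-$4$ polynomial is eventually dominated by $\binom{n}{5}$). Let me re-examine: $\binom{n}{5} = \tfrac{n(n-1)(n-2)(n-3)(n-4)}{120}$ is a degree-$5$ polynomial, while $\alpha + \omega_4 - 2\omega_5$ is degree $4$. Hence the inequality $\alpha + \omega_4 - 2\omega_5 < \binom{n}{5}$ holds for all sufficiently large $n$, and the real work is just to pin down the threshold and check it is at most $22$. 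So the plan is: (i) invoke Lemma~\ref{lem:char-val-4-even} for the numerator bound; (ii) use the established signs of $\omega_1,\dots,\omega_5$ to rewrite the bound as $\tfrac{1}{f^\lambda}(\alpha + \omega_4 - 2\omega_5)$; (iii) plug in the polynomial formulas from \eqref{eq:weight-4-even} and compare the resulting degree-$4$ polynomial against $\binom{n}{5}$, verifying the inequality holds for $n \ge 22$ by an elementary estimate (e.g. bounding each term and noting $\binom{n}{5} \ge \tfrac{(n-4)^5}{120}$). The only genuinely delicate point is making sure the sign assumptions on the $\omega_i$ are valid throughout the range $n \ge 22$, which follows from the thresholds ($n\ge 13$, $n \ge 16$) already noted in the text; everything else is a routine polynomial comparison.
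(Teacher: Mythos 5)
Your proposal is correct and follows essentially the same route as the paper's proof: the triangle inequality combined with the character-value bounds of Lemma~\ref{lem:char-val-4-even}, the sign information on the $\omega_i$ to rewrite the numerator as $\alpha + \omega_4 - 2\omega_5$, and a comparison of the resulting degree-$4$ polynomial with $\binom{n}{5}$. Your mid-argument worry is resolved exactly as you concluded — the numerator is degree $4$ while $\binom{n}{5}$ is degree $5$, and the paper verifies the same inequality (with numerator $\tfrac{11}{72}n^4 - \tfrac{83}{36}n^3 + \cdots$) holds already for $n\geq 3$, so the threshold $n\geq 22$ poses no difficulty.
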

\begin{proof}
	Let $\lambda \vdash n$ such that $f^\lambda > \binom{n}{5}$. The eigenvalue of $A$ corresponding to $\chi^\lambda$ is 
	\begin{align*}
		\xi_{\lambda} &= \sum_{i=1}^5 \omega_i \chi^\lambda(C_i).
	\end{align*}
	By the triangle inequality, we have
	\begin{align*}
		|\xi_\lambda| &\leq \frac{1}{f^\lambda} \left(|\omega_1| + |\omega_2| +|\omega_3| + 2|\omega_4| + |\omega_5|\right)\\
		& = \frac{1}{f^\lambda}\left(\omega_1 + \omega_2 + \omega_3 +2\omega_4 -\omega_5\right)
		\\
		&= \frac{1}{f^\lambda} \left( \alpha +\omega_4 - 2\omega_5 \right)\\
		&< \frac{\frac{11}{72} \, n^{4} - \frac{83}{36} \, n^{3} + \frac{541}{72} \, n^{2} - \frac{241}{36} \, n + \frac{1}{3} }{\binom{n}{5}}<1  \hspace*{5cm}  (\mbox{ for }n\geq 3 ).
	\end{align*}
	Therefore, $|\xi_{\lambda}| <1.$
\end{proof}

Now, we deal with the irreducible characters of dimension at most $\binom{n}{5}$. The partitions affording these characters are given in Lemma~\ref{lem:characters-dim-4}.

\begin{lem}
	If $\lambda \vdash n$ such that $f^\lambda < \binom{n}{5}$, then $-1\leq \xi_{\lambda}\leq \alpha$.\label{lem:low-dim-4-odd}
\end{lem}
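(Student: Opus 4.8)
The plan is to prove that for every partition $\lambda \vdash n$ with $f^\lambda < \binom{n}{5}$, the eigenvalue $\xi_\lambda$ of the weighted adjacency matrix $A$ lies in the interval $[-1,\alpha]$. By Lemma~\ref{lem:characters-dim-4}, such partitions are exactly those in the set $S_n$ or their transposes, so the list of cases is finite and explicit. Moreover, by the discussion in Section~\ref{sect:idea} (and since the classes $C_1,\dots,C_5$ on which $A$ is supported consist of even permutations when $n$ is even), the eigenvalues afforded by $\lambda$ and $\lambda'$ coincide; hence it suffices to compute $\xi_\lambda$ only for the representatives $\lambda \in S_n$ themselves, namely $[n]$, $[n-1,1]$, $[n-2,2]$, $[n-2,1^2]$, $[n-3,3]$, $[n-3,2,1]$, $[n-3,1^3]$, $[n-4,4]$, and $[n-4,1^4]$.

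First I would record the relevant character values on $C_1,\dots,C_5$ for each of these nine partitions; these are exactly the entries already tabulated in Table~\ref{tab:4-even}. Then, using $\xi_\lambda = \frac{1}{f^\lambda}\sum_{i=1}^5 \omega_i \chi^\lambda(C_i)$ together with the explicit weights from \eqref{eq:weight-4-even}, I would evaluate $\xi_\lambda$ for each partition. For $[n]$ the row sum gives $\xi_{[n]}=\omega_1+\omega_2+\omega_3+\omega_4+\omega_5=\alpha$, which is the largest value. For $[n-1,1]$, $[n-2,2]$, $[n-3,3]$ and $[n-3,2,1]$ the defining linear system was set up precisely so that $\xi_\lambda=-1$, so these four are immediate from the construction. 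The remaining partitions $[n-2,1^2]$, $[n-3,1^3]$, $[n-4,4]$, and $[n-4,1^4]$ require a short computation: substitute the table entries and the polynomial expressions for $\omega_i$, simplify, and verify the resulting rational function of $n$ is at least $-1$ (and at most $\alpha$) for all $n\geq 22$.

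I expect the main obstacle to be controlling the signs of $\xi_{[n-3,1^3]}$ and $[n-4,1^4]$, since these are the partitions most likely to produce a negative eigenvalue. The strategy there is to write each $\xi_\lambda$ as a single rational function $P(n)/Q(n)$ with $Q(n)=f^\lambda>0$, clear denominators, and show $P(n)+Q(n)\geq 0$ (i.e.\ $\xi_\lambda\geq -1$) by exhibiting the numerator as a polynomial in $n$ with nonnegative value on $n\geq 22$; a degree-and-leading-coefficient argument, or a crude bound on the lower-order terms, should suffice without a delicate estimate. The upper bound $\xi_\lambda\leq\alpha$ is comparatively easy, since for every $\lambda\neq[n]$ one has $|\chi^\lambda(C_i)|\leq 2$ while $f^\lambda$ grows like a higher power of $n$, so in fact $|\xi_\lambda|$ is bounded by a constant for these partitions, far below $\alpha=\binom{n}{4}-1$.

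\begin{proof}
	The irreducible characters of degree less than $\binom{n}{5}$ correspond to the partitions listed in Lemma~\ref{lem:characters-dim-4}, together with their transposes. Since the conjugacy classes $C_1,\dots,C_5$ consist of even permutations when $n$ is even, the eigenvalues afforded by $\lambda$ and $\lambda'$ are equal, so it suffices to compute $\xi_\lambda$ for $\lambda \in S_n$. Using the character values in Table~\ref{tab:4-even} and the weights in \eqref{eq:weight-4-even}, a direct computation gives
	\begin{align*}
		\xi_{[n]} &= \omega_1 + \omega_2 + \omega_3 + \omega_4 + \omega_5 = \alpha,\\
		\xi_{[n-1,1]} &= \xi_{[n-2,2]} = \xi_{[n-3,3]} = \xi_{[n-3,2,1]} = -1,
	\end{align*}
	where the last four equalities hold by the very construction of the weights. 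For the remaining partitions one computes, after simplification, that
	\begin{align*}
		\xi_{[n-2,1^2]} &= \frac{\omega_3 + \omega_4}{\binom{n-1}{2}} = \frac{\varepsilon}{\binom{n-1}{2}} \geq 0,\\
		\xi_{[n-3,1^3]} &= \xi_{[n-4,1^4]} = 0,\\
		\xi_{[n-4,4]} &= \frac{-\omega_3 - \omega_4}{\binom{n}{4} - \binom{n}{3}} = \frac{-\varepsilon}{\varepsilon} = -1.
	\end{align*}
	In each case the value lies in $[-1,\alpha]$, completing the proof.
\end{proof}
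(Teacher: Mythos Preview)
Your argument has a genuine gap: you have misidentified the list of partitions that need to be checked. The hypothesis is $f^\lambda < \binom{n}{5}$, not $f^\lambda < \binom{n}{4}$. Lemma~\ref{lem:characters-dim-4} only classifies the partitions with $f^\lambda < \binom{n}{4}$, giving the set $S_n$; to cover the full range $f^\lambda < \binom{n}{5}$ you must also invoke Lemma~\ref{lem:characters-dim-5}, which contributes the additional set
\[
T_n = \{[n-4,3,1],\ [n-4,2^2],\ [n-4,2,1^2],\ [n-5,5],\ [n-5,1^5]\}
\]
(and their transposes). Your proof treats only the nine partitions in $S_n$ and so leaves these five cases entirely unverified.

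These missing cases are not harmless. Using Table~\ref{tab:4-even} and the weights in \eqref{eq:weight-4-even}, one finds for example that $\xi_{[n-4,2,1^2]} = -8\omega_1 / \bigl(n(n-2)(n-3)(n-5)\bigr)$ and $\xi_{[n-5,5]} = \xi_{[n-5,1^5]} = \omega_5 / \binom{n-1}{5}$, each of which is genuinely negative (recall $\omega_1>0$ and $\omega_5<0$), so one must actually check that they do not drop below $-1$. The paper carries out exactly these computations, obtaining explicit rational functions in $n$ and verifying $\xi_\lambda \geq -1$ for each $\lambda \in T_n$ for $n$ in the stated range. The computations you did perform for $\lambda \in S_n$ are correct and match the paper; you simply stopped too early.
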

\begin{proof}
	By Lemma~\ref{lem:characters-dim-4}, we know all the partitions whose corresponding irreducible characters are of degree at most $\binom{n}{5}$. Using Table~\ref{tab:4-even}, we can compute the eigenvalues corresponding to these partitions. They are as follows.
	\begin{align*}
		\xi_{[n]} &= \alpha \\
		\xi_{[n-1,1]} &= -1 \\
		\xi_{[n-2,2]} &= -1\\
		\xi_{[n-2,1^2]} &= \frac{\omega_3 +\omega_4}{\binom{n-1}{2}}=  \frac{\varepsilon}{\binom{n-1}{2}} >0  \\
		\xi_{[n-3,3]} &= -1\\
		\xi_{[n-3,2,1]} &= -1 \hspace*{1cm} \\
		\xi_{[n-3,1^3]} &= 0 \\
		\xi_{[n-4,4]} &= -1\\
		\xi_{[n-4,1^4]} &= 0 \\
		\xi_{[n-4,3,1]} &= \frac{8(\omega_2 + \omega_5)}{n(n-1)(n-3)(n-6)} = -\tfrac{n^{4} - 22 \, n^{3} + 83 \, n^{2} - 86 \, n + 24}{3 \, {\left(n^{4} - 10 \, n^{3} + 27 \, n^{2} - 18 \, n\right)}}>-1, \hspace{0.1cm} (\mbox{ for }n\geq 7) \\
		\xi_{[n-4,2^2]} &= \frac{12(-\omega_2 +\omega_3 +\omega_4 +\omega_5)}{n(n-1)(n-4)(n-5)} = \tfrac{12(\gamma +\eta)}{n(n-1)(n-4)(n-5)} >0, \hspace{1cm} (\mbox{ for }n\geq 4) \\
		\xi_{[n-4,2,1^2]} &= \frac{-8\omega_1}{n(n-2)(n-3)(n-5)} = -\tfrac{n^{4} - 18 \, n^{3} + 71 \, n^{2} - 78 \, n}{3 \, {\left(n^{4} - 10 \, n^{3} + 31 \, n^{2} - 30 \, n\right)}} >-1, \hspace{0.5cm} (\mbox{ for } n\geq 4) \\
		\xi_{[n-5,5]} &= \frac{\frac{1}{2} \left(-2\alpha +3\beta +4\gamma +3\delta +2\eta \right)}{\binom{n}{5}-\binom{n}{4}} = -\tfrac{5 \, {\left(n^{4} - 20 \, n^{3} + 71 \, n^{2} - 64 \, n + 12\right)}}{n^{5} - 15 \, n^{4} + 65 \, n^{3} - 105 \, n^{2} + 54 \, n} >-1, \ \ (\mbox{ for }n\geq 10) 
		\\
		\xi_{[n-5,1^5]} &= \frac{\omega_5}{\binom{n-1}{5}} = \frac{\frac{1}{2} \left(-2\alpha +3\beta +4\gamma +3\delta +2\eta \right)}{\binom{n-1}{5}} = -\tfrac{5 \, {\left(n^{4} - 20 \, n^{3} + 71 \, n^{2} - 64 \, n + 12\right)}}{n^{5} - 15 \, n^{4} + 85 \, n^{3} - 225 \, n^{2} + 274 \, n - 120} >-1,
	\end{align*}
	$\mbox{ for }n\geq 16$.
\end{proof}

Combining Lemma~\ref{lem:high-dim-4-odd} and Lemma~\ref{lem:low-dim-4-odd}, we conclude that \eqref{first}, \eqref{second} and \eqref{third} are satisfied by the weighted adjacency matrix $A$. Therefore, Theorem~\ref{thm:main-sym} and Theorem~\ref{thm:main-alt} hold for $k = 4$.

\subsection{Odd case}
The approach used here is almost similar to the even case. Let $n\geq 23$ be odd. We will need to make the eigenvalues afforded by the irreducible characters corresponding to $[n-3,2,1], [n-2,1^2]$ and $[n-3,1^3]$ be equal to $-1$.

We assign non-zero weights to the conjugacy classes $C_1 :=C_{(n)}, C_2 :=C_{(n-2,1^2)}, C_3 :=C_{(n-3,2,1)},\\ C_4 :=C_{(n-6,3^2)},C_5:= C_{(n-7,6,1)}$ and $C_6 := C_{(n-9,6,1^3)}$. We consider the following weighted adjacency matrix of $\Gamma_{n,k}$.

\begin{align}
	\begin{split}
		A & = \frac{\omega_1}{|C_1|}A_{(n)} + \frac{\omega_2}{|C_2|} A_{(n-2,1^2)} + \frac{\omega_3}{|C_3|} A_{(n-3,2,1)} + \frac{\omega_4}{|C_4|}  A_{(n-6,3^2)} +\frac{\omega_5}{|C_5|}A_{(n-7,6,1)} \\ &\hspace{0.5cm}+\frac{\omega_6}{|C_6|} A_{(n-9,6,1^3)}.
	\end{split}
\end{align}

In addition to $\alpha,\beta,\gamma,\delta,$ and $\varepsilon$ which were defined in the beginning of this section, we let
\begin{align*}
	\zeta & = \frac{n(n-2)(n-4)}{3},\\
	\eta &= \binom{n-1}{2},\\
	\theta &= \binom{n-1}{3}. 
\end{align*}
The numbers $\zeta,\eta,$ and $\theta$ are respectively the dimensions of the irreducible characters afforded by the partitions $[n-3,2,1], [n-2,1^2],$ and $[n-3,1^3]$, respectively. In order for \eqref{first} and \eqref{second} to be satisfied, as well as the property explained in the beginning of this subsection, the weights $(\omega_i)_{i\in [6]}$ need to be as follows.

\begin{align*}
	\begin{cases}
		\omega_1 + \omega_2 + \omega_3 + \omega_4 + \omega_5 + \omega_6 &= \alpha,\\
		-\omega_1+ \omega_2 -\omega_4 +2\omega_6 &= -\beta,\\
		-\omega_2 - \omega_5 &= - \gamma,\\
		-\omega_2 +2\omega_4 -2\omega_6 &= - \delta,\\
		-\omega_3 -2\omega_4 -\omega_6 &= -\varepsilon,\\
		\omega_3 -2\omega_4 + \omega_5 -\omega_6 &= -\zeta,\\
		\omega_1 -\omega_3 +\omega_4 +\omega_6 &= -\eta,\\
		-\omega_1 + \omega_4 &=-\theta.
	\end{cases}
\end{align*}

We note that $-\varepsilon = \beta + \gamma +\delta -\alpha$ and $\beta +\delta = \theta$. The system of linear equation is overdetermined, but has a unique solution (note that the fifth equation is the sum of the first fourth equations and the last equation is the sum of the second and the fourth equations). The weights $(\omega_i)_{i=1,2,3,4,5,6}$ are given as follows.

\begin{align}
	\begin{cases}
		\omega_1 &= \frac{1}{6} \left( \alpha +5\beta +4\delta +\zeta \right) = \frac{1}{144} \, n^{4} + \frac{1}{8} \, n^{3} - \frac{133}{144} \, n^{2} + \frac{43}{24} \, n - 1,  \\
		\omega_2 &= \beta +\gamma +\delta +\zeta +\eta = \frac{1}{2} \, n^{3} - 2 \, n^{2} + \frac{3}{2} \, n, \\
		\omega_3 &= \frac{1}{2}\left( \alpha -\gamma +\eta \right) = \frac{1}{48} \, n^{4} - \frac{1}{8} \, n^{3} + \frac{11}{48} \, n^{2} - \frac{1}{8} \, n
		, \\
		\omega_4 &= \frac{1}{6} \left( \alpha -\beta -2\delta +\zeta \right) = \frac{1}{144} \, n^{4} - \frac{1}{24} \, n^{3} + \frac{11}{144} \, n^{2} - \frac{1}{24} \, n
		,\\
		\omega_5 &= -\beta -\delta -\zeta - \eta = -\frac{1}{2} \, n^{3} + \frac{5}{2} \, n^{2} - 3 \, n
		, 
		\\
		\omega_6 &= \frac{1}{6} \left( \alpha -4\beta -3\gamma -2\delta -2\zeta -3\eta \right) = \frac{1}{144} \, n^{4} - \frac{5}{24} \, n^{3} + \frac{83}{144} \, n^{2} - \frac{3}{8} \, n
		.
	\end{cases}\label{eq:weight-4-odd}
\end{align}
We observe the following.
\begin{align}
	\begin{cases}
		\omega_1,\omega_2, \omega_3 , \omega_4,\omega_6 >0, \mbox{ for }n\geq 27\\
		\omega_5 <0, \mbox{ for } n\geq 3.
	\end{cases}\label{eq:weights-sign-4-odd}
\end{align}

\begin{lem}
	Let $\lambda \vdash n \geq 28$. For any $x \in C_1\cup C_2\cup C_3 \cup C_5$, $|\chi^\lambda(x)| \in \{0,1\}$. Moreover, if $x\in C_4 \cup C_6$, then $|\chi^\lambda(x)| \in \{0,1,2\}$.\label{lem:char-val-4-odd}
\end{lem}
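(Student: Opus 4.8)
The plan is to imitate the proof of Lemma~\ref{lem:char-val-4-even}: use the Murnaghan--Nakayama rule to peel off the long cycle of each class (which, for $n$ large, corresponds to an essentially unique rim hook), and then control what is left by a character value of a symmetric group of bounded degree. The six classes have cycle types $(n)$, $(n-2,1^2)$, $(n-3,2,1)$, $(n-6,3^2)$, $(n-7,6,1)$, $(n-9,6,1^3)$; write each as $(n-a_i,\rho_i)$, where $a_1,\dots,a_6 = 0,2,3,6,7,9$ and $\rho_1,\dots,\rho_6 = (\,),(1^2),(2,1),(3^2),(6,1),(6,1^3)$. Since $n\geq 28\geq 3a_i+1$ for every $i$ — the binding constraint is $i=6$, where $3\cdot 9+1=28$ — \cite[Lemma~3.4~(3)]{behajaina20203} tells us that every $\lambda\vdash n$ has at most one rim hook of length $n-a_i$.

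First I would remove that (at most one) long rim hook via Murnaghan--Nakayama; this contributes a sign $\pm1$ and leaves a diagram with $a_i$ cells, so $|\chi^\lambda(x)|\leq \max_{\mu\vdash a_i}|\chi^\mu_{\rho_i}|$ for $x\in C_i$. For $i=1$ the bound is $1$ trivially. For $i=2$ and $i=3$ it is $\max_{\mu\vdash 2}|\chi^\mu_{(1^2)}|=1$ and $\max_{\mu\vdash 3}|\chi^\mu_{(2,1)}|=1$, immediate from the character tables of $\sym(2)$ and $\sym(3)$. For $i=5$ one applies the same reduction once more: a $7$-cell diagram has a unique rim hook of length $6=7-1$ (as $7\geq 3\cdot 1+1$), whence $\max_{\mu\vdash 7}|\chi^\mu_{(6,1)}|\leq \max_{\nu\vdash 1}|\chi^\nu_{(1)}|=1$. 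Together with the integrality of the characters of $\sym(n)$, this gives $|\chi^\lambda(x)|\in\{0,1\}$ for $x\in C_1\cup C_2\cup C_3\cup C_5$.

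What remains is to bound $\max_{\mu\vdash 6}|\chi^\mu_{(3^2)}|$ and $\max_{\mu\vdash 9}|\chi^\mu_{(6,1^3)}|$ by $2$; I expect this to be the only genuine obstacle, since here the ``unique rim hook'' argument fails — removing a length-$3$ strip from a $6$-cell diagram, or a length-$6$ strip from a $9$-cell diagram, would require $6\geq 10$, resp.\ $9\geq 10$. I would settle both by a short finite check, using transposition symmetry ($\chi^{\mu'}_\rho=\pm\chi^{\mu}_\rho$) to halve the work. For $\mu\vdash 6$, a further application of Murnaghan--Nakayama writes $\chi^\mu_{(3^2)}$ as a signed sum of at most three values $\chi^\nu_{(3)}$ ($\nu\vdash 3$), one for each of $[3],[2,1],[1^3]$, each of absolute value $1$; inspecting which strips actually occur for a given $6$-cell $\mu$ shows that at most two do, so $|\chi^\mu_{(3^2)}|\leq 2$ (attained at $[2^3]$). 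For $\mu\vdash 9$, removing a length-$6$ strip leaves some $\nu\vdash 3$, and the strip with $\mu\setminus\xi=[2,1]$ (the only $\nu$ with $f^\nu=2$) is unique if it exists; running through the partitions of $9$ confirms $\max_{\mu\vdash 9}|\chi^\mu_{(6,1^3)}|=2$ (attained at $[4,3,2]$), in agreement with the \verb|Sagemath| computations of \cite{link}. Finally, integrality of the $\sym(n)$-characters promotes $|\chi^\lambda(x)|\leq 2$ to $|\chi^\lambda(x)|\in\{0,1,2\}$ for $x\in C_4\cup C_6$, completing the proof.
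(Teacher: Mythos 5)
Your proposal is correct and follows essentially the same route as the paper: invoke the uniqueness of the long rim hook from \cite[Lemma~3.4]{behajaina20203} (valid since $n\geq 28=3\cdot 9+1$), apply the Murnaghan--Nakayama rule to reduce to $\max_{\mu\vdash a}|\chi^\mu_{\rho}|$ for $a\in\{0,2,3,6,7,9\}$, and finish with integrality of the characters. The only difference is that you spell out the small finite checks (e.g.\ $\max_{\mu\vdash 6}|\chi^\mu_{(3^2)}|=2$ at $[2^3]$ and $\max_{\mu\vdash 9}|\chi^\mu_{(6,1^3)}|=2$ at $[4,3,2]$) that the paper simply asserts, and your stated values agree with the paper's.
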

\begin{proof}
	The proof is similar to the proof of Lemma~\ref{lem:char-val-4-even}. By \cite[Lemma~3.4]{behajaina20203}, there exists a unique rim hook of lenght $n-a$ in the Young diagram corresponding to $\lambda$, for any $a\in \{0,2,3,6,7,9\}$. By the Murnaghan-Nakayama rule, we have
	\begin{align*}
		|\chi^\lambda_{(n)}| &\leq 1\\
		|\chi^\lambda_{(n-2,1^2)}| &\leq \max_{\mu \vdash 2} |\chi^{\mu}_{(1^2)}| = 1\\
		|\chi^\lambda_{(n-3,2,1)}| &\leq \max_{\mu \vdash 3} |\chi^\mu_{(2,1)}| = 1\\
		|\chi^\lambda_{(n-6,3^2)}| &\leq \max_{\mu \vdash 6} |\chi^\mu_{(3^2)}| =  2\\
		|\chi^\lambda_{(n-7,6,1)}| &\leq \max_{\mu \vdash 7} |\chi^\mu_{(6,1)}| = 1\\
		|\chi^\lambda_{(n-9,6,1^3)}| & \leq \max_{\mu \vdash 9} |\chi^\lambda_{(6,1^3)}| = 2.
	\end{align*}
\end{proof}

\begin{table}[H]
	\begin{tabular}{ccccccc}
		\hline
		& & & & & & \\
		& $(n)$ & $(n-2,1^2)$ & $(n-3,2,1)$ & $(n-6,3^2)$ & $(n-7,6,1)$ & $(n-9,6,1^3)$ \\
		Representation&  & & & & & \\ \hline
		$\left[n\right]$ & $1$ & $1$ & $1$ & $1$ & $1$ & $1$ \\
		$\left[n-1, 1\right]$ & $-1$ & $1$ & $0$ & $-1$ & $0$ & $2$ \\
		$\left[n-2, 2\right]$ & $0$ & $-1$ & $0$ & $0$ & $-1$ & $0$ \\
		$\left[n-2, 1^2\right]$ & $1$ & $0$ & $-1$ & $1$ & $0$ & $1$ \\
		$\left[n-3, 3\right]$ & $0$ & $-1$ & $0$ & $2$ & $0$ & $-2$ \\
		$\left[n-3, 2, 1\right]$ & $0$ & $0$ & $1$ & $-2$ & $1$ & $-1$ \\
		$\left[n-3, 1^3\right]$ & $-1$ & $0$ & $0$ & $1$ & $0$ & $0$ \\
		$\left[n-4, 4\right]$ & $0$ & $0$ & $-1$ & $-2$ & $0$ & $-1$ \\
		$\left[n-4, 1^4\right]$ & $1$ & $0$ & $0$ & $-1$ & $0$ & $0$ \\
		$\left[n-4, 3, 1\right]$ & $0$ & $1$ & $0$ & $0$ & $0$ & $0$ \\
		$\left[n-4, 2^2\right]$ & $0$ & $1$ & $-1$ & $2$ & $0$ & $1$ \\
		$\left[n-4, 2, 1^2\right]$ & $0$ & $0$ & $0$ & $0$ & $-1$ & $0$ \\
		$\left[n-5, 5\right]$ & $0$ & $0$ & $0$ & $0$ & $0$ & $0$ \\
		$\left[n-5, 1^5\right]$ & $-1$ & $0$ & $0$ & $1$ & $0$ & $0$ \\
	\end{tabular}
	\caption{Character table for low dimensional characters when $n$ is odd.}\label{tab:4-odd}
\end{table}

We use the previous lemma to prove that the eigenvalues corresponding to the high-dimensional irreducible characters must be at least $-1$ and small in absolute values.

\begin{lem}
	If $f^\lambda > \binom{n}{5}$, then $|\xi_{\lambda}| < 1$.
\end{lem}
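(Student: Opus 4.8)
The plan is to bound the eigenvalue $\xi_\lambda$ afforded by a high-dimensional irreducible character $\chi^\lambda$ (with $f^\lambda > \binom{n}{5}$) using the triangle inequality together with the character-value bounds from Lemma~\ref{lem:char-val-4-odd}. Concretely, since
\begin{align*}
	\xi_\lambda = \frac{1}{f^\lambda}\sum_{i=1}^6 \omega_i \chi^\lambda(C_i),
\end{align*}
and $|\chi^\lambda(x)| \le 1$ for $x\in C_1\cup C_2\cup C_3\cup C_5$ while $|\chi^\lambda(x)|\le 2$ for $x\in C_4\cup C_6$, I would write
\begin{align*}
	|\xi_\lambda| \le \frac{1}{f^\lambda}\left( |\omega_1| + |\omega_2| + |\omega_3| + 2|\omega_4| + |\omega_5| + 2|\omega_6| \right).
\end{align*}

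Next I would use the sign information recorded in \eqref{eq:weights-sign-4-odd}: for $n\ge 28$ we have $\omega_1,\omega_2,\omega_3,\omega_4,\omega_6>0$ and $\omega_5<0$, so the absolute values can be removed, turning the bound into $\frac{1}{f^\lambda}\left(\omega_1+\omega_2+\omega_3+2\omega_4-\omega_5+2\omega_6\right)$. Then I would simplify this numerator using the linear relations among the $\omega_i$ coming from the defining system; in particular $\omega_1+\omega_2+\omega_3+\omega_4+\omega_5+\omega_6=\alpha$, so the numerator equals $\alpha + \omega_4 - 2\omega_5 + \omega_6$. Substituting the explicit polynomial expressions for $\omega_4,\omega_5,\omega_6$ from \eqref{eq:weight-4-odd} yields a degree-$4$ polynomial in $n$ for the numerator. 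Since $f^\lambda>\binom{n}{5}$, which is a degree-$5$ polynomial with positive leading coefficient, dividing the degree-$4$ numerator by $\binom{n}{5}$ gives a quantity that tends to $0$; a direct comparison then shows it is $<1$ for all $n$ in the relevant range (indeed for all $n\ge 3$, or at worst $n\ge 28$, matching the hypothesis of Lemma~\ref{lem:char-val-4-odd}).

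The main obstacle here is essentially bookkeeping rather than conceptual: one must correctly carry out the substitution and simplification of the polynomial numerator and then verify the polynomial inequality $(\text{degree-}4\text{ polynomial}) < \binom{n}{5}$ over the required range of $n$. The only subtlety is making sure the sign assumptions in \eqref{eq:weights-sign-4-odd} genuinely hold on the range where the lemma is invoked (so that the absolute values collapse cleanly), and that the threshold $n\ge 28$ from Lemma~\ref{lem:char-val-4-odd} is compatible with the polynomial inequality — which it is, since the degree-$5$ term dominates comfortably once $n$ is this large. I expect the proof to be a short computation closely paralleling the even case (Lemma~\ref{lem:high-dim-4-odd}), concluding $|\xi_\lambda|<1$.
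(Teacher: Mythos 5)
Your proposal matches the paper's proof essentially line for line: the same triangle-inequality bound with coefficients $1,1,1,2,1,2$ from the character-value lemma, the same use of the sign information (in particular $|\omega_5|=-\omega_5$) to drop absolute values, the same reduction of the numerator to $\alpha+\omega_4-2\omega_5+\omega_6$ via the defining relation $\sum_i\omega_i=\alpha$, and the same final comparison of the resulting degree-$4$ polynomial with $\binom{n}{5}$ (which the paper notes already holds for $n\geq 3$). The only trivial discrepancy is the threshold you quote for the sign conditions ($n\geq 27$ in \eqref{eq:weights-sign-4-odd}, not $28$), which does not affect the argument.
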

\begin{proof}
	By the triangle inequality, we have
	\begin{align*}
		|\xi_{\lambda}| &\leq \frac{1}{f^\lambda}\left(|\omega_1| + |\omega_2| + |\omega_3| + 2|\omega_4|+|\omega_5| + 2|\omega_6|\right)\\
		&\leq \frac{1}{f^\lambda}\left(\omega_1 +\omega_2 + \omega_3 +2\omega_4 - \omega_5 + 2\omega_6\right)\\
		&= \frac{1}{f^\lambda}\left(\alpha +\omega_{4} -2\omega_5 + \omega_{6}\right)\\
		&= \frac{1}{f^\lambda}\left( \frac{23}{144} \, n^{4} - \frac{181}{72} \, n^{3} + \frac{1165}{144} \, n^{2} - \frac{509}{72} \, n + \frac{1}{3}		
		 \right)\\
		&< \frac{\frac{23}{144} \, n^{4} - \frac{181}{72} \, n^{3} + \frac{1165}{144} \, n^{2} - \frac{509}{72} \, n + \frac{1}{3}
		}{\binom{n}{5}} <1, \hspace{1cm}\mbox{ for }  n\geq 3.
	\end{align*}
\end{proof}

Next, we compute the eigenvalues afforded by the low dimensional characters.
\begin{lem}
	Let $\lambda \vdash n$. If $f^\lambda <\binom{n}{5}$, then $\xi_{\lambda}\geq -1$.
\end{lem}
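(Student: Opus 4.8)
The plan is to mimic exactly the structure of the even case (Lemma~\ref{lem:low-dim-4-odd}), since the weights in \eqref{eq:weight-4-odd} were chosen precisely so that the eigenvalues afforded by $[n]$, $[n-1,1]$, $[n-2,2]$, $[n-3,3]$, $[n-3,2,1]$, $[n-2,1^2]$ and $[n-3,1^3]$ come out to the prescribed values $\alpha$ or $-1$. First I would invoke Lemma~\ref{lem:characters-dim-4} to reduce to checking only the partitions $\lambda \in S_n$ (since by the reduction discussed in Section~\ref{sect:idea}, $\xi_\lambda = \xi_{\lambda'}$ whenever the connection set consists of even permutations; I need to note that the classes $C_1,\dots,C_6$ are all even when $n$ is odd, so Proposition on transposes applies). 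That leaves the fourteen partitions listed in Table~\ref{tab:4-odd}.

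Next I would read off the character values from Table~\ref{tab:4-odd} and substitute into $\xi_\lambda = \frac{1}{f^\lambda}\sum_{i=1}^6 \omega_i \chi^\lambda(C_i)$ for each of the fourteen partitions, using the closed forms for $\omega_1,\dots,\omega_6$ and the dimensions $f^\lambda$ in terms of $n$. By construction $\xi_{[n]} = \alpha$, and $\xi_{[n-1,1]} = \xi_{[n-2,2]} = \xi_{[n-3,3]} = \xi_{[n-3,2,1]} = \xi_{[n-2,1^2]} = \xi_{[n-3,1^3]} = -1$ — these are forced by the linear system, so I would simply verify the row combinations collapse correctly. For the remaining seven partitions — $[n-4,4]$, $[n-4,1^4]$, $[n-4,3,1]$, $[n-4,2^2]$, $[n-4,2,1^2]$, $[n-5,5]$, $[n-5,1^5]$ — I would compute each $\xi_\lambda$ as an explicit rational function of $n$ and check that it exceeds $-1$ for all sufficiently large odd $n$ (with explicit thresholds, as in the even case; the small cases $9\le n\le 21$ and the intermediate odd values are handled by the \verb|Sagemath| verification mentioned in the Small cases subsection, so I only need the tail $n\ge 23$). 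For several of these, e.g. $[n-4,3,1]$ and $[n-5,5]$, some $\omega_i$ are negative, so the bound $\xi_\lambda \ge -1$ is not automatic from positivity and must be checked via the polynomial inequality — but this is exactly the same mechanical computation carried out in the even case.

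The main obstacle, as in Lemma~\ref{lem:low-dim-4-odd}, is bookkeeping rather than ideas: one must be careful that the character values in Table~\ref{tab:4-odd} are correct (they follow from the Murnaghan--Nakayama rule together with Lemma~\ref{lem:char-val-4-odd}, which bounds $|\chi^\lambda(x)|$ on the relevant classes), and that when forming the rational functions $\xi_\lambda(n)$ the numerator, after clearing the denominator $f^\lambda + (\text{the }\omega_i\text{ contributions})$, is genuinely positive for the claimed range of $n$. I expect the tightest cases to be those involving the hook-like partitions $[n-5,5]$ and $[n-5,1^5]$ and the near-rectangular $[n-4,2,1^2]$, where the leading coefficients of numerator and denominator nearly balance, so the threshold on $n$ will be largest there — but for $n\ge 23$ odd all these will check out, and combined with the preceding lemma bounding $|\xi_\lambda|<1$ for $f^\lambda>\binom{n}{5}$, this shows properties \eqref{first}, \eqref{second}, \eqref{third} hold, completing the odd case of $k=4$ and hence Theorem~\ref{thm:main-sym} and Theorem~\ref{thm:main-alt} for $k=4$.
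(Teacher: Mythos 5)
Your plan is exactly the paper's proof: reduce to the fourteen low-dimensional partitions of Table~\ref{tab:4-odd} (using that all six chosen classes are even when $n$ is odd, so $\xi_\lambda=\xi_{\lambda'}$), note that the values $\alpha$ and $-1$ for $[n]$, $[n-1,1],\dots,[n-3,1^3]$ (and in fact also $[n-4,4]$) are forced by the defining linear system, and compute the remaining eigenvalues as explicit rational functions of $n$ from the weights \eqref{eq:weight-4-odd}, checking each is $\geq -1$ for large odd $n$ with small cases left to the computer verification. This matches the paper's argument in structure and detail, so the proposal is correct and not a different route.
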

\begin{proof}
	The irreducible characters of dimension less than $\binom{n}{5}$ are given in Lemma~\ref{lem:characters-dim-4}. First, we have
	\begin{align*}
		\xi_{[1^n]} &= \xi{[n]} = \alpha \\
		\xi_{[n-1,1]} &= \xi_{[n-2,2]} = \xi_{[n-3,3]} = \xi_{[n-4,4]} = \xi_{[n-2,1^2]}= \xi_{[n-3,2,1]}= \xi_{[n-3,1^3]} = -1.
	\end{align*}
	The remaining eigenvalues are
	\begin{align*}
		\xi_{[n-4,1^4]} &= \tfrac{\beta + \delta}{\binom{n-1}{{4}}} = \frac{4 \, {\left(n^{3} - 6 \, n^{2} + 11 \, n - 6\right)}}{n^{4} - 10 \, n^{3} + 35 \, n^{2} - 50 \, n + 24} >0,\\
		\xi_{[n-4,3,1]} &= \frac{8\omega_2}{n(n-1)(n-3)(n-6)} = \frac{4 \, {\left(n^{3} - 4 \, n^{2} + 3 \, n\right)}}{n^{4} - 10 \, n^{3} + 27 \, n^{2} - 18 \, n} >0,\\
		\xi_{[n-4,2^2]} & = \frac{12\left(\omega_2 -\omega_{3} + 2\omega_{4} + \omega_{6}\right)}{n(n-1)(n-4)(n-5)} = \frac{2 \, {\left(2 \, n^{3} - 9 \, n^{2} + 7 \, n\right)}}{n^{4} - 10 \, n^{3} + 29 \, n^{2} - 20 \, n} >0
		,\\
		\xi_{[n-4,2,1^2]} &= \frac{-8\omega_5}{n(n-2)(n-3)(n-5)} = \frac{4 \, {\left(n^{3} - 5 \, n^{2} + 6 \, n\right)}}{n^{4} - 10 \, n^{3} + 31 \, n^{2} - 30 \, n} >0,\\
		\xi_{[n-5,5]} &= 0,\\
		\xi_{[n-5,1^5]} &= \frac{-(\beta +\delta)}{\binom{n-1}{5}} = -\frac{20 \, {\left(n^{3} - 6 \, n^{2} + 11 \, n - 6\right)}}{n^{5} - 15 \, n^{4} + 85 \, n^{3} - 225 \, n^{2} + 274 \, n - 120} >-1.
	\end{align*}
	Therefore, the eigenvalues of $A$ afforded by $\lambda \vdash n$ with $f^\lambda <\binom{n}{5}$ are all at least $-1$.
\end{proof}

From the previous two lemmas, the weighted adjacency matrix $A$  satisfies \eqref{first}, \eqref{second}, and \eqref{third}. We conclude from the ratio bound that Theorem~\ref{thm:main-sym} and Theorem~\ref{thm:main-alt} hold for $k=4$ and $n\geq 28$ (see \eqref{eq:weights-sign-4-odd} for the lower bound on $n$). 

\section{$5$-setwise for $\sym(n)$ and $\alt(n)$}

In this section we use a similar argument to the previous section to prove Theorem~\ref{thm:main-sym} and Theorem~\ref{thm:main-alt} for $k = 5$. First, we consider the cases when $n$ is small. Then, we distinguish the cases whether $n$ is even or odd. We define
\begin{align*}
	\begin{aligned}
		\alpha &= \binom{n}{5}-1 &\beta = n-1 \qquad\qquad & \gamma = \binom{n}{2}-n\\
		\delta &=\binom{n}{3} - \binom{n}{2} &\varepsilon = \binom{n}{4}-\binom{n}{3} \qquad &\iota = \binom{n}{5}-\binom{n}{4}\\
		\eta &= \binom{n-1}{2} &\zeta=\frac{n(n-2)(n-4)}{3} \qquad\qquad &\theta = \frac{n(n-1)(n-4)(n-5)}{12}\\
		\nu &= \frac{n(n-1)(n-3)(n-6)}{8}&\tau = \binom{n-1}{4} \qquad  &\kappa = \frac{n(n-2)(n-3)(n-5)}{8}.
	\end{aligned}
\end{align*}

\subsection{Small cases}
When $11 \leq n \leq 30$, we verified using \verb|Sagemath| that there exists a weighted adjacency matrix that satisfies \eqref{first}, \eqref{second}, and \eqref{third} (see \cite{link}). The cases when $n\geq 32$ even and $n\geq 31$ odd are considered in the next two sections.
\subsection{Even case}
Let $n\geq 32$ be even and let
\begin{align*}
	\begin{aligned}
	 	C_1 &:= C_{(n-1,1)}\qquad &C_2:= C_{(n-2,2)} \qquad &C_3 := C_{(n-3,1^3)} \\
	 	C_4 &:= C_{(n-4,4)} \qquad
	 	&C_5 := C_{(n-4,2,1^2)}\qquad
	 	&C_6 := C_{(n-6,6)}\\
	 	C_7 &:= C_{(n-6,2^3)} \qquad
	 	&C_8 := C_{(n-7,3^2,1)} \qquad
	 	&C_9 := C_{(n-8,6,1^2)} \\
	 	& \qquad &C_{10} := C_{(n-10,6,1^4)} &. 
	\end{aligned}
\end{align*}

Consider the weighted adjacency matrix of $\Gamma_{n,5}$ given by
\begin{align}
	\begin{split}
	A & = \frac{\omega_1}{|C_1|}A_{(n-1,1)} + \frac{\omega_2}{|C_2|}A_{(n-2,2)} + \frac{\omega_3}{|C_3|} A_{(n-3,1^3)} + \frac{\omega_4}{|C_4|}A_{(n-4,4)} + \frac{\omega_5}{|C_5|} A_{(n-4,2,1^2)} \\& \hspace{0.5cm} + \frac{\omega_6}{|C_6|} A_{(n-6,6)} + \frac{\omega_7
	}{|C_7|} A_{(n-6,2^3)} + \frac{\omega_8}{|C_8|} A_{(n-7,3^2,1)} + \frac{\omega_9}{|C_9|} A_{(n-8,6,1^2)}  \\& \hspace{0.5cm}+ \frac{\omega_{10}}{|C_{10}|}A_{(n-10,6,1^4)}.
	\end{split}\label{eq:weight-5-even}
\end{align}

\begin{table}[b]
	\centering
	\tiny 
	\begin{tabular}{ccccccccccc}
		\hline
		&&&&&&&&&&\\
		Representation & $(n-1,1)$ & $(n-2,2)$ & $(n-3,1^3)$ & $(n-4,4)$ & $(n-4,2,1^2)$ & $(n-6,6)$ & $(n-6,2^3)$ & $(n-7,3^2,1)$ & $(n-8,6,1^2)$ & $(n-10,6,1^4)$ \\
		&&&&&&&&&&\\
		\hline 
		$\left[n\right]$ & $1$ & $1$ & $1$ & $1$ & $1$ & $1$ & $1$ & $1$ & $1$ & $1$ \\
		$\left[n-1, 1\right]$ & $0$ & $-1$ & $2$ & $-1$ & $1$ & $-1$ & $-1$ & $0$ & $1$ & $3$ \\
		$\left[n-2, 2\right]$ & $-1$ & $1$ & $0$ & $0$ & $0$ & $0$ & $3$ & $-1$ & $-1$ & $2$ \\
		$\left[n-2, 1^2\right]$ & $0$ & $0$ & $1$ & $1$ & $-1$ & $1$ & $-2$ & $0$ & $0$ & $3$ \\
		$\left[n-3, 3\right]$ & $0$ & $-1$ & $-2$ & $0$ & $0$ & $0$ & $-3$ & $2$ & $-1$ & $-2$ \\
		$\left[n-3, 2, 1\right]$ & $1$ & $0$ & $-1$ & $0$ & $0$ & $0$ & $0$ & $-1$ & $0$ & $0$ \\
		$\left[n-3, 1^3\right]$ & $0$ & $0$ & $0$ & $-1$ & $-1$ & $-1$ & $2$ & $2$ & $0$ & $1$ \\
		$\left[n-4, 4\right]$ & $0$ & $0$ & $-1$ & $1$ & $-1$ & $0$ & $3$ & $0$ & $0$ & $-3$ \\
		$\left[n-4, 1^4\right]$ & $0$ & $0$ & $0$ & $0$ & $0$ & $1$ & $1$ & $0$ & $0$ & $0$ \\
		$\left[n-4, 3, 1\right]$ & $0$ & $1$ & $0$ & $-1$ & $1$ & $0$ & $0$ & $0$ & $1$ & $-3$ \\
		$\left[n-4, 2^2\right]$ & $0$ & $-1$ & $1$ & $0$ & $0$ & $0$ & $3$ & $0$ & $1$ & $0$ \\
		$\left[n-4, 2, 1^2\right]$ & $-1$ & $0$ & $0$ & $1$ & $1$ & $0$ & $-3$ & $-1$ & $0$ & $-1$ \\
		$\left[n-5, 5\right]$ & $0$ & $0$ & $0$ & $-1$ & $-1$ & $0$ & $-3$ & $-2$ & $0$ & $-1$ \\
		$\left[n-5, 1^5\right]$ & $0$ & $0$ & $0$ & $0$ & $0$ & $-1$ & $-1$ & $0$ & $0$ & $0$ \\
		$\left[n-5, 4, 1\right]$ & $0$ & $0$ & $1$ & $0$ & $0$ & $0$ & $0$ & $0$ & $0$ & $0$ \\
		$\left[n-5, 3, 2\right]$ & $0$ & $0$ & $2$ & $1$ & $-1$ & $0$ & $-3$ & $0$ & $0$ & $3$ \\
		$\left[n-5, 3, 1^2\right]$ & $0$ & $-1$ & $0$ & $0$ & $0$ & $0$ & $3$ & $0$ & $-1$ & $0$ \\
		$\left[n-5, 2^2, 1\right]$ & $0$ & $1$ & $0$ & $-1$ & $-1$ & $0$ & $0$ & $2$ & $-1$ & $1$ \\
		$\left[n-5, 2, 1^3\right]$ & $1$ & $0$ & $0$ & $0$ & $0$ & $0$ & $0$ & $-1$ & $0$ & $0$ \\
		$\left[n-6, 6\right]$ & $0$ & $0$ & $0$ & $0$ & $0$ & $1$ & $1$ & $1$ & $1$ & $1$ \\
		$\left[n-6, 1^6\right]$ & $0$ & $0$ & $0$ & $0$ & $0$ & $0$ & $0$ & $1$ & $-1$ & $-1$ \\
	\end{tabular}	
	\caption{Character table for the low dimensional character when $n$ is even}\label{tab:5-even}
\end{table}

We would like \eqref{first}, \eqref{second}, and \eqref{third} be satisfied. In addition, we also want the eigenvalues corresponding to the partitions $[n-2,1^2], [n-3,2,1]$, $[n-4,3,1]$, $[n-4,2^2]$, $[n-4,2,1^2]$, $[n-4,1^4]$ and $[n-3,1^3]$ be equal to $-1$. The dimension of the irreducible characters corresponding to these seven partitions are $\eta$, $\zeta,$ $\nu$, $\theta$, $\kappa$, $\tau$, and $\mu$, respectively. Therefore, a system of linear equations with thirteen equations and ten variables must be satisfied. We can reduce this system of linear equations to correspond to the following matrix equation.

\begin{align}
	\begin{bmatrix}
	1 & 1 & 1 & 1 & 1 & 1 & 1 & 1 & 1 & 1 \\
	0 & -1 & 2 & -1 & 1 & -1 & -1 & 0 & 1 & 3 \\
	-1 & 1 & 0 & 0 & 0 & 0 & 3 & -1 & -1 & 2 \\
	0 & -1 & -2 & 0 & 0 & 0 & -3 & 2 & -1 & -2 \\
	0 & 0 & -1 & 1 & -1 & 0 & 3 & 0 & 0 & -3 \\
	0 & 0 & 1 & 1 & -1 & 1 & -2 & 0 & 0 & 3 \\
	1 & 0 & -1 & 0 & 0 & 0 & 0 & -1 & 0 & 0 \\
	0 & -1 & 1 & 0 & 0 & 0 & 3 & 0 & 1 & 0 \\
	-1 & 0 & 0 & 1 & 1 & 0 & -3 & -1 & 0 & -1 \\
	0 & 0 & 0 & -1 & -1 & -1 & 2 & 2 & 0 & 1
	\end{bmatrix}
	 \begin{bmatrix}
	\omega_{1}\\
	\omega_{2}\\
	\omega_{3}\\
	\omega_{4}\\
	\omega_{5}\\
	\omega_{6}\\
	\omega_{7}\\
	\omega_{8}\\
	\omega_{9}\\
	\omega_{10}
	\end{bmatrix}=
	\begin{bmatrix}
	\alpha\\
	-\beta\\
	-\gamma\\
	-\delta\\
	-\varepsilon\\
	-\eta \\
	-\zeta\\
	-\theta\\
	-\kappa\\
	- \mu
	\end{bmatrix}\label{eq:system-5-even}
\end{align}

The unique solution to the system of linear equation in \eqref{eq:system-5-even} is
\begin{align}
	\begin{cases}
		\omega_{1} &= \frac{1}{6} \, \alpha- \beta- \frac{1}{6} \, \delta- \varepsilon+ \frac{1}{6} \, \zeta+ \theta+ \kappa+ \frac{5}{6} \, \mu\\
		\omega_{2} &= \frac{1}{8} \, \alpha+ \frac{3}{8} \, \beta+ \frac{1}{8} \, \gamma+ \frac{3}{8} \, \delta+ \frac{1}{4} \, \varepsilon+ \frac{1}{8} \, \eta+ \frac{1}{4} \, \theta- \frac{1}{4} \, \kappa- \frac{3}{8} \, \mu\\
		\omega_{3} &= -\beta- \varepsilon+ \zeta+ \theta+ \kappa+ \mu\\
		\omega_{4} &= \frac{1}{4} \, \alpha- \frac{3}{4} \, \beta- \frac{1}{4} \, \gamma- \frac{1}{4} \, \delta- \varepsilon- \frac{1}{4} \, \eta+ \frac{1}{2} \, \theta+ \frac{1}{4} \, \mu\\
		\omega_{5} &= \frac{1}{4} \, \alpha- \frac{1}{4} \, \beta- \frac{1}{4} \, \gamma- \frac{1}{4} \, \delta+ \frac{1}{4} \, \eta+ \frac{1}{4} \, \mu\\
		\omega_{6} &= -\frac{1}{24} \, \alpha+ \frac{25}{24} \, \beta+ \frac{1}{8} \, \gamma+ \frac{1}{24} \, \delta+ \frac{13}{12} \, \varepsilon- \frac{1}{24} \, \eta- \frac{11}{12} \, \theta- \frac{1}{12} \, \kappa- \frac{1}{24} \, \mu\\
		\omega_{7} &= \frac{1}{24} \, \alpha- \frac{1}{24} \, \beta- \frac{1}{8} \, \gamma- \frac{1}{24} \, \delta- \frac{1}{12} \, \varepsilon+ \frac{1}{24} \, \eta- \frac{1}{12} \, \theta+ \frac{1}{12} \, \kappa+ \frac{1}{24} \, \mu\\
		\omega_{8} &= \frac{1}{6} \, \alpha- \frac{1}{6} \, \delta+ \frac{1}{6} \, \zeta- \frac{1}{6} \, \mu\\
		\omega_{9} &= \frac{3}{2} \, \beta+ \frac{1}{2} \, \gamma+ \frac{1}{2} \, \delta+ \frac{3}{2} \, \varepsilon- \zeta- \frac{3}{2} \, \theta- \frac{3}{2} \, \kappa- \frac{3}{2} \, \mu\\
		\omega_{10} &= \frac{1}{24} \, \alpha+ \frac{1}{8} \, \beta- \frac{1}{8} \, \gamma- \frac{1}{24} \, \delta+ \frac{1}{4} \, \varepsilon- \frac{1}{8} \, \eta- \frac{1}{3} \, \zeta- \frac{1}{4} \, \theta- \frac{1}{4} \, \kappa- \frac{7}{24} \, \mu.
	\end{cases}\label{eq:weights-5-even}
\end{align}

The weights in \eqref{eq:weights-5-even} are as follows when expressed in terms of $n$.

\begin{align}
	\begin{cases}
		\omega_{1} &= \frac{1}{720} \, n^{5} + \frac{11}{72} \, n^{4} - \frac{209}{144} \, n^{3} + \frac{307}{72} \, n^{2} - \frac{119}{30} \, n \\
		\omega_{2} &= \frac{1}{960} \, n^{5} - \frac{1}{96} \, n^{4} + \frac{7}{192} \, n^{3} - \frac{5}{96} \, n^{2} + \frac{1}{40} \, n \\
		\omega_{3} &= \frac{1}{6} \, n^{4} - \frac{7}{6} \, n^{3} + \frac{7}{3} \, n^{2} - \frac{4}{3} \, n \\
		\omega_{4} &= \frac{1}{480} \, n^{5} - \frac{1}{48} \, n^{4} + \frac{7}{96} \, n^{3} - \frac{5}{48} \, n^{2} + \frac{1}{20} \, n \\
		\omega_{5} &= \frac{1}{480} \, n^{5} - \frac{1}{48} \, n^{4} + \frac{7}{96} \, n^{3} - \frac{5}{48} \, n^{2} + \frac{1}{20} \, n \\
		\omega_{6} &= -\frac{1}{2880} \, n^{5} - \frac{11}{288} \, n^{4} + \frac{233}{576} \, n^{3} - \frac{415}{288} \, n^{2} + \frac{83}{40} \, n - 1 \\
		\omega_{7} &= \frac{1}{2880} \, n^{5} - \frac{1}{288} \, n^{4} + \frac{7}{576} \, n^{3} - \frac{5}{288} \, n^{2} + \frac{1}{120} \, n \\
		\omega_{8} &= \frac{1}{720} \, n^{5} - \frac{1}{72} \, n^{4} + \frac{7}{144} \, n^{3} - \frac{5}{72} \, n^{2} + \frac{1}{30} \, n \\
		\omega_{9} &= -\frac{1}{4} \, n^{4} + 2 \, n^{3} - \frac{19}{4} \, n^{2} + 3 \, n \\
		\omega_{10} &= \frac{1}{2880} \, n^{5} - \frac{13}{288} \, n^{4} + \frac{151}{576} \, n^{3} - \frac{137}{288} \, n^{2} + \frac{31}{120} \, n.\label{eq:weight-5-even-function-of-n}
	\end{cases}
\end{align}

The weights in \eqref{eq:weight-5-even} are only monotone when $n$ is large enough. Therefore, considering the sign of the weights as in previous sections is not ideal. However, for $n\geq 30$, $|\omega_i| < \frac{1}{3}\alpha$, for all $i\in \{2,4,5,6,7,8,10\}$. Moreover, $|\omega_{i}|<\frac{10}{9}\alpha$ for $i \in \{1,3,9\}$ and $n\geq 30$.

Next, we give a lemma about the character values on the conjugacy classes used in our weighted adjacency matrix. The proof is omitted since it is similar to the proofs of Lemma~\ref{lem:char-val-4-even} and Lemma~\ref{lem:char-val-4-odd}. 
\begin{lem}
	Let $\lambda \vdash n \geq 31$. For any $x\in C_1\cup C_2 \cup C_4 \cup C_5 \cup C_6 \cup C_9$, $|\chi^\lambda(x)| \in \{0,1\}$. Moreover, for $x\in C_3 \cup C_8$, $|\chi^\lambda(x)| \in \{0,1,2\}$ and for $x \in C_7 \cup C_{10}$, $|\chi^\lambda(x)| \in \{0,1,2,3\}$.
\end{lem}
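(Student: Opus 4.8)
The plan is to bound $|\chi^\lambda(x)|$ for each conjugacy class $C_i$ appearing in \eqref{eq:weight-5-even} by invoking the Murnaghan-Nakayama rule together with the uniqueness-of-long-rim-hooks estimate from \cite[Lemma~3.4~(3)]{behajaina20203}, exactly as in the proofs of Lemma~\ref{lem:char-val-4-even} and Lemma~\ref{lem:char-val-4-odd}. First I would record the fixed parts of the cycle types involved: writing each class as $(n-a,\mu)$ with $\mu\vdash a$, the relevant values of $a$ are $a=1$ for $C_1$, $a=2$ for $C_2$, $a=3$ for $C_3$, $a=4$ for $C_4$ and $C_5$, $a=6$ for $C_6$ and $C_7$, $a=7$ for $C_8$, $a=8$ for $C_9$, and $a=10$ for $C_{10}$. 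Since $n\geq 31>3a+1$ for every such $a$ (the largest being $a=10$, where $3a+1=31$), \cite[Lemma~3.4~(3)]{behajaina20203} guarantees that in any Young diagram with $n$ cells there is at most one rim hook of length $n-a$.

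Next I would peel off that unique long rim hook. By the Murnaghan-Nakayama rule, for a class with cycle type $(n-a,\mu)$ where $\mu=(\mu_1,\dots,\mu_\ell)\vdash a$, we have
\begin{align*}
	|\chi^\lambda_{(n-a,\mu)}| \leq \max_{\nu\vdash a}|\chi^\nu_\mu|,
\end{align*}
since removing the single rim hook of length $n-a$ (with a sign) leaves a skew shape of size $a$, and the contribution is bounded by the largest character degree of $\sym(a)$ restricted to the class of cycle type $\mu$. It then remains to evaluate, for each $i$, the quantity $\max_{\nu\vdash a}|\chi^\nu_\mu|$ from the (small) character tables of $\sym(1),\dots,\sym(10)$: one checks that it equals $1$ for $\mu=(1)$, $(2)$, $(4)$ (i.e.\ $C_1,C_2,C_4$), $(2,1^2)$ ($C_5$), $(6)$ ($C_6$), and $(6,1^2)$ ($C_9$); equals $2$ for $\mu=(1^3)$ ($C_3$, attained by $\chi^{[2,1]}$) and $(3^2,1)$ ($C_8$); and equals $3$ for $\mu=(2^3)$ ($C_7$, attained by $\chi^{[2^3]}$ or $\chi^{[2,2,2]}$) and $(6,1^4)$ ($C_{10}$). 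Since the irreducible characters of $\sym(n)$ are integer-valued, $|\chi^\lambda(x)|$ lies in $\{0,1\}$, $\{0,1,2\}$, or $\{0,1,2,3\}$ accordingly, which is the claim.

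The only genuinely delicate point is the bookkeeping for $C_{10}$, where $a=10$ and the hypothesis $n\geq 31$ is exactly what is needed for the uniqueness of the long rim hook (this forces the lower bound on $n$ in the statement); one must also correctly identify $\max_{\nu\vdash 10}|\chi^\nu_{(6,1^4)}|=3$, which is a finite computation in the character table of $\sym(10)$ (for instance the partition $\nu=[4,3,2,1]$ or a nearby hook-rich shape realizes a value of absolute value $3$ on this class, while no partition exceeds it). Everything else is a routine transcription of small-symmetric-group character values, so I would simply state the bounds with a reference to the $\sym(a)$ character tables and the already-used lemma of \cite{behajaina20203}, as is done for the $k=4$ cases.
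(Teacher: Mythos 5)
Your proposal is correct and is essentially the argument the paper has in mind (the paper omits this proof, pointing to Lemma~\ref{lem:char-val-4-even} and Lemma~\ref{lem:char-val-4-odd}): uniqueness of the $(n-a)$-rim hook for $n\geq 3a+1$ (with $a=10$ forcing $n\geq 31$), the Murnaghan--Nakayama rule giving $|\chi^\lambda_{(n-a,\mu)}|\leq \max_{\nu\vdash a}|\chi^\nu_\mu|$ (note the leftover shape is an ordinary partition of $a$, not a general skew shape), and the small character tables of $\sym(a)$. One cosmetic slip in your aside on $C_{10}$: $[4,3,2,1]$ has no hook of length $6$, so $\chi^{[4,3,2,1]}_{(6,1^4)}=0$; a correct witness for the value $3$ is, e.g., $[9,1]$ (removing the $6$-hook leaves $[3,1]$), though for the lemma only the upper bound $\max_{\mu\vdash 4}f^\mu=3$ is needed, which your argument already supplies.
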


Now, we use these weights on the matrix in \eqref{eq:weight-5-even-function-of-n}. The eigenvalues of $A$ afforded by irreducible characters of dimension at least $\binom{n}{6}$ are again bounded by $1$, in absolute value.

\begin{lem}
	If $\lambda \vdash n$ such that $f^\lambda > 2\binom{n}{6}$, then $|\xi_\lambda|< 1$.
\end{lem}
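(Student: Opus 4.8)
The statement asserts that for the weighted adjacency matrix $A$ in \eqref{eq:weight-5-even-function-of-n}, every eigenvalue $\xi_\lambda$ afforded by an irreducible character with $f^\lambda>2\binom{n}{6}$ satisfies $|\xi_\lambda|<1$. The strategy is exactly the one used in the preceding sections (Lemma~\ref{lem:high-dim-4-odd} and its odd-case analogue): bound $|\xi_\lambda|$ crudely using the triangle inequality, feed in the uniform character bounds on the ten conjugacy classes, and check that the resulting rational function of $n$ is smaller than $2\binom{n}{6}$ for all $n$ in the relevant range. The key inputs are the character-value lemma just stated (so $|\chi^\lambda(x)|\le 1$ on $C_1,C_2,C_4,C_5,C_6,C_9$, $|\chi^\lambda(x)|\le 2$ on $C_3,C_8$, and $|\chi^\lambda(x)|\le 3$ on $C_7,C_{10}$) together with the explicit polynomial expressions for the weights $\omega_i$.

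First I would write $\xi_\lambda=\frac1{f^\lambda}\sum_{i=1}^{10}\omega_i\chi^\lambda(C_i)$ and apply the triangle inequality with the stated character bounds to get
\[
	|\xi_\lambda|\le\frac{1}{f^\lambda}\left(|\omega_1|+|\omega_2|+2|\omega_3|+|\omega_4|+|\omega_5|+|\omega_6|+3|\omega_7|+2|\omega_8|+|\omega_9|+3|\omega_{10}|\right).
\]
Second, I would bound each absolute value of a weight by a monomial-dominated estimate valid on the range $n\ge 32$: as already noted in the text, $|\omega_i|<\tfrac13\alpha$ for $i\in\{2,4,5,6,7,8,10\}$ and $|\omega_i|<\tfrac{10}{9}\alpha$ for $i\in\{1,3,9\}$, where $\alpha=\binom n5-1$. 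Summing the coefficients with these bounds gives a bound of the form $C\alpha/f^\lambda$ for an explicit numerical constant $C$ (here $C=\tfrac13\cdot(1+1+1+3+2+1+3)+\tfrac{10}{9}\cdot(1+2+1)=\tfrac{12}{3}+\tfrac{40}{9}=\tfrac{76}{9}$, so one may simply take $C\le 9$, say). Third, since $f^\lambda>2\binom n6$ by hypothesis, it suffices to verify $C\bigl(\binom n5-1\bigr)<2\binom n6$ for $n\ge 32$; but $2\binom n6/\binom n5=\tfrac{2(n-5)}{6}=\tfrac{n-5}{3}$, which exceeds $C$ once $n>3C+5$, and this is comfortably satisfied for $n\ge 32$. (If one wishes to keep the sharper constant $\tfrac{76}{9}$ the inequality already holds for $n\ge 33$; the small residual cases are covered by the Sagemath verification cited for $11\le n\le 30$ and, if needed, a direct check.)

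Alternatively — and this is the route I would actually present to match the style of Lemma~\ref{lem:high-dim-4-odd} — I would substitute the exact polynomial formulas \eqref{eq:weight-5-even-function-of-n} into the weighted sum $|\omega_1|+|\omega_2|+2|\omega_3|+\cdots+3|\omega_{10}|$, collect it into a single degree-$5$ polynomial $P(n)$ in $n$ (taking care of signs: for large $n$ the leading coefficients of $\omega_1,\omega_3,\omega_9,\omega_6$ and the others are all of predictable sign, so $|\omega_i|$ is a genuine polynomial on the range considered), and then display the closed form
\[
	|\xi_\lambda|\le\frac{P(n)}{2\binom n6}<1\qquad(\text{for }n\ge 32),
\]
the final inequality being a polynomial inequality of degree $6$ versus degree $5$ that holds for all $n$ past a small threshold. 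The main (and only) obstacle is bookkeeping: one must be sure the claimed bounds on $|\chi^\lambda(x)|$ are applied to the correct classes and that the signs of the leading terms of the $\omega_i$ are handled correctly when passing from $\omega_i$ to $|\omega_i|$ — there is no conceptual difficulty, and the numerical slack (degree $6$ beating degree $5$) is enormous for $n\ge 32$, so a generous constant suffices and no delicate estimate is required.
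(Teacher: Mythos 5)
Your proposal is correct and follows essentially the same route as the paper's proof: the triangle inequality combined with the character-value bounds on the ten classes and the weight bounds $|\omega_i|<\tfrac{10}{9}\alpha$ for $i\in\{1,3,9\}$ and $|\omega_i|<\tfrac{1}{3}\alpha$ otherwise, giving exactly the paper's constant $\tfrac{76}{9}\alpha$, which is then compared against $f^\lambda>2\binom{n}{6}$. The only difference is cosmetic bookkeeping of the $n$-threshold (the sharp constant already works for $n\geq 31$, which covers the range $n\geq 32$ used here), so no gap remains.
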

\begin{proof}
	Let $\lambda \vdash n$ such that $f^\lambda > 2\binom{n}{6}$. Using the fact that all the weights are at most $\frac{1}{2}\alpha$ in absolute value, we have
	\begin{align*}
		|\xi_\lambda| &\leq \frac{1}{f^\lambda}(|\omega_1|+|\omega_2|+2|\omega_3|+|\omega_4| + |\omega_5|+ |\omega_6|+ 3|\omega_7|+ 2|\omega_8| + |\omega_{9}| +3 |\omega_{10}|)\\
		&<  \frac{1}{f^\lambda}\left(4\times \frac{10}{9} + 12 \times \frac{1}{3}\right)\alpha\\
		&<\frac{76\alpha}{9\times 2\binom{n}{6}} <1, \hspace{1cm} \mbox{ for }n\geq 31.
	\end{align*}
\end{proof}

Next, we prove that the eigenvalues afforded by the irreducible characters of dimension less than $2\binom{n}{6}$ are at least $-1$.

\begin{lem}
	If $\lambda \vdash n$ such that $f^\lambda< 2\binom{n}{6}$, then $ \xi_\lambda \geq -1$.
\end{lem}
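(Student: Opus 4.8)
The plan is to enumerate the partitions $\lambda \vdash n$ with $f^\lambda < 2\binom{n}{6}$ using Lemma~\ref{lem:characters-dim-6}, which tells us that such $\lambda$ (or its transpose) lies in $S_n \cup T_n \cup U_n$. By the discussion in Section~\ref{sect:idea} (the proposition on transposes, together with the fact that the chosen conjugacy classes in $A$ consist of even permutations when $n$ is even), the eigenvalue afforded by $\lambda$ equals the eigenvalue afforded by $\lambda'$, so it suffices to check the partitions actually listed in $S_n \cup T_n \cup U_n$. For each such partition I would read off the character values on $C_1,\dots,C_{10}$ from Table~\ref{tab:5-even}, substitute the explicit weights from \eqref{eq:weight-5-even-function-of-n}, and compute $\xi_\lambda = \frac{1}{f^\lambda}\sum_{i=1}^{10}\omega_i\chi^\lambda(C_i)$.

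The computation splits into three groups. First, the partitions for which the system \eqref{eq:system-5-even} was set up to force the eigenvalue to be exactly $-1$: this is $[n-1,1]$, $[n-2,2]$, $[n-3,3]$, $[n-4,4]$, $[n-5,5]$ (giving $\alpha$ for $[n]$ and $-1$ for the $[n-i,i]$), together with $[n-2,1^2]$, $[n-3,1^3]$, $[n-3,2,1]$, $[n-4,1^4]$, $[n-4,3,1]$, $[n-4,2^2]$, $[n-4,2,1^2]$ — for all of these, $\xi_\lambda = -1$ holds by construction of the weights, so nothing needs checking beyond noting they solve the linear system. Second, the remaining partitions in $S_n \cup T_n \cup U_n$ not appearing in that list — namely $[n-5,4,1]$, $[n-5,3,2]$, $[n-5,3,1^2]$, $[n-5,2^2,1]$, $[n-5,2,1^3]$, $[n-6,6]$, $[n-6,1^6]$ — for which I would compute $\xi_\lambda$ explicitly as a rational function of $n$ (using Table~\ref{tab:5-even} and \eqref{eq:weight-5-even-function-of-n}) and verify that each exceeds $-1$ for all $n \geq 32$ even. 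Third, the transposes: these are handled automatically by the transpose-invariance of eigenvalues, so no separate work is needed.

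Concretely, for a partition $\lambda$ in the second group, the numerator $\sum_i \omega_i \chi^\lambda(C_i)$ will be a polynomial in $n$ of degree at most $5$, the denominator $f^\lambda$ is the explicit polynomial in $n$ listed in the "Dimension" column (or in the list of Greek letters at the start of the section), and I would show $\xi_\lambda + 1 = \frac{f^\lambda + \text{numerator}}{f^\lambda} > 0$ by checking that the top polynomial has positive leading coefficient and no real roots past the relevant threshold — exactly the style of the displayed computations in Lemma~\ref{lem:low-dim-4-odd}. I expect several of these (e.g. $\xi_{[n-6,1^6]}$ and $\xi_{[n-5,2,1^3]}$) to come out as small negative rationals tending to $0$, so the bound $\xi_\lambda \geq -1$ will be comfortable.

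The main obstacle is purely bookkeeping: there are on the order of a dozen partitions to evaluate, each requiring correct character values across ten conjugacy classes and substitution of ten weight polynomials, and a single sign or arithmetic slip propagates. To control this I would (i) double-check each character-value row of Table~\ref{tab:5-even} against the Murnaghan–Nakayama rule on the relevant small partitions, (ii) verify that the weights \eqref{eq:weights-5-even} genuinely satisfy \eqref{eq:system-5-even} by back-substitution, and (iii) confirm the whole computation for a few specific even values of $n$ (say $n = 32, 34, 36$) against the \verb|Sagemath| check already cited in \cite{link}. There is no conceptual difficulty beyond what was already needed for $k=3,4$; the only genuinely new subtlety is that with ten nonzero weights the linear system is more heavily overdetermined, so I would also record which equations are redundant (as was done for $k=4$) to make the uniqueness of the solution transparent.
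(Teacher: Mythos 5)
Your plan is essentially the paper's own proof: enumerate the partitions from Lemma~\ref{lem:characters-dim-6} (transposes disposed of by the even-permutation/transpose-invariance observation), note that the partitions built into the linear system give $\xi_\lambda=-1$ by construction, and compute the remaining eigenvalues as explicit rational functions of $n$ from Table~\ref{tab:5-even} and \eqref{eq:weight-5-even-function-of-n}, exactly as the paper does. Two small points to repair in execution: the partition $[n-5,1^5]\in T_n$ appears in neither of your two lists and is \emph{not} forced to $-1$ by the system, so it must be computed explicitly (it comes out as $(-\omega_{6}-\omega_{7})/\binom{n-1}{5}>0$, as in the paper); and the conditions for $[n-5,5]$, $[n-4,3,1]$ and $[n-4,1^4]$ are precisely the three equations dropped in passing from the thirteen desired conditions to \eqref{eq:system-5-even}, so ``$-1$ by construction'' for these requires the redundancy check you defer to the end of your proposal.
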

\begin{proof}
	It is enough to compute the eigenvalues corresponding to the irreducible characters in Lemma~\ref{lem:characters-dim-6}, whenever $n\geq 27$. We group the eigenvalues depending on whether the irreducible characters have dimension less than $\binom{n}{4}$, between $\binom{n}{4}$ and $\binom{n}{5}$, or between $\binom{n}{5}$ and $2\binom{n}{6}$.
	
	\noindent If $f^\lambda < \binom{n}{4}$, then
	\begin{align*}
		\xi_{[n]} &= \alpha,\\
		\xi_{[n-1,1]} &= \xi_{[n-2,2]} = \xi_{[n-3,3]} = \xi_{[n-4,4]} =\xi_{[n-2,1^2]} = \xi_{[n-3,2,1]} = \xi_{[n-4,1^4]}\xi_{[n-3,1^3]}  =-1.
	\end{align*}
	If $\binom{n}{4}<f^\lambda <\binom{n}{5}$, then 
	\begin{align*}
		\xi_{[n-4,2,1^2]} &= -1,\\
		\xi_{[n-4,2^2]} &= -1 ,\\
		\xi_{[n-4,3,1]} &= -1,\\
		\xi_{[n-5,1^5]} &= \tfrac{-\omega_{6}- \omega_7}{\binom{n-1}{5}} = \tfrac{5 \, {\left(n^{4} - 10 \, n^{3} + 35 \, n^{2} - 50 \, n + 24\right)}}{n^{5} - 15 \, n^{4} + 85 \, n^{3} - 225 \, n^{2} + 274 \, n - 120}
		 >0, \mbox{ for }n\geq 6,\\
		\xi_{[n-5,5]} &= -1.
	\end{align*}
	If $\binom{n}{5} < f^\lambda <2\binom{n}{6}$, then the corresponding eigenvalues are
	\begin{align*}
		\xi_{[n-5,4,1]} &= \tfrac{30\omega_3}{n(n-1)(n-2)(n-4)(n-8)} = \tfrac{5 \, {\left(n^{4} - 7 \, n^{3} + 14 \, n^{2} - 8 \, n\right)}}{n^{5} - 15 \, n^{4} + 70 \, n^{3} - 120 \, n^{2} + 64 \, n}
		 >-1 ,\\
		\xi_{[n-5,3,1^2]} &= \tfrac{20 \left( -\omega_2 +3\omega_7 -\omega_9 \right)}{n(n-1)(n-3)(n-4)(n-7)} = \tfrac{5 \, {\left(n^{4} - 8 \, n^{3} + 19 \, n^{2} - 12 \, n\right)}}{n^{5} - 15 \, n^{4} + 75 \, n^{3} - 145 \, n^{2} + 84 \, n} >-1,\\
		\xi_{[n-5,3,2]} &= \tfrac{24\left( 2\omega_3 + \omega_4 - \omega_5 -3\omega_7 +3\omega_{10} \right)}{n(n-1)(n-2)(n-5)(n-7)} = \tfrac{5 \, n^{4} - 38 \, n^{3} + 79 \, n^{2} - 46 \, n}{n^{5} - 15 \, n^{4} + 73 \, n^{3} - 129 \, n^{2} + 70 \, n} >-1,\\
		\xi_{[n-5,2^2,1]} &= \tfrac{24\left( \omega_2 -\omega_{4} -  \omega_5 +2\omega_8 -\omega_{9} +\omega_{10}\right)}{n(n-1)(n-3)(n-5)(n-6)} = \tfrac{5 \, n^{4} - 42 \, n^{3} + 103 \, n^{2} - 66 \, n}{n^{5} - 15 \, n^{4} + 77 \, n^{3} - 153 \, n^{2} + 90 \, n} >-1,\\
		\xi_{[n-5,2,1^3]} &= \tfrac{30 (\omega_1-\omega_{8})}{n(n-2)(n-3)(n-4)(n-6)} = \tfrac{5 \, {\left(n^{4} - 9 \, n^{3} + 26 \, n^{2} - 24 \, n\right)}}{n^{5} - 15 \, n^{4} + 80 \, n^{3} - 180 \, n^{2} + 144 \, n} >-1,\\
		\xi_{[n-6,1^6]} &= \tfrac{\omega_8 -\omega_9-\omega_{10}}{\binom{n-1}{6}} = \tfrac{3 \, {\left(n^{5} + 270 \, n^{4} - 2125 \, n^{3} + 4950 \, n^{2} - 3096 \, n\right)}}{4 \, {\left(n^{6} - 21 \, n^{5} + 175 \, n^{4} - 735 \, n^{3} + 1624 \, n^{2} - 1764 \, n + 720\right)}}>-1,\\
		\xi_{[n-6,6]} &= \tfrac{\omega_{6} + \omega_7 + \omega_8 + \omega_9 +\omega_{10}}{\binom{n}{6}-\binom{n}{5}} = \tfrac{5 \, {\left(n^{5} - 202 \, n^{4} + 1571 \, n^{3} - 3890 \, n^{2} + 3096 \, n - 576\right)}}{4 \, {\left(n^{6} - 21 \, n^{5} + 145 \, n^{4} - 435 \, n^{3} + 574 \, n^{2} - 264 \, n\right)}} >-1, \mbox{ for }n\geq 22.
	\end{align*}
This completes the proof.
\end{proof}

\subsubsection*{Summary}
In this subsection, we have proved that for $n \geq 32$ even, all eigenvalues of the weighted adjacency matrix $A$ given in \eqref{eq:weighted_adjacency_matrix_even} are in the interval $\left[ -1,\alpha \right]$. Therefore, \eqref{first}, \eqref{second}, and \eqref{third} are satisfied for $n\geq 32$ even.

\subsection{Odd case}
Now, we assume that $n\geq 31$ is odd. Let $C_1 := C_{(n)},\  C_2:=C_{(n-2,1^2)},\ C_3 := C_{(n-3,2,1)},\ C_4 := C_{(n-4,2^2)},\ C_5 := C_{(n-4,1^4)},\ C_6 := C_{(n-4,3,1)},C_7 := C_{(n-7,6,1)},\ C_8 := C_{(n-8,4^2)}$, and $C_9 := C_{(n-9,6,1^3)}$. We consider the weighted adjacency matrix 
\begin{align}
	\begin{split}
		A &= \frac{\omega_1}{|C_1|} A_{(n)} + \frac{\omega_2}{|C_2|} A_{(n-2,1^2)}+\frac{\omega_3}{|C_3|} A_{(n-3,2,1)} +\frac{\omega_4}{|C_4|} A_{(n-4,2^2)} + \frac{\omega_5}{|C_5|} A_{(n-4,1^4)}\\ &\hspace{0.5cm} +\frac{\omega_6}{|C_6|} A_{(n-4,3,1)} +\frac{\omega_7}{|C_7|}A_{(n-7,6,1)} + \frac{\omega_8}{|C_8|}A_{(n-8,4^2)}+ \frac{\omega_9}{|C_9|}A_{(n-9,6,1^3)}.
	\end{split}\label{eq:weighted-adj-5-odd}
\end{align}

Again, we would like to find the weights $(\omega_i)_{i\in [9]}$ so that \eqref{first}, \eqref{second}, and \eqref{third} are satisfied. In addition, we also would like the eigenvalues afforded by $[n-2,1^2]$, $[n-3,2,1]$, $[n-4,3,1]$, $[n-4,2^2]$, $[n-4,2,1^2]$, and $[n-4,1^4]$ to be $-1$. The dimensions of these six irreducible characters are $\eta = \binom{n-1}{2}$, $\zeta = \frac{n(n-2)(n-4)}{3}$, $\nu = \frac{n(n-1)(n-3)(n-6)}{8}$, $\theta = \frac{n(n-1)(n-4)(n-5)}{12}$, $\kappa = \frac{n(n-2)(n-3)(n-5)}{8}$, and $\tau = \binom{n-1}{4}$. Using Table~\ref{tab:5-odd}, the following system of linear equations must be satisfied.

\begin{align*}
	\begin{cases}
		\omega_{1} + \omega_{2} + \omega_{3} + \omega_{4} + \omega_{5} + \omega_{6} + \omega_{7} + \omega_{8} + \omega_{9} &= \alpha,\\
		-\omega_{1} + \omega_{2} - \omega_{4} + 3 \, \omega_{5} - \omega_{8} + 2 \, \omega_{9} &= -\beta,\\
		-\omega_{2} + 2 \, \omega_{4} + 2 \, \omega_{5} - \omega_{6} - \omega_{7} &= -\gamma,\\
		-\omega_{2} - 2 \, \omega_{4} - 2 \, \omega_{5} + \omega_{6} - 2 \, \omega_{9} &= -\delta,\\
		-\omega_{3} + \omega_{4} - 3 \, \omega_{5} + 2 \, \omega_{8} - \omega_{9} &= -\varepsilon,\\
		-\omega_{4} - \omega_{5} - \omega_{6} - 2 \, \omega_{8} &= -\iota,\\
		\omega_{1} - \omega_{3} - \omega_{4} + 3 \, \omega_{5} + \omega_{8} + \omega_{9} &= -\eta,\\
		\omega_{3} + \omega_{7} - \omega_{9}  &= -\zeta,\\
		\omega_{2} + \omega_{4} - 3 \, \omega_{5} - 2 \, \omega_{8} &= -\nu,\\
		\omega_{2} - \omega_{3} + \omega_{9} &= -\theta,\\
		-\omega_{4} - \omega_{5} - \omega_{6} - \omega_{7} + 2 \, \omega_{8} &= -\kappa,\\
		\omega_{1} - \omega_{8} &= -\tau,\\
	\end{cases}
\end{align*}

\begin{table}[H]
	\tiny
	\begin{tabular}{cccccccccc}
		\hline
		&&&&&&&&&\\
		& $(n)$ & $(n-2,1^2)$ & $(n-3,2,1)$ & $(n-4,2^2)$ & $(n-4,1^4)$ & $(n-4,3,1)$ & $(n-7,6,1)$ & $(n-8,4^2)$ & $(n-9,6,1^3)$ \\
		Representation &&&&&&&&&\\ \hline
		$\left[n\right]$ & $1$ & $1$ & $1$ & $1$ & $1$ & $1$ & $1$ & $1$ & $1$ \\
		$\left[n-1, 1\right]$ & $-1$ & $1$ & $0$ & $-1$ & $3$ & $0$ & $0$ & $-1$ & $2$ \\
		$\left[n-2, 2\right]$ & $0$ & $-1$ & $0$ & $2$ & $2$ & $-1$ & $-1$ & $0$ & $0$ \\
		$\left[n-2, 1^2\right]$ & $1$ & $0$ & $-1$ & $-1$ & $3$ & $0$ & $0$ & $1$ & $1$ \\
		$\left[n-3, 3\right]$ & $0$ & $-1$ & $0$ & $-2$ & $-2$ & $1$ & $0$ & $0$ & $-2$ \\
		$\left[n-3, 2, 1\right]$ & $0$ & $0$ & $1$ & $0$ & $0$ & $0$ & $1$ & $0$ & $-1$ \\
		$\left[n-3, 1^3\right]$ & $-1$ & $0$ & $0$ & $1$ & $1$ & $1$ & $0$ & $-1$ & $0$ \\
		$\left[n-4, 4\right]$ & $0$ & $0$ & $-1$ & $1$ & $-3$ & $0$ & $0$ & $2$ & $-1$ \\
		$\left[n-4, 1^4\right]$ & $1$ & $0$ & $0$ & $0$ & $0$ & $0$ & $0$ & $-1$ & $0$ \\
		$\left[n-4, 3, 1\right]$ & $0$ & $1$ & $0$ & $1$ & $-3$ & $0$ & $0$ & $-2$ & $0$ \\
		$\left[n-4, 2^2\right]$ & $0$ & $1$ & $-1$ & $0$ & $0$ & $0$ & $0$ & $0$ & $1$ \\
		$\left[n-4, 2, 1^2\right]$ & $0$ & $0$ & $0$ & $-1$ & $-1$ & $-1$ & $-1$ & $2$ & $0$ \\
		$\left[n-5, 5\right]$ & $0$ & $0$ & $0$ & $-1$ & $-1$ & $-1$ & $0$ & $-2$ & $0$ \\
		$\left[n-5, 1^5\right]$ & $-1$ & $0$ & $0$ & $0$ & $0$ & $0$ & $0$ & $1$ & $0$ \\
		$\left[n-5, 4, 1\right]$ & $0$ & $0$ & $1$ & $0$ & $0$ & $0$ & $0$ & $0$ & $1$ \\
		$\left[n-5, 3, 2\right]$ & $0$ & $0$ & $0$ & $-1$ & $3$ & $0$ & $0$ & $2$ & $2$ \\
		$\left[n-5, 3, 1^2\right]$ & $0$ & $-1$ & $0$ & $0$ & $0$ & $0$ & $0$ & $0$ & $0$ \\
		$\left[n-5, 2^2, 1\right]$ & $0$ & $-1$ & $0$ & $1$ & $1$ & $1$ & $0$ & $-2$ & $0$ \\
		$\left[n-5, 2, 1^3\right]$ & $0$ & $0$ & $0$ & $0$ & $0$ & $0$ & $1$ & $0$ & $0$ \\
		$\left[n-6, 6\right]$ & $0$ & $0$ & $0$ & $0$ & $0$ & $0$ & $1$ & $0$ & $1$ \\
		$\left[n-6, 1^6\right]$ & $1$ & $0$ & $0$ & $0$ & $0$ & $0$ & $-1$ & $-1$ & $-1$ \\
	\end{tabular}
	\caption{$n$ odd.}\label{tab:5-odd}
\end{table}

The above system of linear equations is overdetermined, however, it has a unique solution. We note that the sixth equation is a linear combination of the first five and that the seventh equation is a combination of the 9th, 10th, and 12th equations (note that $\nu = \tau +\theta -\eta $). The unique solution to this system of linear equations is given by

\begin{align*}
	\begin{cases}
		\omega_1 &=\frac{1}{8} \, \alpha+ \frac{7}{8} \, \beta+ \frac{3}{4} \, \varepsilon+ \frac{1}{8} \, \eta- \frac{7}{8} \, \theta- \frac{1}{8} \, \kappa\\
		\omega_2 &= \frac{1}{2} \, \alpha- \frac{1}{2} \, \beta- \gamma- \delta- \zeta- \frac{1}{2} \, \eta- \frac{3}{2} \, \theta- \frac{1}{2} \, \kappa\\
		\omega_3 &= \frac{1}{4} \, \alpha- \frac{1}{4} \, \beta- \frac{1}{4} \, \eta+ \frac{3}{4} \, \theta- \frac{1}{4} \, \kappa\\
		\omega_4 &= \frac{1}{8} \, \alpha- \frac{1}{8} \, \beta- \frac{1}{4} \, \gamma- \frac{1}{4} \, \varepsilon- \frac{1}{8} \, \eta+ \frac{1}{8} \, \theta+ \frac{1}{8} \, \kappa\\
		\omega_5 &= \frac{1}{8} \, \alpha- \frac{1}{8} \, \beta- \frac{5}{12} \, \gamma- \frac{1}{3} \, \delta+ \frac{1}{12} \, \varepsilon- \frac{1}{3} \, \zeta+ \frac{1}{24} \, \eta- \frac{13}{24} \, \theta- \frac{1}{24} \, \kappa\\
		\omega_6 &= \frac{1}{2} \, \alpha- \frac{1}{2} \, \beta- \frac{1}{3} \, \gamma- \frac{2}{3} \, \delta- \frac{1}{3} \, \varepsilon+ \frac{1}{3} \, \zeta- \frac{1}{6} \, \eta+ \frac{1}{6} \, \theta+ \frac{1}{6} \, \kappa\\
		\omega_7 &= -\frac{1}{2} \, \alpha+ \frac{1}{2} \, \beta+ \gamma+ \delta+ \frac{1}{2} \, \eta+ \frac{1}{2} \, \theta+ \frac{1}{2} \, \kappa\\
		\omega_8 &= \frac{1}{8} \, \alpha- \frac{1}{8} \, \beta- \frac{1}{4} \, \varepsilon+ \frac{1}{8} \, \eta+ \frac{1}{8} \, \theta- \frac{1}{8} \, \kappa\\
		\omega_9 &= -\frac{1}{4} \, \alpha+ \frac{1}{4} \, \beta+ \gamma+ \delta+ \zeta+ \frac{1}{4} \, \eta+ \frac{5}{4} \, \theta+ \frac{1}{4} \, \kappa\\
	\end{cases}
\end{align*}

The expressions of these weights in terms of $n$ are as follows.
\begin{align*}
	\begin{cases}
		\omega_1 &= \frac{1}{960} \, n^{5} - \frac{5}{96} \, n^{4} + \frac{29}{64} \, n^{3} - \frac{145}{96} \, n^{2} + \frac{253}{120} \, n - 1,\\
		\omega_2 &= \frac{1}{240} \, n^{5} - \frac{7}{24} \, n^{4} + \frac{103}{48} \, n^{3} - \frac{119}{24} \, n^{2} + \frac{31}{10} \, n,\\
		\omega_3 &= \frac{1}{480} \, n^{5} - \frac{1}{48} \, n^{4} + \frac{7}{96} \, n^{3} - \frac{5}{48} \, n^{2} + \frac{1}{20} \, n,\\
		\omega_4 &= \frac{1}{960} \, n^{5} - \frac{1}{96} \, n^{4} + \frac{7}{192} \, n^{3} - \frac{5}{96} \, n^{2} + \frac{1}{40} \, n,\\
		\omega_5 &= \frac{1}{960} \, n^{5} - \frac{5}{96} \, n^{4} + \frac{55}{192} \, n^{3} - \frac{49}{96} \, n^{2} + \frac{11}{40} \, n,\\
		\omega_6 &= \frac{1}{240} \, n^{5} - \frac{1}{24} \, n^{4} + \frac{7}{48} \, n^{3} - \frac{5}{24} \, n^{2} + \frac{1}{10} \, n,\\
		\omega_7 &= -\frac{1}{240} \, n^{5} + \frac{5}{24} \, n^{4} - \frac{79}{48} \, n^{3} + \frac{109}{24} \, n^{2} - \frac{41}{10} \, n,\\
		\omega_8 &= \frac{1}{960} \, n^{5} - \frac{1}{96} \, n^{4} + \frac{7}{192} \, n^{3} - \frac{5}{96} \, n^{2} + \frac{1}{40} \, n,\\
		\omega_9 &= -\frac{1}{480} \, n^{5} + \frac{3}{16} \, n^{4} - \frac{119}{96} \, n^{3} + \frac{39}{16} \, n^{2} - \frac{83}{60} \, n.
	\end{cases}
\end{align*}

Now, we present a lemma about the character values on the conjugacy classes that we chose. We omit the proof since it is similar to the proofs of Lemma~\ref{lem:char-val-4-even} and Lemma~\ref{lem:char-val-4-odd}. 
\begin{lem}
	Let $\lambda \vdash n \geq 28$. For any $x\in C_1\cup C_2 \cup C_3\cup C_6 \cup C_7$, $|\chi^\lambda(x)| \in \{0,1\}$. Moreover, for $x\in C_4 \cup C_8 \cup C_9$, $|\chi^\lambda(x)| \in \{0,1,2\}$ and for $x \in C_5$, $|\chi^\lambda(x)| \in \{0,1,2,3\}$.
\end{lem}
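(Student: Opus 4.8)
The plan is to follow verbatim the strategy of Lemma~\ref{lem:char-val-4-even} and Lemma~\ref{lem:char-val-4-odd}: each character value in question is pushed, via the Murnaghan--Nakayama rule, down to a character value of a symmetric group on at most $9$ points, where the bound can be read off a small fixed character table. The key structural input is that the long cycle in each of $C_1,\dots,C_9$ has length $n-a$ with $a\in\{0,2,3,4,7,8,9\}$, and since $n\geq 28 = 3\cdot 9 + 1\geq 3a+1$ for every such $a$, \cite[Lemma~3.4~(3)]{behajaina20203} guarantees that each Young diagram $\lambda\vdash n$ has \emph{at most one} rim hook of length $n-a$.

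Concretely, I would write the cycle type of $C_i$ as $(n-a_i,\rho_i)$ with $\rho_i\vdash a_i$, namely $(n)$, $(n-2,1^2)$, $(n-3,2,1)$, $(n-4,2^2)$, $(n-4,1^4)$, $(n-4,3,1)$, $(n-7,6,1)$, $(n-8,4^2)$, $(n-9,6,1^3)$ for $i=1,\dots,9$. By the Murnaghan--Nakayama rule (\cite[Section~4.10]{sagan2001symmetric}), for $x\in C_i$ either $\lambda$ has no rim hook of length $n-a_i$, in which case $\chi^\lambda(x)=0$, or removing the unique such rim hook gives $|\chi^\lambda(x)| = |\chi^\mu_{\rho_i}|$ for some $\mu\vdash a_i$; hence $|\chi^\lambda(x)| \leq m_i := \max_{\mu\vdash a_i}|\chi^\mu_{\rho_i}|$. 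A direct inspection of the character tables of $S_{a_i}$ then yields $m_i = 1$ for $i\in\{1,2,3,6,7\}$ (e.g.\ $\max_{\mu\vdash 4}|\chi^\mu_{(3,1)}| = 1$, and, after one more Murnaghan--Nakayama step inside $S_7$, $\max_{\mu\vdash 7}|\chi^\mu_{(6,1)}|=1$), $m_i = 2$ for $i\in\{4,8,9\}$, and $m_5 = 3$ (attained by $\chi^{[3,1]}_{(1^4)} = \chi^{[2,1^2]}_{(1^4)} = 3$). Since the irreducible characters of $\sym(n)$ are integer-valued, $|\chi^\lambda(x)|\leq m_i$ forces $\chi^\lambda(x)\in\{0,\pm1,\dots,\pm m_i\}$, which is exactly the three assertions of the lemma.

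The only step that is not completely automatic is establishing $m_8 = \max_{\mu\vdash 8}|\chi^\mu_{(4^2)}| = 2$ and $m_9 = \max_{\mu\vdash 9}|\chi^\mu_{(6,1^3)}| = 2$: here \cite[Lemma~3.4~(3)]{behajaina20203} no longer applies \emph{inside} $S_8$ or $S_9$ (it would need $8\geq 13$, resp.\ $9\geq 10$), and indeed a diagram with $8$ cells can carry two rim hooks of length $4$ (such as $[4,4]$). I would handle $m_9$ by first checking, by a finite case analysis over the partitions of $9$, that none of them admits two distinct rim hooks of length $6$, so that a second Murnaghan--Nakayama reduction gives $|\chi^\mu_{(6,1^3)}| \leq \max_{\nu\vdash 3}|\chi^\nu_{(1^3)}| = f^{[2,1]} = 2$; and $m_8$ by computing $\chi^\mu_{(4^2)}$ directly for the handful of $\mu\vdash 8$ not annihilated by the $4$-rim-hook condition (the value $2$ being reached at $[4,4]$ and nowhere exceeded). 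In keeping with the rest of the paper, these last verifications can also simply be confirmed by machine, so I expect them to be the most delicate — though still routine — part of the argument.
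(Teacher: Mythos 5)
Your proposal is correct and follows exactly the argument the paper intends: the paper omits this proof, deferring to the method of Lemma~\ref{lem:char-val-4-even} and Lemma~\ref{lem:char-val-4-odd}, namely the Murnaghan--Nakayama rule combined with \cite[Lemma~3.4~(3)]{behajaina20203} (valid since $n\geq 28\geq 3a+1$ for all $a\leq 9$) to reduce each value to $\max_{\mu\vdash a_i}|\chi^\mu_{\rho_i}|$, and your tabulated maxima $1,1,1,2,3,1,1,2,2$ are all right. Your extra care about the internal steps in $\sym(8)$ and $\sym(9)$ (where the unique-rim-hook lemma does not apply) is warranted and your resolutions are sound: no partition of $9$ has two removable rim hooks of length $6$, and $\max_{\mu\vdash 8}|\chi^\mu_{(4^2)}|=2$, attained at $[4,4]$.
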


Similar to the previous subsection, it is not hard to verify that $|\omega_i| < \frac{3}{5}\alpha, $ for any $i\in [9]$ and $n\geq 31$. Using this fact, we are able to prove the following lemma. 
\begin{lem}
	If $\lambda \vdash n$ such that $f^\lambda \geq 2\binom{n}{6}$, then $|\xi_\lambda| <1$.
\end{lem}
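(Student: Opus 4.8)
The plan is to bound $|\xi_\lambda|$ by the triangle inequality, exactly as was done in the high-dimensional lemmas for $k=4$ and for the even case of $k=5$. Writing $\xi_\lambda = \frac{1}{f^\lambda}\sum_{i=1}^{9}\omega_i\,\chi^\lambda(C_i)$, the first step is to feed in the character-value bounds of the preceding lemma: $|\chi^\lambda(x)|\le 1$ for $x\in C_1\cup C_2\cup C_3\cup C_6\cup C_7$, $|\chi^\lambda(x)|\le 2$ for $x\in C_4\cup C_8\cup C_9$, and $|\chi^\lambda(x)|\le 3$ for $x\in C_5$. This yields
\[
|\xi_\lambda|\ \le\ \frac{1}{f^\lambda}\bigl(|\omega_1|+|\omega_2|+|\omega_3|+2|\omega_4|+3|\omega_5|+|\omega_6|+|\omega_7|+2|\omega_8|+2|\omega_9|\bigr),
\]
so the coefficients on the right-hand side sum to $14$.

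Next I would invoke the bound $|\omega_i|<\tfrac{3}{5}\alpha$ for all $i\in[9]$ and $n\ge 31$, recorded just before the statement, together with $\alpha=\binom{n}{5}-1<\binom{n}{5}$. These give $|\xi_\lambda| < \tfrac{14}{f^\lambda}\cdot\tfrac{3}{5}\binom{n}{5} = \tfrac{42}{5f^\lambda}\binom{n}{5}$, and since $f^\lambda\ge 2\binom{n}{6}$ by hypothesis, this becomes $|\xi_\lambda| < \tfrac{21}{5}\cdot\tfrac{\binom{n}{5}}{\binom{n}{6}} = \tfrac{21}{5}\cdot\tfrac{6}{n-5} = \tfrac{126}{5(n-5)}$. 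The final step is to observe that $\tfrac{126}{5(n-5)}<1$ as soon as $n-5>25.2$, hence for every $n\ge 31$, which covers all odd $n\ge 31$. Alternatively, and in closer analogy with the even case, one can substitute the explicit polynomial expressions for $\omega_1,\dots,\omega_9$, form the weighted sum $\sum_i c_i|\omega_i|$ with $c_i\in\{1,2,3\}$ as above, and check that after division by $2\binom{n}{6}$ one obtains a rational function of $n$ that is $<1$ for all $n\ge 31$.

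This argument presents essentially no obstacle: it is bookkeeping plus one comparison of binomial coefficients. The only points deserving care are attaching the multiplicities $1,2,3$ to the correct conjugacy classes (so that the total coefficient is indeed $14$ and not larger) and confirming that the constant $\tfrac{126}{5}$ — or, if one prefers, the sharper polynomial bound — is genuinely beaten by $2\binom{n}{6}$ for every $n\ge 31$, not merely asymptotically; the displayed inequality makes this explicit, and the lemma follows.
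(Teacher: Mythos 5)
Your proposal is correct and follows essentially the same route as the paper: the triangle inequality with the character-value multiplicities $1,2,3$ attached to the same classes (total coefficient $14$), the bound $|\omega_i|<\tfrac{3}{5}\alpha$ for $n\geq 31$, and the comparison $\tfrac{42\alpha}{10\binom{n}{6}}<1$, which your explicit computation $\tfrac{126}{5(n-5)}<1$ for $n\geq 31$ simply makes more transparent. No gaps.
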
  
\begin{proof}
	The eigenvalue corresponding to $\lambda$ is such that 
	\begin{align*}
		|\xi_\lambda| &\leq  \frac{1}{f^\lambda} \left( |\omega_1| + |\omega_2| + |\omega_3| +2 |\omega_4| + 3|\omega_5| + |\omega_6| +|\omega_7| + 2|\omega_8|+2 |\omega_9| \right)\\
		&\leq  \frac{14}{f^\lambda} \times \frac{3\alpha}{5} \\
		&<\frac{42\alpha}{10\binom{n}{6}} <1, \hspace{1cm} \mbox{ for }n\geq 31.
	\end{align*}
	This completes the proof.
\end{proof}

Next, we show that all eigenvalues of $A$ afforded by the irreducible characters of degree less than $2\binom{n}{6}$ are at least $-1$.
\begin{lem}
	Let $\lambda \vdash n$. If $f^\lambda < 2\binom{n}{6}$, then $\xi_\lambda \geq -1$.
\end{lem}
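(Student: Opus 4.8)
The plan is to mirror the structure of the earlier "low-dimensional" lemmas in this paper: by Lemma~\ref{lem:characters-dim-6} (applicable since $n\geq 31>27$), every partition $\lambda\vdash n$ with $f^\lambda<2\binom{n}{6}$ lies in $S_n\cup T_n\cup U_n$ or is the transpose of such a partition. By the Proposition on transposes (the connection set consists of even permutations precisely when $\chi^{[n]}$ and $\chi^{[1^n]}$ agree on all chosen conjugacy classes, which one checks case by case for the nine classes $C_1,\dots,C_9$ using their cycle types for $n$ odd), the eigenvalue afforded by $\lambda'$ equals that afforded by $\lambda$, so it suffices to handle the explicit finite list of partitions in $S_n\cup T_n\cup U_n$.

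First I would record, from the construction of the weights $(\omega_i)_{i\in[9]}$, that $\xi_{[n]}=\xi_{[1^n]}=\alpha$ and that the eigenvalues afforded by $[n-2,1^2],[n-3,2,1],[n-4,3,1],[n-4,2^2],[n-4,2,1^2],[n-4,1^4]$ are all $-1$ by the very design of the linear system. Combining this with the branching/Specht-module identity $f^\lambda=f^{\lambda'}$, the values $\xi_{[n-1,1]}=\xi_{[n-2,2]}=\xi_{[n-3,3]}=\xi_{[n-4,4]}=-1$ also follow from the first rows of Table~\ref{tab:5-odd} together with the chosen weights (these are exactly conditions \eqref{second}). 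That disposes of all $\lambda$ with $f^\lambda<\binom{n}{4}$ and most of those with $f^\lambda<\binom{n}{5}$.

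Next I would treat the remaining partitions in $T_n$ and $U_n$ (and their transposes) one at a time. For each such $\lambda$, I would read off the row $(\chi^\lambda(C_1),\dots,\chi^\lambda(C_9))$ from Table~\ref{tab:5-odd}, substitute the closed-form weights $\omega_i=\omega_i(n)$, and simplify $\xi_\lambda=\frac{1}{f^\lambda}\sum_{i=1}^9\omega_i\chi^\lambda(C_i)$ to a ratio of explicit polynomials in $n$; then verify that the displayed rational function is $>-1$ for all odd $n\geq 31$ (equivalently, that $\xi_\lambda+1$ has a positive numerator and denominator for $n\geq 31$, a routine polynomial sign check). The relevant partitions are $[n-4,3,1],[n-4,2^2],[n-4,2,1^2],[n-5,5],[n-5,1^5]$ (dimensions between $\binom{n}{4}$ and $\binom{n}{5}$) and $[n-5,4,1],[n-5,3,2],[n-5,3,1^2],[n-5,2^2,1],[n-5,2,1^3],[n-6,6],[n-6,1^6]$ (dimensions between $\binom{n}{5}$ and $2\binom{n}{6}$); several of these again come out exactly $-1$ by the design of the weights, and the rest are strictly larger. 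The identities $\xi_\lambda=\xi_{\lambda'}$ then cover the transposed partitions for free.

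The main obstacle is purely computational: controlling the eigenvalues coming from the genuinely "new" characters of dimension between $\binom{n}{5}$ and $2\binom{n}{6}$, where $\xi_\lambda$ is a quotient of degree-$5$ polynomials in $n$ and one must certify the sign of a degree-$\leq 6$ polynomial for all odd $n\geq 31$; since the small cases $11\leq n\leq 30$ were already checked by \verb|Sagemath|, it is enough to verify these inequalities for $n\geq 31$, which is a finite set of polynomial positivity checks. I expect no conceptual difficulty beyond bookkeeping, and the proof ends by concluding that all eigenvalues of $A$ lie in $[-1,\alpha]$, so \eqref{first}, \eqref{second}, \eqref{third} hold, and hence (via the ratio bound and the disconnectedness argument of Section~\ref{sect:idea}) Theorem~\ref{thm:main-sym} and Theorem~\ref{thm:main-alt} hold for $k=5$.
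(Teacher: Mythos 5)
Your proposal follows essentially the same route as the paper: rely on Lemma~\ref{lem:characters-dim-6} to reduce to the explicit list $S_n\cup T_n\cup U_n$, use the fact that all nine chosen classes are even (so $\xi_\lambda=\xi_{\lambda'}$), note that \eqref{second} and the defining linear system force most low-dimensional eigenvalues to equal $-1$, and compute the remaining ones from Table~\ref{tab:5-odd} and the closed-form weights as rational functions in $n$, checking they exceed $-1$ for $n\geq 31$. One bookkeeping slip: your claim that this ``disposes of all $\lambda$ with $f^\lambda<\binom{n}{4}$'' misses $[n-3,1^3]$, whose eigenvalue is \emph{not} forced to $-1$ by the design (only $[n-2,1^2]$, $[n-3,2,1]$, $[n-4,3,1]$, $[n-4,2^2]$, $[n-4,2,1^2]$, $[n-4,1^4]$ are), so it must be computed explicitly; the paper does this and finds $\xi_{[n-3,1^3]}=\frac{-\omega_1+\omega_4+\omega_5+\omega_6-\omega_8}{\binom{n-1}{3}}>-1$. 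Adding that one computation, which your general scheme handles without difficulty, makes the argument complete and coincident with the paper's proof.
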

\begin{proof}
	Since the matrix $A$ in \eqref{eq:weighted-adj-5-odd} satisfies \eqref{first} and \eqref{second}, we have 
	\begin{align*}
		\xi_{[n]} &= \alpha\\
		\xi_{[n-1,1]}&= \xi_{[n-2,2]} = \xi_{[n-3,3]} = \xi_{[n-4,4]} = \xi_{[n-5,5]} = -1.
	\end{align*}
	If $f^\lambda < \binom{n}{4}$, then 
	\begin{align*}
		\xi_{[n-2,1^2]} &= -1 \\
		\xi_{[n-3,2,1]} &= -1\\
		\xi_{[n-3,1^3]} &= \tfrac{-\omega_1+\omega_4 +\omega_5 +\omega_6-\omega_8}{\binom{n-1}{3}}=\tfrac{n^{5} - 10 \, n^{4} - 5 \, n^{3} + 190 \, n^{2} - 416 \, n + 240}{40 \, {\left(n^{3} - 6 \, n^{2} + 11 \, n - 6\right)}}>-1, \mbox{ for }n\geq 4\\
		\xi_{[n-4,1^4]} &= -1.
	\end{align*}
	If $\binom{n}{4} < f^\lambda < \binom{n}{5}$, then 
	\begin{align*}
		\xi_{[n-4,3,1]} &= -1\\
		\xi_{[n-4,2^2]} &= -1 \\
		\xi_{[n-4,2,1^2]} &= -1\\
		\xi_{[n-5,1^5]} &= \frac{-\omega_1 +\omega_8}{\binom{n-1}{5}} =\tfrac{5 \, {\left(n^{4} - 10 \, n^{3} + 35 \, n^{2} - 50 \, n + 24\right)}}{n^{5} - 15 \, n^{4} + 85 \, n^{3} - 225 \, n^{2} + 274 \, n - 120} >0.
	\end{align*}
	If $\binom{n}{5} < f^\lambda < 2\binom{n}{6}$, then 
	\begin{align*}
		\xi_{[n-5,4,1]} &= \tfrac{30\left(\omega_{3} +\omega_{9}\right)}{n(n-1)(n-2)(n-4)(n-8)} = 
		\tfrac{5 \, {\left(n^{4} - 7 \, n^{3} + 14 \, n^{2} - 8 \, n\right)}}{n^{5} - 15 \, n^{4} + 70 \, n^{3} - 120 \, n^{2} + 64 \, n} >0 , \mbox{ for } n\geq 9\\
		\xi_{[n-5,3,2]} &= \tfrac{24\left( -\omega_{4} + 3\omega_{5} +2\omega_{8} + 2\omega_{9} \right)}{n(n-1)(n-2)(n-5)(n-7)} = \tfrac{5 \, n^{4} - 38 \, n^{3} + 79 \, n^{2} - 46 \, n}{n^{5} - 15 \, n^{4} + 73 \, n^{3} - 129 \, n^{2} + 70 \, n}		
		 >0, \mbox{ for } n\geq 8\\
		\xi_{[n-5,3,1^2]} &= \tfrac{-20\omega_2}{n(n-1)(n-3)(n-4)(n-7)} = -\tfrac{n^{5} - 70 \, n^{4} + 515 \, n^{3} - 1190 \, n^{2} + 744 \, n}{12 \, {\left(n^{5} - 15 \, n^{4} + 75 \, n^{3} - 145 \, n^{2} + 84 \, n\right)}}
		 >-1, \mbox{ for }n\geq 8\\
		\xi_{[n-5,2^2,1]} & = \tfrac{24\left( -\omega_{2} + \omega_{4} + \omega_{5} + \omega_{6} -2 \omega_{8} \right)}{n(n-1)(n-3)(n-5)(n-6)} = \tfrac{5 \, n^{4} - 42 \, n^{3} + 103 \, n^{2} - 66 \, n}{n^{5} - 15 \, n^{4} + 77 \, n^{3} - 153 \, n^{2} + 90 \, n}
		 >0 ,\mbox{ for }n\geq 7 \\
		\xi_{[n-5,2,1^3]} &= \tfrac{30\omega_{7}}{n(n-2)(n-3)(n-4)(n-6)} = -\tfrac{n^{5} - 50 \, n^{4} + 395 \, n^{3} - 1090 \, n^{2} + 984 \, n}{8 \, {\left(n^{5} - 15 \, n^{4} + 80 \, n^{3} - 180 \, n^{2} + 144 \, n\right)}}
		 >-1, \mbox{ for } n\geq 7\\
		\xi_{[n-6,6]} &= \tfrac{\omega_7 +\omega_{9}}{\binom{n}{6}-\binom{n}{5}} =  -\tfrac{3 \, {\left(3 \, n^{5} - 190 \, n^{4} + 1385 \, n^{3} - 3350 \, n^{2} + 2632 \, n\right)}}{2 \, {\left(n^{6} - 21 \, n^{5} + 145 \, n^{4} - 435 \, n^{3} + 574 \, n^{2} - 264 \, n\right)}}
		>-1, \mbox{ for } n\geq 11\\
		\xi_{[n-6,1^6]} &= \tfrac{\omega_1 - \omega_{7} - \omega_{8} - \omega_{9}}{\binom{n-1}{6}} = \tfrac{3 \, {\left(3 \, n^{5} - 210 \, n^{4} + 1585 \, n^{3} - 4050 \, n^{2} + 3632 \, n - 480\right)}}{2 \, {\left(n^{6} - 21 \, n^{5} + 175 \, n^{4} - 735 \, n^{3} + 1624 \, n^{2} - 1764 \, n + 720\right)}}
		>-1 , \mbox{ for } n\geq 20 .
	\end{align*}
	This completes the proof.
\end{proof}

\subsubsection*{Summary} In this subsection, we proved that for $n\geq 31$ odd there exists a weighted adjacency matrix of a certain spanning subgraph of $\Gamma_{n,5}$ for which \eqref{first}, \eqref{second}, and \eqref{third} hold.

\section{Applications}\label{sect:application}
In this section, we apply the result in Theorem~\ref{thm:main-sym} and Theorem~\ref{thm:main-alt} to prove some results on the intersection density of Kneser graphs. 

	Let $G\leq \sym(\Omega)$ and let $\Omega_k$ be the collection of all $k$-subsets of $\Omega$. 
It is well-known that $G$ acts on $\Omega_k$. If the latter is transitive, then we say that $G$ is 
$k$-\emph{homogeneous}. We say that $G$ is $k$\emph{-transitive} if $G$ acts transitively on the set of all pairwise distinct entries $k$-tuples of $\Omega$.  We recall the following well-known results on $k$-homogeneous subgroups.

\begin{thm}[Theorem~9.4B \cite{dixon1996permutation}]
	Suppose that $G$ is $k$-homogeneous on $\Omega$ of size $n\geq 2k$. Then $G$ is $(k-1)$-transitive. Moreover, $G$ is $k$-transitive with the exception of
	\begin{itemize}
		\item $k=2$, $\operatorname{ASL}_1(q) \leq G \leq \operatorname{A\Sigma L}_1(q)$, with $q \equiv 3 (\operatorname{mod}4)$
		\item $k = 3$ and $\psl{2}{q} \leq G \leq \operatorname{P\Sigma L}(2,q)$, where $n-1 = q \equiv 3\ (\operatorname{mod} 4)$
		\item $k = 3$, $G \in \{ \agl{1}{8},\agammal{1}{8},\agammal{1}{32}  \}$ 
		\item  $k = 4$ and $G \in \left\{ \psl{2}{8},\ \pgammal{2}{8},\ \pgammal{2}{32} \right\}$.
	\end{itemize}\label{thm:classification-homogeneous}
\end{thm}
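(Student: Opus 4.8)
The statement packages together the Livingstone--Wagner theorem and Kantor's refinement, so I would organise the argument in two stages.

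\emph{Stage 1: propagation of homogeneity and transitivity.} Write $n_{\ell}$ for the number of $G$-orbits on the $\ell$-subsets of $\Omega$. The first goal is the inequality $n_{\ell-1}\le n_{\ell}$ whenever $2\ell\le n$. I would prove this by exhibiting the ``upward'' incidence map $\partial^{*}\colon \mathbb{Q}\binom{\Omega}{\ell-1}\to\mathbb{Q}\binom{\Omega}{\ell}$, $A\mapsto\sum_{A\subset B,\,|B|=\ell}B$, which is manifestly $\mathbb{Q}G$-equivariant and, by the classical rank formula for set-inclusion matrices (Gottlieb), injective as soon as $(\ell-1)+\ell\le n$. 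Hence $\mathbb{Q}\binom{\Omega}{\ell-1}$ embeds as a $\mathbb{Q}G$-submodule of $\mathbb{Q}\binom{\Omega}{\ell}$, and comparing the dimensions of the $G$-fixed subspaces (whose bases are the orbit sums) gives $n_{\ell-1}\le n_{\ell}$. Applying this with $\ell=k$ and $n_{k}=1$ forces $n_{k-1}=1$, and iterating downwards (legitimate since $2j\le 2k\le n$ for all $j\le k$) shows that $G$ is $j$-homogeneous for every $0\le j\le k$. To upgrade $(k-1)$-homogeneity to $(k-1)$-transitivity I would induct on $t\le k-1$: the base case ($G$ $2$-homogeneous with $n\ge4$ is transitive) is a one-line orbit argument, and for the inductive step one fixes distinct points $a_{1},\dots,a_{t-1}$, uses $(t-1)$-transitivity to see that the setwise stabiliser of $\{a_{1},\dots,a_{t-1}\}$ induces $\sym(t-1)$ on it, and then uses $t$-homogeneity of $G$ applied to the $t$-sets $\{a_{1},\dots,a_{t-1}\}\cup\{x\}$ to conclude that the pointwise stabiliser $G_{a_{1},\dots,a_{t-1}}$ is transitive on $\Omega\setminus\{a_{1},\dots,a_{t-1}\}$. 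This proves the first assertion.

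\emph{Stage 2: the exceptional list.} It remains to determine which $k$-homogeneous $G$ (with $n\ge 2k$) fail to be $k$-transitive. By Stage~1 every such $G$ is $(k-1)$-transitive, hence $2$-transitive once $k\ge3$, so the classification of finite $2$-transitive permutation groups applies, while for $k=2$ the socle of $G$ is elementary abelian or simple by Burnside's dichotomy. One then runs through the $2$-transitive families and computes the number of orbits on unordered $k$-tuples. For the one-dimensional affine groups $\asl{1}{q}\le G\le\asigmal{1}{q}$, the unordered difference of a pair lies in $\pm(\text{a fixed square class})$, and this exhausts $\mathbb{F}_{q}^{\times}$ exactly when $q\equiv 3\pmod 4$, yielding the $k=2$ exception. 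For $\psl{2}{q}\le G\le\psigmal{2}{q}$ acting on the projective line one uses the cross-ratio: the transposition of two points of a projective triple is realised by $x\mapsto 1-x$, of determinant $-1$, so it swaps the two $\psl{2}{q}$-orbits on ordered triples---collapsing them to a single $G$-orbit on $3$-sets---precisely when $q\equiv3\pmod 4$, giving the $k=3$ exception. The remaining sporadic cases $\agl{1}{8}$, $\agammal{1}{8}$, $\agammal{1}{32}$ (for $k=3$) and $\psl{2}{8}$, $\pgammal{2}{8}$, $\pgammal{2}{32}$ (for $k=4$) are disposed of by a finite explicit computation, while for every other $2$-transitive group one checks that the stabiliser of $k-1$ points is still transitive on the complement, so $k$-transitivity holds and no further exceptions arise; Livingstone--Wagner already bounds $k$, so the verification terminates.

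\emph{Main obstacle.} The substantive input is the classification of finite $2$-transitive groups (equivalently, CFSG): without it one cannot exclude an unknown $2$-transitive group that is $k$-homogeneous but not $k$-transitive for some $k\ge4$. Everything else---the injectivity of the inclusion map, the homogeneity-to-transitivity induction, and the cross-ratio and quadratic-residue bookkeeping for the listed exceptions---is elementary, if somewhat intricate.
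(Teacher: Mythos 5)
The paper itself offers no proof of this statement: it is quoted as Theorem~9.4B of Dixon--Mortimer, i.e.\ the Livingstone--Wagner theorem together with Kantor's determination of the exceptions, so your proposal has to be judged on its own merits. Judged that way, Stage~1 has a genuine gap at the transitivity upgrade. Your inductive step claims, in effect, that a $(t-1)$-transitive, $t$-homogeneous group is $t$-transitive, and the only inputs it uses are $t$-homogeneity and the fact (from $(t-1)$-transitivity) that the setwise stabiliser of $\{a_1,\dots,a_{t-1}\}$ induces $\sym(t-1)$ on it. That implication is false: $\psl{2}{q}$ with $q\equiv 3 \pmod 4$ is $2$-transitive and $3$-homogeneous on $q+1\geq 8$ points but not $3$-transitive --- these are exactly the $k=3$ exceptions in the very statement being proved. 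Concretely, $t$-homogeneity gives a $g$ with $\{a_1,\dots,a_{t-1},x\}^g=\{a_1,\dots,a_{t-1},y\}$, but $g$ may shuffle the $a_i$ with $y$; to correct it into an element fixing each $a_i$ and sending $x$ to $y$ you would need the setwise stabiliser of the $t$-set to induce the full $\sym(t)$ on it, which, given $t$-homogeneity, is equivalent to the $t$-transitivity you are trying to establish --- the argument is circular, and the $\sym(t-1)$ information on the $(t-1)$-set does not bridge it. Worse, your step never invokes homogeneity at any level above $t$, so the identical reasoning at $t=k$ would show every $k$-homogeneous, $(k-1)$-transitive group is $k$-transitive, erasing the theorem's own exceptional list; this is a clear sign the step cannot be sound. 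The passage from $k$-homogeneity to $(k-1)$-transitivity is precisely the delicate half of Livingstone--Wagner and requires an argument that uses homogeneity one level above the transitivity sought (the original character-theoretic proof, or the elementary treatments reproduced in Dixon--Mortimer).

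The rest is broadly reasonable but incomplete as written. The orbit-counting half of Stage~1 (the $\mathbb{Q}G$-equivariant inclusion map plus Gottlieb's rank formula, giving $n_{\ell-1}\le n_\ell$ for $2\ell\le n$) is correct and standard. Stage~2 is a plausible modern outline --- reduce to the classified $2$-transitive groups for $k\geq 3$, and for $k=2$ first show a $2$-homogeneous non-$2$-transitive group has odd order (no element may transpose two points), which then needs Feit--Thompson rather than a ``Burnside dichotomy'' --- but as presented it is a programme of case-checking rather than a proof, and it differs from Kantor's original CFSG-free route through Zassenhaus-type classifications. The decisive defect remains the transitivity induction above.
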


\begin{thm}
	Let $G\leq \sym(\Omega)$ be $k$-transitive. If the pair $(G,\Omega)$ is not equal to $\alt(\Omega)$ with $|\Omega| \geq 5$ nor $\sym(\Omega)$ with $|\Omega|\geq 7$, then $G$ is one of the following.
	\begin{itemize}
		\item $k=5$ and $G$ is one of the Mathieu groups $\mathieu{12}$ or $\mathieu{24}$.
		\item $k =4$ and $G$ is one the Mathieu groups $\mathieu{11},\mathieu{12}, \mathieu{23}$ or $\mathieu{24}$.
		\item $k=3$ and $G$ is one of the above or $G$ is $\asl{d}{2}$, $V_{16}.\alt(7)$ (of degree $16$), $\mathieu{11}$ (of degree $12$), $\mathieu{22}, \Aut(\mathieu{22})$ or 
		\begin{align*}
		\psl{2}{q} \leq G \leq \pgammal{2}{q},
		\end{align*}
		of degree $q+1$, where $q$ is a prime power.
	\end{itemize}\label{thm:classification-transitive}
\end{thm}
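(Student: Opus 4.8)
The plan is to derive this from the classification of finite $2$-transitive permutation groups, which itself rests on the Classification of Finite Simple Groups: any $k$-transitive group with $k\geq 3$ is in particular $2$-transitive, so it appears on that list, and the problem reduces to reading off the maximal degree of transitivity of each entry. First I would invoke Burnside's theorem, that a finite $2$-transitive group $G\leq\sym(\Omega)$ has a unique minimal normal subgroup $N$, and either $N$ is elementary abelian and regular (the \emph{affine type}) or $N$ is nonabelian simple with $N\leq G\leq\Aut(N)$ and $N$ already $2$-transitive (the \emph{almost simple type}). This dichotomy organizes the whole proof.

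In the affine case $G = V\rtimes G_0$ with $V$ elementary abelian of order $p^d$ and $G_0\leq\mathrm{GL}_d(p)$ transitive on $V\setminus\{0\}$. Here $G$ is $3$-transitive if and only if the point stabilizer $G_0$ is $2$-transitive on $V\setminus\{0\}$, and a short argument on scalar orbits of a nonzero vector forces $p=2$. Feeding in the known list of subgroups of $\mathrm{GL}_d(2)$ that are transitive, respectively $2$-transitive, on the nonzero vectors, one extracts exactly $\asl{d}{2}$ (with $\mathrm{SL}_d(2)=\mathrm{GL}_d(2)$), the group $V_{16}.\alt(7)$ of degree $16$, and the small exceptional groups $\agl{1}{8},\agammal{1}{8},\agammal{1}{32}$; a direct order count of the form $|\Omega|(|\Omega|-1)(|\Omega|-2)(|\Omega|-3)\nmid|G|$ shows none is $4$-transitive, so the affine type contributes only to the $k=3$ line.

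In the almost simple case the socle $T$ of a $2$-transitive group is one of: $\alt(n)$; $\psl{d}{q}$ on projective points; $\mathrm{Sp}_{2d}(2)$; $\mathrm{PSU}_3(q)$; the Suzuki and Ree groups; $\psl{2}{11}$ of degree $11$; $\alt(7)$ of degree $15$; the Higman--Sims and third Conway groups; and the Mathieu groups. The case $T=\alt(n)$ is excluded by hypothesis. For $T=\psl{d}{q}$ one checks that a point stabilizer is $2$-transitive on the remaining points only when $d=2$, giving $\psl{2}{q}\leq G\leq\pgammal{2}{q}$ of degree $q+1$, which is $3$-transitive but never $4$-transitive since $\pgl{2}{q}$ is already sharply $3$-transitive. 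The families $\mathrm{Sp}_{2d}(2)$, $\mathrm{PSU}_3(q)$, $\mathrm{Sz}(q)$, Ree, $\psl{2}{11}$, $\alt(7)$ on $15$ points, $\mathrm{HS}$, $\mathrm{Co}_3$ are each exactly $2$-transitive, by inspecting their point stabilizers. Finally, for the Mathieu groups one uses the classical facts that $\mathieu{11}$ and $\mathieu{23}$ are $4$-transitive, $\mathieu{12}$ and $\mathieu{24}$ are $5$- but not $6$-transitive, and $\mathieu{22}$, $\Aut(\mathieu{22})$, and $\mathieu{11}$ of degree $12$ are exactly $3$-transitive. Sorting all these groups by their maximal transitivity produces precisely the three bullet lists in the statement.

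The main obstacle is that the argument is only as self-contained as the classification of $2$-transitive groups, so CFSG is unavoidable. Beyond that, the delicate points are the field and parity conditions that decide when $\psl{2}{q}\leq G\leq\pgammal{2}{q}$ is genuinely $3$-transitive rather than merely $2$-transitive, and the careful case-by-case bookkeeping needed to be certain no entry of the $2$-transitive list has been assigned the wrong degree of transitivity; this is where one must really use the detailed subgroup structure of each family (point stabilizers, Sylow data) rather than any uniform estimate.
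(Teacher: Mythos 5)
The paper does not actually prove this statement: it is recalled as the classical CFSG-based classification of multiply transitive groups, so your route --- pass to the classification of finite $2$-transitive groups via Burnside's affine/almost-simple dichotomy and read off the maximal degree of transitivity of each entry --- is precisely the standard argument that the paper implicitly relies on, and in outline it is sound.

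There is, however, one concrete error in your affine branch. The groups $\agl{1}{8}$, $\agammal{1}{8}$ and $\agammal{1}{32}$ are $3$-\emph{homogeneous} but not $3$-\emph{transitive}: their one-point stabilizers have orders $7$, $21$ and $155$ respectively, which are smaller than $7\cdot 6=42$ (degree $8$) and $31\cdot 30=930$ (degree $32$), so they cannot be $2$-transitive on the nonzero vectors --- exactly the criterion you set up yourself ($G_0$ must be $2$-transitive on $V\setminus\{0\}$). These three groups belong to the list of exceptions in Theorem~\ref{thm:classification-homogeneous} (the $k$-homogeneous classification), not here; placing them in the $k=3$ line contradicts your own criterion and yields a list different from the statement you are proving, so your closing claim that the sorting ``produces precisely the three bullet lists'' would fail as written. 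The affine contribution to $k=3$ is exactly $\asl{d}{2}=\agl{d}{2}$ and $V_{16}.\alt(7)$ of degree $16$ (with the degenerate case $p=3$, $d=1$ giving $\sym(3)$, which is excluded by hypothesis). Apart from this conflation of homogeneity with transitivity, your almost-simple bookkeeping (only $d=2$ survives for $\psl{d}{q}$, $\pgl{2}{q}$ is sharply $3$-transitive so never $4$-transitive, the stated transitivity degrees of the Mathieu groups) is correct, with the same caveat the paper's own statement carries: not every $G$ with $\psl{2}{q}\leq G\leq \pgammal{2}{q}$ is $3$-transitive; the theorem only asserts that a $3$-transitive group of this type lies in that interval.
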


Let $G$ be a transitive subgroup of automorphism of $K(n,k)$. Then, $G$ is $k$-homogeneous. Assume that $G$ is not equal to the symmetric group or the alternating group. By Theorem~\ref{thm:classification-homogeneous} and Theorem~\ref{thm:classification-transitive}, a $4$-homogeneous group is $4$-transitive or is one of $\psl{2}{8}$ (of degree $9$), $\pgammal{2}{8}$ (of degree $9$) or $\pgammal{2}{32}$ (of degree $33$). Moreover, a $5$-homogeneous group is $5$-transitive and is one of $\mathieu{12}$ (of degree $12$) or $\mathieu{24}$ (of degree $24$). Using \verb|Sagemath|, we obtained the intersection density of some of these $4$-homogeneous groups in the next table.
	
	\begin{table}[H]
		\begin{tabular}{|c|c|c|c|}
			\hline
			Groups & $\psl{2}{8}$ & $\pgammal{2}{8}$& $\mathieu{11}$ \\ \hline
			Intersection density & $2$ & $1$  & $3$ \\
			\hline 
		\end{tabular}
	\caption{Intersection density of some of the $4$-homogeneous groups.}
	\end{table}

	For the group $G = \pgammal{2}{32}$ acting on the $4$-subsets of $[33]$, the intersection density can be computed by considering the maximum cliques in the complement of the derangement graph $\Gamma_G$. The largest and least eigenvalues of $\overline{\Gamma_G}$ are $1023$ and $-33$, respectively. Using the Ratio bound, we have $\alpha(\overline{\Gamma_G})) \leq \frac{|G|}{1-\frac{1023}{-33}} = 5115.$ By the well-known clique-coclique bound \cite[Section~3.7]{godsil2016algebraic}, we have that $\alpha(\Gamma_G) = \omega(\overline{\Gamma_G}) \leq \frac{|G|}{5115} = 32$. Using a computer search, there are cliques of size $32$ in $\overline{\Gamma_G}$. Therefore, $\alpha(\Gamma_G) = 32$ and $\rho(G) =\frac{32}{4} = 8$. 
	
	We were not able to find the exact intersection density of  the four remaining $4$-homogeneous groups. Instead, we found the largest intersecting sets which are subgroups in these groups. We report these numbers in the next table. 
	\begin{table}[H]
		\begin{tabular}{|c|c|c|c|}
			\hline
			Groups &  $\mathieu{12}$& $\mathieu{23}$ & $\mathieu{24}$\\ \hline
			Size of the largest intersecting subgroups & $192$ & $1152$  & $23040$ \\
			\hline
		\end{tabular}
		\caption{Lower bound on the intersection density of the $4$-homogeneous Mathieu groups.}
	\end{table}

	For the $5$-homogeneous group $\mathieu{12}$, we computed using \verb|Sagemath| that its intersection density is equal to $3$. Due to its size and its degree, we were not able to find the intersection density of the Mathieu group $\mathieu{24}$ acting on $5$-subsets of $[23].$ By a computer search, we were able to find intersecting sets of size $5760$ in this group.

	We obtain the following theorem as a corollary of the main results of this paper.
	\begin{thm}
		Let $n$ and $k$ be two positive integers such that $n\geq 2k+1$. The intersection density of the Kneser graph $K(n,k)$, where $k\in \{4,5\}$, is as follows.
		\begin{align*}
		\rho(K(n,4))
		&=
		\begin{cases}
		1 & \mbox{ when } n \not\in \{9,11,12,23,24\}\\
		2 & \mbox{ when } n  = 9\\
		3 & \mbox{ when } n =11\\
		8 & \mbox{ when } n = 33\\
		\end{cases}
		\end{align*}
		and 
		\begin{align*}
		\rho(K(n,5))
		&=
		\begin{cases}
		1 & \mbox{ when }n \not\in \{12,24\}\\
		3  & \mbox{ when }n = 12.\\
		\end{cases}
		\end{align*}\label{thm:Kneser-graph}
	\end{thm}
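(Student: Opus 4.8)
The plan is to reduce the computation of $\rho(K(n,k))$ to a finite problem via the classification of $k$-homogeneous groups, and then settle the finitely many exceptional cases. First, for $n\geq 2k+1$ the automorphism group of $K(n,k)$ is $\sym(n)$ acting on the $k$-subsets of $[n]$ \cite[Chapter~7]{godsil2001algebraic}, so every transitive $H\leq\Aut(K(n,k))$ is a $k$-homogeneous subgroup of $\sym(n)$, and
\[
\rho(K(n,k)) \;=\; \max\bigl\{\,\rho(H) \;:\; H\leq\sym(n)\text{ is }k\text{-homogeneous on }[n]\,\bigr\}.
\]
By Theorem~\ref{thm:classification-homogeneous} and Theorem~\ref{thm:classification-transitive}, for $k\in\{4,5\}$ the only $k$-homogeneous groups besides $\sym(n)$ and $\alt(n)$ have degree in a short explicit list: for $k=4$, the groups $\psl{2}{8}$ and $\pgammal{2}{8}$ (degree $9$), $\mathieu{11}$ (degree $11$), $\mathieu{12}$ (degree $12$), $\mathieu{23}$ (degree $23$), $\mathieu{24}$ (degree $24$), and $\pgammal{2}{32}$ (degree $33$); for $k=5$, only $\mathieu{12}$ (degree $12$) and $\mathieu{24}$ (degree $24$). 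Since $\rho(\sym(n))=1$ by Theorem~\ref{thm:main-sym} and $\rho(\alt(n))=1$ by Theorem~\ref{thm:main-alt} for all admissible $n$, this immediately gives $\rho(K(n,4))=1$ for all $n\geq 9$ with $n\notin\{9,11,12,23,24,33\}$ and $\rho(K(n,5))=1$ for all $n\geq 11$ with $n\notin\{12,24\}$.

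Next I would treat the exceptional degrees one at a time, computing $\rho(H)$ for the extra $k$-homogeneous $H$ and comparing with the value $1$ contributed by $\sym(n)$ and $\alt(n)$. For $K(9,4)$: the only extra groups are $\psl{2}{8}$ and $\pgammal{2}{8}$ (there is nothing strictly in between, since $\pgammal{2}{8}/\psl{2}{8}\cong C_3$ and $\pgl{2}{8}=\psl{2}{8}$), and a \verb|Sagemath| computation gives $\rho(\psl{2}{8})=2$ and $\rho(\pgammal{2}{8})=1$, so $\rho(K(9,4))=2$. For $K(11,4)$: the only extra group is $\mathieu{11}$, with $\rho(\mathieu{11})=3$ by computer, so $\rho(K(11,4))=3$. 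For $K(33,4)$: the only extra group is $G=\pgammal{2}{32}$; since the derangement graph $\Gamma_G$ is too large for a direct coclique search, I would bound $\alpha(\overline{\Gamma_G})\leq 5115$ by the ratio bound applied to the complement (using its extreme eigenvalues $1023$ and $-33$), then the clique--coclique bound to get $\alpha(\Gamma_G)=\omega(\overline{\Gamma_G})\leq |G|/5115=32$, and finally exhibit by computer an intersecting set of size $32$; as a $4$-subset stabilizer in $G$ has order $4$, this yields $\rho(G)=8$ and hence $\rho(K(33,4))=8$. For $K(12,5)$: the only extra group is $\mathieu{12}$, with $\rho(\mathieu{12})=3$ by \verb|Sagemath|, so $\rho(K(12,5))=3$. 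For the remaining exceptional degrees --- $n\in\{12,23,24\}$ for $k=4$ and $n=24$ for $k=5$ --- only the trivial lower bound $\rho\geq 1$ (witnessed by an intersecting subgroup of size equal to the order of a point stabilizer) is currently available, so these $n$ are left out of the statement.

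The main obstacle is not the general argument, which is an assembly of the classification of $k$-homogeneous groups with Theorems~\ref{thm:main-sym} and~\ref{thm:main-alt}, but the exceptional group $\pgammal{2}{32}$ acting on the $4$-subsets of $[33]$: the associated derangement graph has roughly $1.6\times 10^{5}$ vertices, so its independence number cannot be found by brute force, and one must instead combine the ratio bound on the complement, the clique--coclique inequality, and a targeted search for a maximum intersecting set --- and in particular check that the ratio bound on $\overline{\Gamma_G}$ is attained, so that the clique--coclique estimate is tight. I also expect that with the present tools the sporadic Mathieu groups $\mathieu{12}$, $\mathieu{23}$, $\mathieu{24}$ in the relevant degrees will remain genuinely undetermined, which is exactly why the corresponding values of $n$ are absent from the final statement.
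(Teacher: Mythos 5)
Your proposal is correct and follows essentially the same route as the paper: reduce via $\Aut(K(n,k))=\sym(n)$ to the $k$-homogeneous subgroups, invoke Theorem~\ref{thm:classification-homogeneous} and Theorem~\ref{thm:classification-transitive} together with Theorem~\ref{thm:main-sym} and Theorem~\ref{thm:main-alt}, settle $\psl{2}{8}$, $\pgammal{2}{8}$, $\mathieu{11}$ and $\mathieu{12}$ by computer, handle $\pgammal{2}{32}$ via the ratio bound on $\overline{\Gamma_G}$ plus the clique--coclique bound and an explicit intersecting set of size $32$, and leave the remaining Mathieu degrees undetermined. Your observation that $n=33$ must also be excluded from the ``$\rho(K(n,4))=1$'' case is a correct reading of what the statement should say.
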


\section{Future work}
In this paper, we proved that if $G$ is the permutation group $\alt(n)$ or $\sym(n)$ in their actions on the $k$-subsets of $[n]$ with $k\in \{4,5\}$, then $\rho(G) = 1$. We applied this to obtain almost all the intersection density of the Kneser graphs $K(n,4)$ and $K(n,5)$.  To prove these results, we used a method that was considered in \cite{meagher2021erdHos} to find a nonasymptotic proof of \cite[Conjecture~1]{ellis2012setwise} (for $k = 2$). Our main tools are the representation theory of $\sym(n)$ and the weighted ratio bound. Even though we have refined the method in \cite{meagher2021erdHos} in this paper to work for the alternating group (see Section~\ref{sect:idea}), we believe that this method has reached its limit.

In Theorem~\ref{thm:Kneser-graph}, we could not find the intersection density of the groups $\mathieu{12},\mathieu{23}$, and $\mathieu{24}$ via \verb|Sagemath| due to the order of these groups. A similar situation happened to $\mathieu{23}$ acting on the $5$-subsets. We end this paper by posing some open problems.

\begin{prob}
	Find the intersection density of the groups $\mathieu{12}$ and $\mathieu{24}$, in their actions o the $4$-subsets. Find the intersection density of $\mathieu{23}$ in its action on the $4$-subsets and $5$-subsets.
\end{prob}

\begin{prob}
	Find the intersection density of the Kneser graphs $K(n,2)$, for $n\geq 5$, and $K(n,3)$ for $n\geq 7$.\label{prob:1}
\end{prob}
Since a $2$-transitive group is also $2$-homogeneous, finding the intersection density of $K(n,2)$ will require studying the intersection density of the action of the $2$-transitive groups on $2$-subsets. To find the intersection density of the graph $K(n,3)$, one needs to find the intersection density of the groups in Theorem~\ref{thm:classification-homogeneous} and Theorem~\ref{thm:classification-transitive} for $k=3$. 
Therefore, we expect Problem~\ref{prob:1} to be extremely difficult, yet interesting. A paper by Karen Meagher and the third author about the intersection density of the groups in Theorem~\ref{thm:classification-homogeneous} and Theorem~\ref{thm:classification-transitive}, for $k=3$, is currently in preparation.

\vspace*{0.5cm}
\noindent\textsc{Acknowledgement:} We are grateful to Karen Meagher for finding the intersection density of the group $\pgammal{2}{32}$ and finding the lower bound on the intersection density of the $4$-homogeneous Mathieu groups $\mathieu{12},\ \mathieu{23}$, and $\mathieu{24}$ in their actions on the $4$-subsets (and the action on the $5$-subsets for $\mathieu{24}$).

\appendix
\section{Low dimensional of irreducible characters}

\begin{landscape}
	\pagestyle{empty}
\begin{table}[ht]
	\caption{}
	\centering
	\small 
	\begin{tabular}{|c|c|c|c|c|}
		\hline
		Character $\phi$ & Degree $D$ of $\phi$ &$\binom{n}{4}-D$ &$\binom{n}{5}-D$& $2\binom{n}{6}-D$\\		
		\hline
		$[n]$& $1$  & $\frac{(n-4)(n+1)(n^2-3n+6)}{24}$ & $\frac{(n-5)(n^4-5n^3+10n^2+24)}{120}$ &$\frac{(n^6-15n^5+85n^4-225n^3+274n^2-120n-360)}{360}$\\
		\hline
		$[n-1,1]$& $n-1$ & $\frac{(n-1)(n^3-5n^2+6n-24)}{24}$  & $\frac{(n-1)(n^4-9n^3+26n^2-24n-120)}{120}$ &$\frac{(n-1)(n^5-14n^4+71n^3-154n^2+120n-360)}{360}$\\
		\hline
		$[n-2,2]$& $\frac{n(n-3)}{2}$ & $\frac{(n-3)(n^3-3n^2+2n-12)}{24}$& $\frac{(n-5)(n-3)(n^3-2n^2+4n+12)}{120}$ &$\frac{(n-3)(n^5-12n^4+49n^3-78n^2+40n-180)}{360}$\\
		\hline
		$[n-2,1^2]$& $\frac{(n-1)(n-2)}{2}$ &$\frac{(n-2)(n-1)(n^2-3n-12)}{24}$ &$\frac{(n-2)(n-1)(n^3-7n^2+12n-60)}{120}$ &$\frac{(n-2)(n-1)(n^4-12n^3+47n^2-60n-180)}{360}$\\
		\hline
		$[n-3,3]$& $\frac{n(n-1)(n-5)}{6}$ &$\frac{(n-1)n(n^2-9n+26)}{24}$ & $\frac{(n-1)n(n^3-9n^2+6n+76)}{120}$&$\frac{(n-7)(n-5)(n-1)n(n^2-2n+12)}{360}$\\
		\hline
		$[n-3,2,1]$& $\frac{n(n-2)(n-4)}{3}$ &$\frac{(n-7)(n-5)(n-2)n}{24}$ &$\frac{(n-4)(n-2)n(n^2-4n-37)}{120}$ &$\frac{(n-4)(n-2)n(n^3-9n^2+23n-135)}{360}$\\
		\hline
		$[n-3,1^3]$& $\frac{(n-1)(n-2)(n-3)}{6}$ &$\frac{(n-4)(n-3)(n-2)(n-1)}{24}$ &$\frac{(n-3)(n-2)(n-1)(n^2-4n-20)}{120}$ &$\frac{(n-3)(n-2)(n-1)(n^3-9n^2+20n-60)}{360}$\\
		\hline
		$[n-4,4]$& $\frac{n(n-1)(n-2)(n-7)}{24}$ & $\frac{(n-2)(n-1)n}{6}$ &$\frac{(n-2)(n-1)n(n^2-12n+47)}{120}$ &$\frac{(n-2)(n-1)n(n+1)(n^2-13n+45)}{360}$\\
		\hline
		$[n-4,1^4]$& $\frac{(n-1)(n-2)(n-3)(n-4)}{24}$ & $\frac{(n-3)(n-2)(n-1)}{6}$ &$\frac{(n-5)(n-4)(n-3)(n-2)(n-1)}{120}$ &$\frac{(n-4)(n-3)(n-2)(n-1)(n^2-5n-15)}{360}$\\
		\hline
		$[n-4,3,1]$& $\frac{n(n-1)(n-3)(n-6)}{8}$ &  &$\frac{(n-14)(n-7)(n-3)(n-1)n}{120}$ &$\frac{(n - 3 (n - 1) n (n^3 - 11n^2 - 7n + 230)}{360}$\\
		\hline
		$[n-4,2^2]$& $\frac{n(n-1)(n-4)(n-5)}{12}$ & &$\frac{(n-8)(n-7)(n-4)(n-1)n}{120}$ &$\frac{(n-8)(n-5)(n-4)(n-1)n(n+3)}{360}$\\
		\hline
		$[n-4,2,1^2]$&$\frac{n(n-2)(n-3)(n-5)}{8}$ & & $\frac{(n-3)(n-2)n(n^2-20n+79)}{120}$&$\frac{(n-5)(n-3)(n-2)n(n^2-5n-41)}{360}$\\
		\hline
		$[n-5,5]$&$\frac{n(n-1)(n-2)(n-3)(n-9)}{120}$ & & $\frac{(n-3)(n-2)(n-1)n}{24}$&$\frac{(n-3)(n-2)(n -1)n(n^2-12n+47)}{360}$\\
		\hline
		$[n-5,1^5]$& $\frac{(n-1)(n-2)(n-3)(n-4)(n-5)}{120}$ & &$\frac{(n-4)(n-3)(n-2)(n-1)}{24}$ &$\frac{(n-5)(n-4)(n-2)(n-1)(n-3)^2}{360}$\\
		\hline
		$[n-5,4,1]$& $\frac{n(n-1)(n-2)(n-4)(n-8)}{30}$ &  & &$\frac{(n-4)(n-2)(n-1)n(n^2-20n+111)}{360}$\\
		\hline
		$[n-5,3,2]$& $\frac{n(n-1)(n-2)(n-5)(n-7)}{24}$ &  & &$\frac{(n-13)(n-9)(n-5)(n-2)(n-1)n}{360}$\\
		\hline
		$[n-5,3,1^2]$&$\frac{n(n-1)(n-3)(n-4)(n-7)}{20}$ &  & &$\frac{(n-17)(n-8)(n-4)(n-3)(n-1)n}{360}$\\
		\hline
		$[n-5,2^2,1]$& $\frac{n(n-1)(n-3)(n-5)(n-6)}{24}$ &  & &$\frac{(n-14)(n-7)(n-5)(n-3)(n-1)n}{360}$\\
		\hline
		$[n-5,2,1^3]$& $\frac{n(n-2)(n-3)(n-4)(n-6)}{30}$ &  & &$\frac{(n-11)(n-7)(n-4)(n-3)(n-2)n}{360}$\\
		\hline
		$[n-6,6]$& $\frac{n(n-1)(n-2)(n-3)(n-4)(n-11)}{720}$ &  & &$\frac{(n-4)(n-3)(n-2)(n-1)n(n+1)}{720}$\\
		\hline
		$[n-6,1^6]$& $\frac{(n-1)(n-2)(n-3)(n-4)(n-5)(n-6)}{720}$  &  & &$\frac{(n-5)(n-4)(n-3)(n-2)(n-1)(n+6)}{720}$\\
		\hline
	\end{tabular}
	\label{tab:caseee}
\end{table}
\end{landscape}

\begin{table}[ht]
	\caption{Possibility for $\phi$}
	\centering
	\begin{tabular}{|c|c|}
		\hline
		Character $\phi \downharpoonleft_n$ & $\phi$\\		
		\hline
		$[n]$& $[n+1]$, $[n,1]$ \\
		\hline
		$[n-1,1]$& $[n,1]$, $[n-1,2]$, $[n-1,1^2]$ \\
		\hline
		$[n-2,2]$& $[n-1,2]$,$[n-2,3]$, $[n-2,2,1]$ \\
		\hline
		$[n-2,1^2]$&$[n-1,1^2]$,$[n-2,2,1]$,$[n-2,1^3]$ \\
		\hline
		$[n-3,3]$&$[n-2,3]$, $[n-3,4]$, $[n-3,3,1]$ \\
		\hline
		$[n-3,2,1]$&$[n-2,2,1]$, $[n-3,3,1]$, $[n-3,2^2]$, $[n-3,2,1^2]$ \\
		\hline
		$[n-3,1^3]$& $[n-2,1^3]$,$[n-3,2,1^2]$,$[n-3,1^4]$ \\
		\hline
		$[n-4,4]$&$[n-3,4]$,$[n-4,5]$,$[n-4,4,1]$ \\
		\hline
		$[n-4,1^4]$&$[n-3,1^4]$,$[n-4,2,1^3]$,$[n-4,1^5]$ \\
		\hline
		$[n-4,3,1]$&$[n-3,3,1]$, $[n-4,4,1]$,$[n-4,3,2]$,$[n-4,3,1^2]$ \\
		\hline
		$[n-4,2^2]$&$[n-3,2^2]$,$[n-4,3,2]$, $[n-4,2^2,1]$ \\
		\hline
		$[n-4,2,1^2]$&$[n-3,2,1^2]$, $[n-4,3,1^2]$,$[n-4,2^2,1]$,$[n-4,2,1^3]$ \\
		\hline
		$[n-5,5]$&$[n-4,5]$, $[n-5,6]$, $[n-5,5,1]$ \\
		\hline
		$[n-5,1^5]$&$[n-4,1^5]$, $[n-5,2,1^4]$, $[n-5,1^6]$ \\
		\hline
		$[n-5,4,1]$&$[n-4,4,1]$,$[n-5,5,1]$,$[n-5,4,2]$,$[n-5,4,1^2]$\\
		\hline
		$[n-5,3,2]$&$[n-4,3,2]$,$[n-5,4,2]$,$[n-5,3^2]$,$[n-5,3,2,1]$ \\
		\hline
		$[n-5,3,1^2]$&$[n-4,3,1^2]$,$[n-5,4,1^2]$,$[n-5,3,2,1]$,$[n-5,3,1^3]$ \\
		\hline
		$[n-5,2^2,1]$&$[n-4,2^2,1]$,$[n-5,3,2,1]$,$[n-5,2^3]$,$[n-5,2^2,1^2]$ \\
		\hline
		$[n-5,2,1^3]$&$[n-4,2,1^3]$, $[n-5,3,1^3]$, $[n-5,2^2,1^2]$, $[n-5,2,1^4]$ \\
		\hline
		$[n-6,6]$&$[n-5,6]$, $[n-6,7]$,$[n-6,6,1]$ \\
		\hline
		$[n-6,1^6]$& $[n-5,1^6]$,$[n-6,2,1^5]$, $[n-6,1^7]$ \\
		\hline
	\end{tabular}
	\label{tab:poss}
\end{table}
\vskip 1cm
\noindent
\begin{table}[ht]
	\caption{Exception for $k=4$ and $n \geq 15$}
	\centering
	\begin{tabular}{|c|c|c|}
		\hline
		Character & Degree $D$& $D-\binom{n}{4}$\\
		\hline 
		$[n-3,3,1]$ & $\frac{(n+1)n(n-2)(n-5)}{8}$&$\frac{(n-7)(n-2)n(n+1)}{12}>0$\\
		\hline
		$[n-3,2^2]$&$\frac{(n+1)n(n-3)(n-4)}{12}$&$\frac{n(n+1)(n^2-11n+22)}{24}>0$\\
		\hline
		$[n-3,2,1^2]$&$\frac{(n+1)(n-1)(n-2)(n-4)}{8}$&$\frac{(n-6)(n-2)(n-1)(n+1)}{12}>0$\\
		\hline
		$[n-4,5]$&$\frac{(n+1)n(n-1)(n-2)(n-8)}{120}$&$\frac{(n-13)(n-2)(n-1)n(n+1)}{120}>0$\\
		\hline
		$[n-4,4,1]$&$\frac{(n+1)n(n-1)(n-3)(n-7)}{30}$&$\frac{(n-1)n(n+1)(n^2-45n/4+47/2)}{30}>0$\\
		\hline
		$[n-4,2,1^3]$&$\frac{(n+1)(n-1)(n-2)(n-3)(n-5)}{30}$&$\frac{(n-2)(n-1)(n+1)(n^2-37n/4+15)}{30}>0$\\
		\hline
		$[n-4,1^5]$&$\frac{n(n-1)(n-2)(n-3)(n-4)}{120}$&$\frac{(n-2)(n-1)n(n^2-12n+7)}{120}>0$\\
		\hline
	\end{tabular}
	\label{tab:k4}
\end{table}
\vskip 1cm
\noindent
\begin{table}[ht]
	\caption{Exception for $k=5$ and $n \geq 19$}
	\centering
	\begin{tabular}{|c|c|c|}
		\hline
		Character & Degree $D$& $D-\binom{n}{5}$\\
		\hline
		$[n-4,4,1]$&$\frac{(n+1)n(n-1)(n-3)(n-7)}{30}$&$\frac{(n-26/3)(n-3)(n-1)n(n+1)}{40}>0$\\
		\hline
		$[n-4,2,1^3]$&$\frac{(n+1)(n-1)(n-2)(n-3)(n-5)}{30}$&$\frac{(n-20/3)(n-3)(n-2)(n-1)(n+1)}{40}>0$\\
		\hline
		$[n-4,3,2]$&$\frac{(n+1)n(n-1)(n-4)(n-6)}{24}$&$\frac{(n-1)n(n+1)(n^2-45n/4+57/2)}{30}>0$\\
		\hline
		$[n-4,3,1^2]$&$\frac{(n+1)n(n-2)(n-3)(n-6)}{20}$&$\frac{(n-7)(n-3)(n-2)n(n+1)}{24}>0$\\
		\hline
		$[n-4,2^2,1]$&$\frac{(n+1)n(n-2)(n-4)(n-5)}{24}$&$\frac{(n-2)n(n+1)(n^2-41n/4+97/4)}{30}>0$\\
		\hline
		$[n-5,6]$&$\frac{(n+1)n(n-1)(n-2)(n-3)(n-10)}{720}$&$\frac{(n-16)(n-3)(n-2)(n-1)n(n+1)}{720}>0$\\
		\hline
		$[n-5,5,1]$&$\frac{(n+1)n(n-1)(n-2)(n-4)(n-9)}{144}$&$\frac{(n-2)(n-1)n(n+1)(n^2-71n/5+198/5)}{144}>0$\\
		\hline
		$[n-5,2,1^4]$&$\frac{(n+1)(n-1)(n-2)(n-3)(n-4)(n-6)}{144}$&$\frac{(n-3)(n-2)(n-1)(n+1)(n^2-56n/5+24)}{144}>0$\\
		\hline
		$[n-5,1^6]$&$\frac{n(n-1)(n-2)(n-3)(n-4)(n-5)}{720}$&$\frac{(n-14)(n-3)(n-2)n(n-1)^2}{720}>0$\\
		\hline
	\end{tabular}
	\label{tab:k5}
\end{table}
\vskip 1cm
\noindent
\begin{table}[ht]
	\caption{Exception for $k=6$ and $n \geq 27$}
	\centering
	\begin{tabular}{|c|c|c|}
		\hline
		Character & Degree $D$ & $D-2\binom{n}{6}$ \\
		\hline
		$[n-5,5,1]$&$\frac{(n+1)n(n-1)(n-2)(n-4)(n-9)}{144}$&$\frac{(n-13)(n-4)(n-2)(n-1)n(n+1)}{240}>0$\\
		\hline
		$[n-5,2,1^4]$&$\frac{(n+1)(n-1)(n-2)(n-3)(n-4)(n-6)}{144}$&$\frac{(n-10)(n-4)(n-3)(n-2)(n-1)(n+1)}{240}>0$\\
		\hline
		$[n-5,4,2]$&$\frac{(n+1)n(n-1)(n-2)(n-5)(n-8)}{80}$&$\frac{7(n-2)(n-1)n(n+1)(n^2-103n/7+48)}{720}>0$\\
		\hline
		$[n-5,4,1^2]$&$\frac{(n+1)n(n-1)(n-3)(n-4)(n-8)}{72}$&$\frac{(n-19/2)(n-4)(n-3)(n-1)n(n+1)}{90}>0$\\
		\hline
		$[n-5,3^2]$&$\frac{(n+1)n(n-1)(n-2)(n-6)(n-7)}{144}$&$\frac{(n-2)(n-1)n(n+1)(n^2-17n+62)}{240}>0$\\
		\hline
		$[n-5,3,2,1]$&$\frac{(n+1)n(n-1)(n-3)(n-5)(n-7)}{45}$&$\frac{7(n-8)(n-34/7)(n-3)(n-1)n(n+1)}{360}>0$\\
		\hline
		$[n-5,3,1^3]$&$\frac{(n+1)n(n-2)(n-3)(n-4)(n-7)}{72}$&$\frac{(n-17/2)(n-4)(n-3)(n-2)n(n+1)}{90}>0$\\
		\hline
		$[n-5,2^3]$&$\frac{(n+1)n(n-1)(n-4)(n-5)(n-6)}{144}$&$\frac{(n-4)(n-1)n(n+1)(n^2-15n+46)}{240}>0$\\
		\hline
		$[n-5,2^2,1^2]$&$\frac{(n+1)n(n-2)(n-3)(n-5)(n-6)}{80}$&$\frac{7(n-3)(n-2)n(n+1)(n^2-89n/7+262/7)}{720}>0$\\
		\hline
		$[n-6,7]$&$\frac{(n+1)n(n-1)(n-2)(n-3)(n-4)(n-12)}{5040}$&$\frac{(n-26)(n-4)(n-3)(n-2)(n-1)n(n+1)}{5040}>0$\\
		\hline
		$[n-6,6,1]$&$\frac{(n+1)n(n-1)(n-2)(n-3)(n-5)(n-11)}{840}$&$\frac{(n-3)(n-2)(n-1)n(n+1)(n^2-55n/3+193/3)}{840}>0$\\
		\hline
		$[n-6,2,1^5]$&$\frac{(n+1)(n-1)(n-2)(n-3)(n-4)(n-5)(n-7)}{840}$&$\frac{(n-4)(n-3)(n-2)(n-1)(n+1)(n^2-43n/3+35)}{840}>0$\\
		\hline
		$[n-6,1^7]$&$\frac{n(n-1)(n-2)(n-3)(n-4)(n-5)(n-6)}{5040}$&$\frac{(n-4)(n-3)(n-2)(n-1)n(n^2-25n+16)}{5040}>0$\\
		\hline
	\end{tabular}
	\label{tab:k6}
\end{table}


\begin{thebibliography}{10}
	
	\bibitem{ahmadi2014new}
	B.~Ahmadi and K.~Meagher.
	\newblock A new proof for the {E}rd{\H{o}}s--{K}o--{R}ado theorem for the
	alternating group.
	\newblock {\em Discrete Mathematics}, {\bf 324}:28--40, 2014.
	
	\bibitem{ahmadi2015erdHos}
	B.~Ahmadi and K.~Meagher.
	\newblock The {E}rd{\H{o}}s-{K}o-{R}ado property for some 2-transitive groups.
	\newblock {\em Annals of Combinatorics}, {\bf 19}(4):621--640, 2015.
	
	\bibitem{meagher2015erdos}
	B.~Ahmadi and K.~Meagher.
	\newblock The {E}rd{\H{o}}s-{K}o-{R}ado property for some permutation groups.
	\newblock {\em Australasian Journal of Combinatorics}, 61(1):23--41, 2015.
	
	\bibitem{behajaina20203}
	A.~Behajaina, R.~Maleki, A.~T. Rasoamanana, and A.~S. Razafimahatratra.
	\newblock 3-setwise intersecting families of the symmetric group.
	\newblock {\em Discrete Mathematics}, {\bf 344}(8):112467, 2021.
	
	\bibitem{cameron2003intersecting}
	P.~J. Cameron and C.~Y. Ku.
	\newblock Intersecting families of permutations.
	\newblock {\em European Journal of Combinatorics}, {\bf 24}(7):881--890, 2003.
	
	\bibitem{dixon1996permutation}
	J.~D. Dixon and B.~Mortimer.
	\newblock {\em Permutation {G}roups}, volume~{\bf 163}.
	\newblock Springer Science \& Business Media, 1996.
	
	\bibitem{ellis2012setwise}
	D.~Ellis.
	\newblock Setwise intersecting families of permutations.
	\newblock {\em Journal of Combinatorial Theory, Series A}, {\bf
		119}(4):825--849, 2012.
	
	\bibitem{ellis2011intersecting}
	D.~Ellis, E.~Friedgut, and H.~Pilpel.
	\newblock Intersecting families of permutations.
	\newblock {\em Journal of the American Mathematical Society}, {\bf
		24}(3):649--682, 2011.
	
	\bibitem{erdos1961intersection}
	P.~Erd\H{o}s, C.~Ko, and R.~Rado.
	\newblock Intersection theorems for systems of finite sets.
	\newblock {\em The Quarterly Journal of Mathematics}, {\bf 12}(1):313--320,
	1961.
	
	\bibitem{godsil2009new}
	C.~Godsil and K.~Meagher.
	\newblock A new proof of the {E}rd{\H{o}}s--{K}o--{R}ado theorem for
	intersecting families of permutations.
	\newblock {\em European Journal of Combinatorics}, {\bf 30}(2):404--414, 2009.
	
	\bibitem{godsil2010multiplicity}
	C.~Godsil and K.~Meagher.
	\newblock Multiplicity-free permutation representations of the symmetric group.
	\newblock {\em Annals of Combinatorics}, {\bf 13}(4):463--490, 2010.
	
	\bibitem{godsil2016algebraic}
	C.~Godsil and K.~Meagher.
	\newblock An algebraic proof of the {E}rd{\H{o}}s-{K}o-{R}ado theorem for
	intersecting families of perfect matchings.
	\newblock {\em Ars Mathematica Contemporanea}, {\bf 12}(2):205--217, 2016.
	
	\bibitem{godsil2016erdos}
	C.~Godsil and K.~Meagher.
	\newblock {\em {E}rd\H{o}s-{K}o-{R}ado {T}heorems: {A}lgebraic {A}pproaches}.
	\newblock Cambridge University Press, 2016.
	
	\bibitem{godsil2001algebraic}
	C.~Godsil and G.~F. Royle.
	\newblock {\em Algebraic graph theory}, volume~{\bf 207}.
	\newblock Springer Science \& Business Media, 2001.
	
	\bibitem{graham1997handbook}
	R.~L. Graham, M.~Grotschel, L.~Lov{\'a}sz, and H.~S. Wilf.
	\newblock Handbook of combinatorics.
	\newblock {\em Mathematical Intelligencer}, {\bf 19}(2):65, 1997.
	
	\bibitem{haemers2021hoffman}
	W.~H. Haemers.
	\newblock {H}offman's ratio bound.
	\newblock {\em Linear Algebra and its Applications}, 2021.
	
	\bibitem{hujdurovic2022intersection}
	A.~Hujdurovi{\'c}, K.~Kutnar, B.~Kuzma, D.~Maru{\v{s}}i{\v{c}},
	{\v{S}}.~Miklavi{\v{c}}, and M.~Orel.
	\newblock On intersection density of transitive groups of degree a product of
	two odd primes.
	\newblock {\em Finite Fields and Their Applications}, {\bf 78}:101975, 2022.
	
	\bibitem{2021arXiv210803943H}
	A.~{Hujdurovi{\'c}}, K.~{Kutnar}, D.~{Maru{\v{s}}i{\v{c}}}, and
	{\v{S}}.~{Miklavi{\v{c}}}.
	\newblock On maximum intersecting sets in direct and wreath product of groups.
	\newblock {\em arXiv e-prints}, 2021.
	
	\bibitem{hujdurovic2021intersection}
	A.~Hujdurović, D.~Marušič, Š. Miklavič, and K.~Kutnar.
	\newblock Intersection density of transitive groups of certain degrees.
	\newblock {\em arXiv preprint arXiv:2104.04699}, 2021.
	
	\bibitem{ku2016largest}
	C.~Y. Ku, T.~Lau, and K.~B. Wong.
	\newblock Largest independent sets of certain regular subgraphs of the
	derangement graph.
	\newblock {\em Journal of Algebraic Combinatorics}, {\bf 44}(1):81--98, 2016.
	
	\bibitem{larose2004stable}
	B.~Larose and C.~Malvenuto.
	\newblock {S}table sets of maximal size in {K}neser-type graphs.
	\newblock {\em European Journal of Combinatorics}, {\bf 25}(5):657--673, 2004.
	
	\bibitem{li2020erd}
	C.~H. Li, S.~J. Song, and V.~Pantangi.
	\newblock {E}rd{\H{o}}s-{K}o-{R}ado problems for permutation groups.
	\newblock {\em arXiv preprint arXiv:2006.10339}, 2020.
	
	\bibitem{meagher2019erdHos}
	K.~Meagher.
	\newblock An {E}rd{\H{o}}s-{K}o-{R}ado theorem for the group $\mathrm{PSU}(3,
	q) $.
	\newblock {\em Designs, Codes and Cryptography}, {\bf 87}(4):717--744, 2019.
	
	\bibitem{meagher2021erdHos}
	K.~Meagher and A.~S. Razafimahatratra.
	\newblock The {E}rd{\H{o}}s-{K}o-{R}ado {T}heorem for 2-pointwise and 2-setwise
	intersecting permutations.
	\newblock {\em The Electronic Journal of Combinatorics}, {\bf 28}(4):P4--10,
	2021.
	
	\bibitem{meagher180triangles}
	K.~Meagher, A.~S. Razafimahatratra, and P.~Spiga.
	\newblock On triangles in derangement graphs.
	\newblock {\em Journal of Combinatorial Theory, Series A}, {\bf 180}:105390,
	2021.
	
	\bibitem{meagher2011erdHos}
	K.~Meagher and P.~Spiga.
	\newblock An {E}rd{\H{o}}s--{K}o--{R}ado theorem for the derangement graph of
	{PGL}(2, q) acting on the projective line.
	\newblock {\em Journal of Combinatorial Theory, Series A}, {\bf
		118}(2):532--544, 2011.
	
	\bibitem{meagher2014erdos}
	K.~Meagher and P.~Spiga.
	\newblock An {E}rd{\H{o}}s--{K}o--{R}ado {T}heorem for the derangement graph of
	${PGL}_3(q)$ acting on the projective plane.
	\newblock {\em SIAM Journal on Discrete Mathematics}, {\bf 28}(2):918--941,
	2014.
	
	\bibitem{meagher2016erdHos}
	K.~Meagher, P.~Spiga, and P.~H. Tiep.
	\newblock An {E}rd{\H{o}}s--{K}o--{R}ado theorem for finite 2-transitive
	groups.
	\newblock {\em European Journal of Combinatorics}, {\bf 55}:100--118, 2016.
	
	\bibitem{AMC2554}
	A.~S. Razafimahatratra.
	\newblock On complete multipartite derangement graphs.
	\newblock {\em Ars Mathematica Contemporanea}, {\bf 21}(1):1--07, 2021.
	
	\bibitem{razafimahatratra2021intersection}
	A.~S. Razafimahatratra.
	\newblock On the intersection density of primitive groups of degree a product
	of two odd primes.
	\newblock {\em arXiv preprint arXiv:2109.05392}, 2021.
	
	\bibitem{link}
	A.~S. Razafimahatratra.
	\newblock {\em Intersection density of Kneser graphs}, 2022.
	\newblock
	\url{https://gitlab.com/sarobidy19/intersection-density-of-kneser-graphs}.
	
	\bibitem{sagan2001symmetric}
	B.~E. Sagan.
	\newblock {\em The Symmetric Group: Representations, Combinatorial Algorithms,
		and Symmetric Functions (Graduate Texts in Mathematics)}.
	\newblock New York: Springer, 2001.
	
	\bibitem{spiga2019erdHos}
	P.~Spiga.
	\newblock The {E}rd{\H{o}}s-{K}o-{R}ado theorem for the derangement graph of
	the projective general linear group acting on the projective space.
	\newblock {\em Journal of Combinatorial Theory, Series A}, {\bf 166}:59--90,
	2019.
	
	\bibitem{sagemath}
	{The Sage Developers}.
	\newblock {\em {S}ageMath, the {S}age {M}athematics {S}oftware {S}ystem
		({V}ersion 8.9)}, 2020.
	\newblock {\tt https://www.sagemath.org}.
	
\end{thebibliography}
\end{document}